\newcommand{\assign}{:=}
\newcommand{\tmem}[1]{{\em #1\/}}
\newcommand{\tmmathbf}[1]{\ensuremath{\boldsymbol{#1}}}
\newcommand{\tmop}[1]{\ensuremath{\operatorname{#1}}}
\newcommand{\tmstrong}[1]{\textbf{#1}}
\newenvironment{proof}{\noindent\textbf{Proof\ }}{\hspace*{\fill}$\Box$\medskip}
\newtheorem{proposition}{Proposition}
\newtheorem{remark}{Remark}
\newtheorem{claim}{Claim}
\title{The Onsager principle and {structure} preserving numerical schemes\thanks{The work of Huangxin Chen was partially supported by National Key Research and Development Project of China (Grant No. 2023YFA1011702) and National Natural Science Foundation of China (Grant No. {12471345 and No. 12122115}). The work of Xianmin Xu was partially supported by National Natural Science Foundation of China (Grant No. 11971469 and No. 12371415) and by Beijing Natural Science Foundation(Grant No. Z240001).}}
\author{Huangxin Chen\thanks{School of Mathematical Sciences and Fujian Provincial Key Laboratory on Mathematical Modeling and High Performance Scientific Computing, Xiamen University, Fujian, 361005, China (chx@xmu.edu.cn).}
\and
Hailiang Liu\thanks{Iowa State University, Department of Mathematics, Ames, IA 50011, United States (hliu@iastate.edu)}
\and
Xianmin Xu\thanks{Corresponding author. LSEC, ICMSEC, NCMIS, Academy of Mathematics and Systems Science, Chinese Academy of Sciences, Beijing 100190, China (xmxu@lsec.cc.ac.cn)}}
\date{}
\begin{document}

\maketitle

\begin{abstract}
We present a natural framework for constructing energy-stable time discretization schemes. By leveraging the Onsager principle,
we demonstrate its efficacy in formulating  partial differential equation models for diverse gradient flow systems. Furthermore,  this principle provides a robust basis for developing numerical schemes that uphold crucial physical properties. Within this framework, several widely used schemes emerge naturally, showing its versatility and applicability. 
\end{abstract}

% REQUIRED
{\bf\indent Key words.} Onsager principle, dissipative physical systems, {structure} preserving, optimization

% REQUIRED
{\bf\indent MSC codes.} 65M12, 65M22, 76M30
%\tableofcontents

\section{Introduction}

Physical systems inherently exhibit critical properties such as energy conservation/dissipation relations, mass conservation, and positive densities. Designing numerical schemes that preserve fundamental characteristics is crucial for accurately modeling  and simulating complex systems. {The long tradition of developing structure-preserving numerical methods within the numerical analysis community reflects the importance of maintaining these essential properties in simulation techniques. 
For conservative dynamical systems, a variety of numerical methods have been developed, including symplectic integrators \cite{vogelaere1956methods,ruth1983canonical,feng1984difference} and varitional integrators \cite{marsden2001discrete}, each based on different (but equivalent) formulations of the system. 
Moreover, structure preserving schemes have been extended to encompass more general Lie-Poisson systems \cite{engo2001numerical,celledoni2014introduction}. For a more comprehensive introduction to these methods, see  \cite{marsden2001discrete,hairer2003geometric,feng2010symplectic,morrison2017structure} and the references therein.
}

{Recently, there has been growing interest in developing numerical schemes that ensure energy stability for dissipative systems. Specifically, for dissipative physical systems involving nonconserved unknowns, such as the Allen-Cahn equation, the energy dissipation relation is a crucial property influencing  the stability of numerical schemes. This area of research has attracted considerable attention, particularly cerning energy stability. Previous studies (\cite{Eyre1998, XuTang2006, ShenYY2015}) have established foundational work, while more recent contributions (\cite{yang2016linear, ZYGW2017,SXYSAV2019}) have advanced efficient approaches for ensuring energy stability. 
%Significant research has been conducted in this area  (see, e.g., \cite{Eyre1998, XuTang2006, ShenYY2015, yang2016linear, ZYGW2017,SXYSAV2019}, among many others). 

For dissipative physical systems with conservative unknowns, such as density in the Fokker-Planck equation or in the Planck-Nernst-Poisson systems, maintaining mass conservation, alongside the energy dissipation law,  is essential. Additionally, ensuring the positivity or boundedness of solutions is critical in various applications. Recent advances  have been made in developing numerical methods that preserve these solution properties (see, e.g.,  \cite{LY2014,LW2017,li2020fisher,DJLQ2021,CCWW2022,LiuWWYZ2021}, and references therein). These developments involve a range of techniques, including explicit-implicit time discretizations applied to various PDE reformulations (e.g., \cite{LY12, CCH15,LM21,HS2021,DJLQ2021,LiuWWYZ2021,DCLYZ2021}) and the use  of optimal transport distances (\cite{li2020fisher, CCWW2022, LM23}). Designing higher-order schemes (beyond second order) that preserve all three solution properties remains a challenging task. Ongoing research in higher-order spatial discretizations explores the integration of limiting techniques to ensure  solution bounds (see, e.g., \cite{LY2014, LW2017, SCS18, LWYY22, CLY24} for applications of discontinuous Galerkin methods). This paper presents a fresh perspective rooted in the  Onsager principle, aiming to explore structure-preserving numerical methods for both categories of dissipative physical systems.}

%{\color{red} we are interested in two kinds of dissipative physical systems with nonconserved and conserved unknowns. Typical examples include some celebrated nonlinear partial differential equations (PDEs), e.g. the Allen-Cahn equation, the Cahn-Hilliard equation, the Fokker-Planck equations, the Planck-Nernst-Poisson systems, the Maxwell-Stefan diffusion equation, and the relatively new thermodynamically consistent  models for two-phase flow in porous media. For numerical approximations of dissipative physical systems, a central challenge lies in devising methods that satisfy critical system properties such as energy dissipation laws, mass conservation and preservation of solution positivity or boundedness. This paper explores physics-preserving numerical methods for dissipative physical systems from a fresh perspective grounded in the  Onsager principle.}

The Onsager principle, a fundamental law in thermodynamic physics \cite{Onsager1931,Onsager1931a}, has played a crucial role as a modeling tool in various soft matter problems \cite{DoiSoft}. In recent years, it has been employed not only as a modeling tool but also as an approximation tool for deriving reduced models \cite{Doi2015,ManXingkun2016,XuXianmin2016}, as well as in the development of numerical methods \cite{lu2021efficient,xu2023variational,xiao2024moving,liu2024variational}, among other applications.
\iffalse 
The Onsager principle, rooted in the second law of thermodynamics and microscopic reversibility, has found widespread application in deriving mathematically well-posed models for intricate systems across fluid mechanics, material science, biological science, and more. \fi {This effort to enhance the application of the Onsager principle faces two key challenges: (1) Selecting appropriate slow variables for effective application of the Onsager principle; and (2) identifying various free coefficients within  the structured dynamical system.} The present study specifically addresses the utilization of the Onsager principle to derive a natural time discretization for equations of interest.

The Onsager principle extends the Rayleigh’s principle of the least energy dissipation in Stokesian hydrodynamics and is a well-established model for the dynamics of dissipative systems near equilibrium. To elucidate the Onsager principle, we initiate the explanation by considering an ordinary differential system with gradient flow structure. Given the generalized coordinates $y=(y_1, \cdots, y_s)^\top$, their dynamical evolution is described by
the equation 
$$
A\dot y=-\nabla U(y),
$$
where we use the dot for the time derivative, $\dot y=\partial_t y$, $U$ represents a potential function. The matrix $A$ characterizes the energy dissipation in the system and is positive semi-definite, satisfying  $y \cdot A y\geq 0$ for all $y$.  A key insight from Onsager establishes that if the system exhibits microscopic time reversibility, then $A$ is symmetric ($A=A^\top$). This reciprocal relation allows us to express the above time evolution equation as a variational principle. Introducing the Rayleighian
\begin{align}
  R=\Phi+\dot U,   
\end{align}
where $\Phi$ is defined by 
$$
\Phi=\frac{1}{2}\|z\|^2_A:= \frac{1}{2} z\cdot  Az, \quad z=\dot y
$$
and $\dot U$ is defined by
$$
\dot U=\nabla U \cdot \dot y. 
$$
The force balance equation is equivalent to the 
condition
$ \nabla _{\dot y} R= 0$. In other words, the evolution of $y$ is determined by the requirement that $R$ is minimized with respect to $\dot y$. This is the essence of the Onsager principle.

Now, let's employ the same principle to discretize the equation. We straightforwardly adopt a discrete form of $R$:
$$
\Phi(z)+\frac{U(y)-U(y_k)}{\tau},
$$
subject to 
$$
z=\frac{y-y_k}{\tau}.
$$
Here, $\tau$ represents the time step, and $y_k$ is the solution update at time $t_k=k\tau$. With a fixed time step, we can determine $y_{k+1}$ by solving a constraint optimization problem:
\begin{align*}
y_{k+1} & ={\rm argmin}_{y, z}\left\{ 
\Phi(z) + \frac{U(y)-U(y_k)}{\tau}, \quad  y=y_k+\tau z 
\right\} \\
& ={\rm argmin}_{y, z}\left\{ 
U(y)+\tau \Phi(z), \quad y=y_k+\tau z 
\right\} \\
& = {\rm argmin}_{y}\left\{ 
U(y)+\frac{1}{2\tau } \|y-y_k\|^2_A
\right\}.
\end{align*} 
{\color{black} This scheme is known in the literature as 
the De Giorgi's minimizing movement scheme \cite{de1992movimenti,ambrosio2005gradient}.} This scheme exhibits unconditional {energy stability}, meaning that
$$
U(y_{k+1})+ \frac{1}{2\tau } \|y_{k+1}-y_k\|^2_A \leq U(y_k)
$$
holds for any $\tau>0$. %\revB{The energy stability is different fr}

% Typical examples include some celebrated nonlinear partial differential equations (PDEs), e.g. the Allen-Cahn equation, the Cahn-Hilliard equation, Fokker-Planck equations and the Planck-Nernst-Poisson systems, etc. We also show that some relatively new thermodynamically consistent  models for two phase porous media flow can also be derived in the same way. 

%{\color{red}The numerical approximations of dissipative physical systems have been well studied in literature. Notable approaches include the convex splitting method \cite{Eyre1998,ShenCS2012}, linear stabilization methods \cite{ShenYY2015,XuTang2006},  exponential time-differencing schemes \cite{BKV1998,CM2002,HO2005,DJLQ2021}, {\color{blue}primal dual method \cite{CCWW2022,CWW2024}} as well as recent advancements like the invariant energy quadratization (IEQ) \cite{YangJu2017,ZYGW2017} and scalar auxiliary variable (SAV) approaches \cite{SXYSAV2018,SXYSAV2019}. Specifically, for structure-preserving numerical methods applied to systems such as  the Planck-Nernst-Poisson system and the Fokker-Planck equations, the energetic variational approach has proven effective \cite{EHL2010,LiuWWYZ2021,DCLYZ2021}, {\color{blue}and one can also refer to other schemes preserving energy dissipation or maximum principle such as in \cite{PS2009,LY2014,LW2017,HH2020,HS2021,QWZ2021} and the references therein}.}

In this paper, we will apply the above methodology  to a broad class of dissipative physical systems, which might have complicated multi-physics processes. We first show that the Onsager principle can be used to derive the evolution equations for both conserved and non-conserved unknowns. Typical examples include some celebrated nonlinear partial differential equations (PDEs), e.g. the Allen-Cahn equation, the Cahn-Hilliard equation, the Fokker-Planck equations, the Planck-Nernst-Poisson systems, the Maxwell-Stefan diffusion equation, and the relatively new thermodynamically consistent  models for two-phase flow in porous media. {We then consider the time discretization of the dissipative systems.} We consider separately the two categories of problems with conserved or nonconserved unknowns. For problems with nonconserved parameters, the time discrete scheme is similar to that for ODEs. For systems with conserved parameters, we derive novel numerical schemes, which lead to solving an optimization problem in each time step. We show that all the schemes preserve the energy dissipation structure of continuous problems. The mass conservation property is also preserved for conserved parameters. We present the applications of the method to the above mentioned PDEs {with conserved or nonconserved unknowns}. Since we focus on the time discrete schemes, the space discretization and the solution of the optimization problem can be chosen freely. For problems with {nice convexity property, e.g. when the discrete energy functional is convex}, the solution of the optimization problem can be very efficient. 

{Finally, it is important to note that the Onsager principle applies only to purely dissipative systems where inertia effects are negligible. In systems where inertia plays an important role, the Onsager principle cannot be used directly. Instead,  generalized variational forms can be employed, such as the GENERIC approach \cite{grmela1997dynamics,Oettinger_beyond}, the metripletic formulation \cite{materassi2012metriplectic}, the energetic variational  approach \cite{feng2005energetic,hyon2008energetic}, and the generalized Onsager principle \cite{yang2016hydrodynamic,wang2021generalized}, among others \cite{van2020variational,yong2020intrinsic}. These generalized variational forms have also been used to develop numerical schemes in recent studies (e.g., \cite{liu2020variational,wang2022some}).  } 

The structure of the rest of paper is organized as follows. In Section 2, we present the derivation of the evolution equations by using the Onsager principle. In Section 3, we derive the time discrete schemes for two abstract problems, including many important PDEs as specific examples. In Section 4, we discuss briefly the spacial discretization and the optimization methods. We also present some numerical examples to show that our methods can indeed preserve the corresponding physical properties. Some concluding remarks are given in Section 5. 

\section{Onsager principle as a modeling tool}

Consider a {complete} set of variables, denoted as  $\tmmathbf{u} = (u_1, \cdots, u_s)^T$,  that characterize a physical system in $\Omega\in\mathbb{R}^m$. Let $\partial_t{\tmmathbf{u}} = (\partial_t u_1, \cdots, \partial_t u_s)^T$  represent the time derivative of $\tmmathbf{u}$. Assuming the system undergoes an irreversible process within a linear response regime and neglecting inertial effects, the process can be derived by using the the renowned Onsager variational
principle. We demonstrate this for two distinct scenarios: one involving a system without conservation and the other for a mass conserved system.

\subsection{Systems for nonconserved parameters}
Let $\mathcal{E} (\tmmathbf{u})$ represent {the free energy functional} in a system. Its time derivative can be denoted as $\dot{\mathcal{E}} (\tmmathbf{u} ; \partial_t{\tmmathbf{u}})$. According to the Onsager reciprocal relation, the energy dissipation function is {a  positive definite quadratic functional} with respect to $\partial_t{\tmmathbf{u}}$ and can be expressed by $\Phi (\tmmathbf{u} ; \partial_t{\tmmathbf{u}})$, satisfying 
\begin{equation}\label{phi} 
\Phi(\tmmathbf{u};\lambda \tmmathbf{v})=\lambda^2\Phi(\tmmathbf{u}; \tmmathbf{v}), \quad \Phi(\tmmathbf{u}; \tmmathbf{v})\geq 0,
\qquad
\forall \lambda\in \mathbb{R},\ \tmmathbf{v}:\Omega\rightarrow \mathbb{R}^s. 
\end{equation}
The Rayleighian functional is then defined as
\begin{equation}\label{e:Rayleigh1} 
\mathcal{R} (\tmmathbf{u} ; \partial_t{\tmmathbf{u}}) \assign \dot{\mathcal{E}}
   (\tmmathbf{u} ; \partial_t{\tmmathbf{u}}) + \Phi (\tmmathbf{u} ;
   \partial_t{\tmmathbf{u}}). 
\end{equation}
By the Onsager principle, the dynamic equation of the system can be
derived by minimizing the Rayleighean functional with respect to
$\partial_t{\tmmathbf{u}}$, i.e.,  \ \
\begin{equation}\label{e:AbsOnsager}  
\min_{\partial_t{\tmmathbf{u}}}  \mathcal{R} (\tmmathbf{u} ; \partial_t{\tmmathbf{u}}). 
\end{equation}
It is noteworthy that
 $\mathcal{R} (\tmmathbf{u} ; \partial_t{\tmmathbf{u}})$ is a quadratic
positive definite form with respect to $\partial_t{\tmmathbf{u}}$ and
\[ \dot{\mathcal{E}} (\tmmathbf{u} ; \partial_t{\tmmathbf{u}}) = \langle
   \frac{\delta \mathcal{E} (\tmmathbf{u})}{\delta \tmmathbf{u}},
   \partial_t{\tmmathbf{u}} \rangle . \]
{Here $\langle\cdot,\cdot\rangle$ denotes the $L^2$ inner product.}
   This variational problem straightforwardly leads to the dynamic equation:
\begin{equation}\label{e:AbsDynamic}  \frac{\delta \Phi}{\delta (\partial_t{\tmmathbf{u}})} + \frac{\delta \mathcal{E}
   (\tmmathbf{u})}{\delta \tmmathbf{u}} = 0. 
   \end{equation}
{Here $ \frac{\delta \Phi}{\delta (\partial_t{\tmmathbf{u}})}= \frac{\delta \Phi}{\delta \tmmathbf{v}}(\tmmathbf{u};\tmmathbf{v})\Big|_{\tmmathbf{v}=\partial_t{\tmmathbf{u}}}$, or more precisely,  $\langle \frac{\delta \Phi}{\delta (\partial_t{\tmmathbf{u}})}, \delta \tmmathbf{v}\rangle =\frac{d }{d s}\Phi(\tmmathbf{u};\tmmathbf{v}+s \delta \tmmathbf{v})\Big|_{s=0,v=\partial_t{\tmmathbf{u}}}$.}
\subsection{Systems for conserved parameters}
In many systems, certain physical variables are conserved, such as the total mass of each component in a binary system,  { particularly in the absence of phase transitions in the bulk and under normal flux across the boundary}. In such cases, we typically encounter a set of conservation equations:
\begin{equation}\label{e:conserv}
\partial_t {u}_i + \nabla \cdot \tmmathbf{j}_i = 0, \qquad i = 1, \ldots, s. 
\end{equation}
Here $\tmmathbf{j}_i$ represents the flux corresponding to $u_i$. We assume that the problem is defined in a bounded domain $\Omega$, and 
$\tmmathbf{j}_i\cdot\tmmathbf{n}=0$ on $\partial \Omega$, where $\tmmathbf{n}$ is the outward unit normal vector.
In such a system, the
energy dissipation can be defined as a quadratic functional $\Phi
(\tmmathbf{u} ; \tmmathbf{j})$ with respect to $\tmmathbf{j} : =
(\tmmathbf{j}_1^T, \cdots, \tmmathbf{j}_s^T)^T$. Applying the Onsager reciprocal
relation, we assume that $\Phi (\tmmathbf{u}; \tmmathbf{j})$ is a positive
definite quadratic form with respect to $\tmmathbf{j}$, satisfying 
\[
\Phi(\tmmathbf{u};\lambda \tmmathbf{j})=\lambda^2\Phi(\tmmathbf{u}; \tmmathbf{j}), \quad \Phi(\tmmathbf{u};\tmmathbf{j})\geq 0,
\qquad
\forall \lambda\in \mathbb{R},\ \tmmathbf{j}:\Omega\rightarrow \mathbb{R}^{ms}. 
\]
\ The Rayleighean
functional is then given by
\begin{equation}\label{e:Reyleign2}
\mathcal{R} (\tmmathbf{u} ; \partial_t{\tmmathbf{u}}, \tmmathbf{j}) \assign \Phi
   (\tmmathbf{u} ; \tmmathbf{j}) + \dot{\mathcal{E}} (\tmmathbf{u} ;
   \partial_t{\tmmathbf{u}}). 
\end{equation}
By the Onsager principle, the dynamics of the system can be obtained by
minimizing the Rayleighean functional with respect to $(\partial_t{u},
\tmmathbf{j})$ under the constraints of mass conservation equations, i.e.,
\begin{align}
  \min_{ \partial_t{\tmmathbf{u}},\tmmathbf{j}} & \ \mathcal{R} (\tmmathbf{u} ;
  \partial_t{\tmmathbf{u}}, \tmmathbf{j}), \label{e:AbsOnsConserv}\\
  s.t. & \ \partial_t {u}_i + \nabla \cdot \tmmathbf{j}_i = 0, \quad i = 1, \ldots,
  s.\nonumber
\end{align}
{Here and in what follows,  ``$s.t.$" is an abbreviation for ``subject to".}
Once again $\dot{\mathcal{E}} (\tmmathbf{u} ; \partial_t{\tmmathbf{u}})$ is a linear
functional with respect to $\partial_t{\tmmathbf{u}}$. Introduce some multiplies
$\mu_i$, and define
\[ \mathcal{R}_{\mu} \assign \mathcal{R} - \sum_{i = 1}^s \int_{\Omega} \mu_i
   (\partial_t{u}_i + \nabla \cdot \tmmathbf{j}_i) \tmop{d\tmmathbf{x}} . \]
The variational problem reduces to a dynamics equation given by the
Euler-Lagrange equation of $\mathcal{R}_{\mu}$,
\begin{equation}
\left\{
\begin{array}{ll}
    \partial_t{u}_i + \nabla \cdot \tmmathbf{j}_i =0,  & \hspace{3em} i = 1, \ldots,
  s, \\
  \frac{\delta \Phi}{\delta \tmmathbf{j}_i} + \nabla \mu_i  =  0,    & \hspace{3em} i = 1, \ldots, s,\\
   \frac{\delta \mathcal{E}}{\delta u_i} - \mu_i  =  0,&  \hspace{3em} i = 1,
  \ldots, s.
\end{array}
  \right. \label{e:AbsDynamicConserv}
\end{equation}
In the derivation of the second equation, we have used integration by parts and the boundary condition that $\tmmathbf{j}_i = 0$ on $\partial \Omega$. For simplicity, we assume there are no fluxes on the boundary $\partial \Omega$ throughout the paper. Other types of boundary conditions can also be discussed, but they are more involved within the framework of the Onsager principle. We will show some examples below.

%In the derivation of the second equation, we have used integral by part and
%the boundary condition that $\tmmathbf{j}_i = 0$ on $\partial \Omega$. 
%For simplicity, we assume there are no fluxes on the boundary $\partial \Omega$ throughout the paper. 
%Other types of boundary conditions can also be discussed within the framework of the Onsager principle, but they are more involved.
\begin{remark}\color{black}
In the derivations above, we begin with free energy and dissipation functions and apply the Onsager principle to derive a dynamic equation. These functions typically reflect  the physical properties of the actual system under consideration.  Alternatively, in certain applications, one might begin with a PDE. In such cases, the energy and dissipation functions can be determined by analyzing the energy dissipation structure of the PDE. However, 
the specific energy and dissipation functions identified may not always be unique (see  \cite{duan2019numerical} for an example with the porous media equation).
\end{remark}
\begin{remark}
{In certain applications, the system is not entirely  dissipative, meaning that the inertial effects cannot be neglected.  In such cases, convective terms appear in the dynamic equations (e.g., the Navier-Stokes equations).  It is important to note that these systems cannot be modeled directly using the Onsager principle. Alternative methods may be employed,  such as the GENERIC method \cite{grmela1997dynamics}, the energetic variational method \cite{feng2005energetic}, and the generalized Onsager principle \cite{wang2021generalized}, among others.}
\end{remark}

\subsection{Application Examples}\label{AE} 
We present several examples widely used in science and engineering. 

{\tmstrong{Example 1}}: The Allen-Cahn equation.

The Allen-Cahn equation (after John W. Cahn and Sam Allen) is a reaction–diffusion equation of mathematical physics which describes the phase transition of a system, characterized by an order parameter $u$, a scalar function defined in a domain $\Omega \subset \mathbb{R}^m$.\quad  The domain $\Omega$ represents the physical space occupied by the system.

In this context, the energy functional and the dissipation function are defined as follows:
\begin{eqnarray}
  \mathcal{E} (u) & = &  \int_{\Omega} \frac{\alpha}{2} | \nabla u |^2 + F (u) \tmop{d\tmmathbf{x}} + \int_{\partial \Omega} \gamma (u) \tmop{dS}, \label{e:EngPF}\\
  \Phi (\partial_t{u}) & = & \frac{\xi_0}{2} \int_{\Omega} | \partial_t{u} |^2 \tmop{d\tmmathbf{x}} +
  \frac{\xi_1 }{2} \int_{\partial \Omega} | \partial_t{u} |^2 \tmop{dS}.
  \label{e:DisspAC}
\end{eqnarray}
Here, $\alpha$ is a positive parameter, $\xi_0, \xi_1$ are parameters, and $F(u)$ represents the bulk free energy density, typically modeled as a double-well function, with a common choice being $F(u) = \frac{(1 - u^2)^2}{4}$. The term $\gamma(u)$ denotes the free energy density on the boundary, assuming the presence of energy dissipation at the system boundary.

Direct calculations yield the following expression:
\begin{eqnarray}
  \dot{\mathcal{E}} (u, \partial_t{u}) & = &  \int_{\Omega} \alpha \nabla u \cdot
  \nabla \partial_t u + F' (u) \partial_t{u} \tmop{d\tmmathbf{x}} + \int_{\partial \Omega} \gamma'  (u) \partial_t{u}
  \tmop{dS}\nonumber\\
  & = & \int_{\Omega} (- \alpha \Delta u + f (u)) \partial_t{u} \tmop{d\tmmathbf{x}} +
  \int_{\partial \Omega} (\alpha \partial_n u + \gamma'  (u)) \partial_t{u} \tmop{dS}. \label{e:DEngPH}
\end{eqnarray}
In the second {line}, we have utilized integration by parts and introduced the notation $f(u) = F'(u)$.

The dynamic equation for the nonconserved coefficient is then reduced to:
\begin{eqnarray}
 \xi_0 \partial_t{u} - \alpha \Delta u + f (u) & = & 0, \quad \tmop{in}\;  \Omega ; \label{e:AC}\\
  \xi_1 \partial_t{u} + \alpha \partial_n u + \gamma'  (u) & = & 0, \quad \tmop{on} \; \partial
  \Omega .\label{e:bndAC}
\end{eqnarray}
When $\xi_1 = 0$, the boundary condition becomes an equilibrium condition, describing the wetting property of the boundary $\partial\Omega$ \cite{lu2021efficient}. If we further assume that $\gamma$ is independent of $u$, then the boundary condition is reduced to the standard homogeneous Neumann condition.

{\tmstrong{Example 2}}: The Cahn-Hilliard equation.

The Cahn-Hilliard equation (after John W. Cahn and John E. Hilliard) is an equation of mathematical physics which describes the process of phase separation in a system, characterized by an order parameter $u$. The free energy functional is identical to that of the Allen-Cahn equation in Equ.~\eqref{e:EngPF}. However, in this case, the parameter $u$ is conserved and satisfies
\[ \partial_t{u} + \nabla \cdot \tmmathbf{j} = 0, \; \text{in}\; \Omega, \quad \tmmathbf{j}\cdot\tmmathbf{n}|_{\partial\Omega}=0,\]
where $\tmmathbf{j} : \Omega \rightarrow \mathbb{R}^m$ is a vector function.
In this scenario, the dissipation functional is defined as
\begin{equation}\label{e:tempPhi}
     \Phi(u;\tmmathbf{j},\partial_t \tmmathbf{u}) = \int_{\Omega} \frac{\tmmathbf{j}^T A (u) \tmmathbf{j}}{2} \tmop{d\tmmathbf{x}}
   + \frac{\xi}{2} \int_{\partial \Omega} | \partial_t{u} |^2 \tmop{dS}, 
\end{equation}
where $A (u)$ is a positive definite symmetric matrix function, akin to a friction coefficient.  {We can define the Reyleighian functional as follows: 
$$\mathcal{R}= \Phi(\tmmathbf{u};\tmmathbf{j},\partial_t \tmmathbf{u})+ \dot{\mathcal{E}} (u, \partial_t{u}),
$$
where $\dot{\mathcal{E}}(\tmmathbf{u}; \partial_t{u})$ is given in Equation~\eqref{e:DEngPH}. We aim to minimize the functional $\mathcal{R}$ with respect to $\tmmathbf{j}$ and $\partial_t \tmmathbf{u}$ under the constraint $\partial_t{u} + \nabla \cdot \tmmathbf{j} = 0, \; \text{in}\; \Omega$. Introducing a Lagrange multiplier $\mu(\tmmathbf{x})$,  we define 
$$
\mathcal{R}_\mu=\mathcal{R}-\int_{\Omega} \mu(\partial_t{u} + \nabla \cdot \tmmathbf{j}) \tmop{d\tmmathbf{x}}.
$$
Setting the first variation of the functional $\mathcal{R}_\mu$ to zero yields the following relations: 
\begin{align*}
   &A(u)\tmmathbf{j} +\nabla\mu=0, \\
    &\mu=-\alpha \Delta u+f(u), \quad \ \ \tmop{in} \  \Omega, \\
    &\xi \partial_t{u}  =  - (\partial_n u + \gamma'  (u)) \quad \ \tmop{on} \ \partial \Omega.
\end{align*}
Combining these results with the conservation equation and the boundary conditions, we obtain } the dynamic equation %for the conserved parameter  %is then reduced to
\begin{eqnarray*}
  \partial_t{u} + \nabla \cdot \tmmathbf{j} & = & 0, \hspace{7em} \tmop{in} \ \Omega,\\
  A (u) \tmmathbf{j} & = & - \nabla \mu, \hspace{5em} \ \tmop{in} \ \Omega,\\
  \mu & = & - \alpha \Delta u + f (u), \quad \ \ \tmop{in} \  \Omega, \hspace{3em}\\
  \tmmathbf{j}\cdot\tmmathbf{n} & = & 0 \qquad \hspace{5em} \ \tmop{on} \ \partial \Omega,\\
  \xi \partial_t{u} & = & - (\partial_n u + \gamma'  (u)) \quad \ \tmop{on} \ \partial
  \Omega .
\end{eqnarray*}
Introduce a mobility matrix $M (u) = A^{- 1}$, we obtain 
\begin{eqnarray}
  \partial_t{u} & = & \nabla \cdot (M (u) \nabla \mu), \quad\ 
  \tmop{in} \ \Omega,\label{e:CH_a}\\
  \mu & = & - \alpha \Delta u + f (u), \quad \ \,  \tmop{in} \ \Omega,\label{e:CH_b}\\
 M (u) \nabla \mu \cdot\tmmathbf{n} & = & 0, \qquad \hspace{4em}\ \ \  \tmop{on} \ \partial \Omega,\label{e:bndCH_a}\\
  \xi \partial_t{u} & = & - (\partial_n u + \gamma'  (u)), \quad \tmop{on} \ \partial
  \Omega .\label{e:bndCH_b}
\end{eqnarray}
This represents a general form of the Cahn-Hilliard equation,
which has been used to describe wetting phenomena of a droplet in \cite{chen2014analysis}. Once again, when $\xi =
0$, we obtain the equilibrium boundary condition for $u$ on $\partial \Omega$.

{\tmstrong{Example 3}}: The Fokker-Planck equation.

In statistical mechanics and information theory, the Fokker-Planck equation (FPE) (after Adriaan Fokker and Max Planck in 1914 and 1917) is used to investigate the diffusion of a specific type of particles under the influence of an external potential field. This equation can also be extended to other observables \cite{Le2000}. %The Fokker-Planck equation (FPE)  finds applications in various fields such as information theory, graph theory, data science, finance, economics, etc. 
Here we are concerned with FPE having the form 
$$
\partial_t u=\nabla \cdot( \nabla U u)+\beta^{-1}\Delta u,
$$
where $u > 0$ represents  the density function of the particles,  $U(x)$ represents an external potential field, and $\beta>0$ is a given constant. It is known \cite{JKO1998}  the FPE dynamics can be regarded as a gradient flow of the free energy of form  
\[ \mathcal{E} (u) = \beta^{-1} \int_{\Omega} u \log u + u U (\tmmathbf{x}) \tmop{d\tmmathbf{x}}, \]
with respect to the Wasserstein metric on an appropriate class of probability measures. It is also well-known that the Fokker-Planck equation is inherently related to the It$\hat{o}$ stochastic differential equation \cite{Ri1996}
$$
dX(t)=-\nabla U(X(t))dt+\sqrt{2\beta^{-1}}dW(t), \quad X(0)=X^0.
$$
Here, $W(t)$ is a standard $d$-dimensional Wiener process, and $X^0$ is an $d$-dimensional random vector with probability density $u(t=0)$. In this context, $u$ furnishes the probability density at time $t$ for finding the particle at position $x \in \Omega$.  

Here we demonstrate how to derive FPE by the Onsager principle. Considering  the conservation property: 
\[ \partial_t u + \nabla \cdot \tmmathbf{j} = 0, \; \text{in}\; \Omega, \quad \tmmathbf{j}\cdot\tmmathbf{n}|_{\partial\Omega}=0, \]
where $\tmmathbf{j}$ is the mass flux, we define the energy dissipation function as
\[ \Phi = \frac{1}{2} \int_{\Omega} \frac{|\tmmathbf{j}|^2}{u} \tmop{d\tmmathbf{x}} . \]
%Here $\alpha > 0$ is a phenomenological parameter.
By direct calculation, the chemical potential in this case is given by
\[ \mu = \frac{\delta \mathcal{E}}{\delta u} = \beta^{-1}(1 + \log u) + U. \]
Then by applying the Onsager principle,
%{\color{blue}
\begin{align*}
\min_{\partial_t{u}, \tmmathbf{j}} &\   \mathcal{R}  = \Phi+\dot{\mathcal{E}}
%\left\{ =  \frac{1}{2}
% \int_{\Omega}  u^{- 1} | \tmmathbf{j}|^2 \tmop{d\tmmathbf{x}} + %\langle \mu, \partial_t u\rangle\right\}
, 
  \\ 
%  \dot{\mathcal{E}},\\
  s.t. & \ \partial_t{u} + \nabla \cdot \tmmathbf{j} = 0 \; \text{in}\; \Omega, \quad \tmmathbf{j}\cdot\tmmathbf{n}|_{\partial\Omega}=0,
\end{align*}
the dynamic equation for conserved system is reduced to
\begin{eqnarray*}
  \partial_t{u} + \nabla \cdot \tmmathbf{j} & = & 0, \qquad \tmop{in} \ \Omega,\\
  \frac{\tmmathbf{j}}{u} + \nabla \mu & = & 0, \qquad \tmop{in} \
  \Omega,\\
  \mu & = & \beta^{-1}(1 + \log u) + U, \qquad \tmop{in} \ \Omega,\\
  \partial_n \mu & = & 0 \qquad \tmop{on} \  \partial \Omega .
\end{eqnarray*}
We can do further simplification by substituting the third equation into the
second one. This leads to
\begin{eqnarray}
  \partial_t{u} + \nabla \cdot \tmmathbf{j} & = & 0, \qquad \tmop{in} \ \Omega,
  \label{e:FP_a}\\
  \tmmathbf{j} & = & - (\beta^{-1} \nabla u + u \nabla U), \qquad \tmop{in} \
  \Omega,\label{e:FP_b}\\
  \tmmathbf{j} \cdot \tmmathbf{n} & = & 0 \qquad \tmop{on} \; \partial \Omega .
  \label{e:bndFP}
\end{eqnarray}

{\tmstrong{Example 4}}: The Planck-Nernst-Poisson equation.

The Nernst–Planck equation, named after Walther Nernst and Max Planck, is a conservation of mass equation employed to depict the movement of a charged chemical species in a fluid medium. It extends Fick's law of diffusion to account for cases where the diffusing particles are also influenced by electrostatic forces, as governed by the Poisson equation. The Planck-Nernst-Poisson (PNP) equation specifically describes the diffusion of ions in solutions. It is crucial to consider the energy associated with the static electric field.

Suppose there are $s$ types  of ions, and the density of particles is characterized by
$u_i$ for $i=1,\ldots, s$.  These density functions satisfy the conservation equation:
\[ \partial_t{u}_i + \nabla \cdot \tmmathbf{j}_i = 0 \; \text{in}\; \Omega, \quad i = 1, \ldots, s; \quad \tmmathbf{j}_i \cdot \tmmathbf{n}=0 \quad \text{on}\; \partial \Omega. \]
The total free energy is given by 
\[ \mathcal{E} (\tmmathbf{u}) = \int_{\Omega} \sum^s_{i = 1} u_i \log u_i +
   \frac{\varepsilon_0}{2} | \nabla \varphi (\tmmathbf{x}) |^2 \tmop{d\tmmathbf{x}}, \]
where $\varphi$ is the electric potential, $\varepsilon_0(x)$ is the permittivity. It is related to $u_i$ by the
static electric field equation:
\begin{equation}\label{e:electPoten}
 - \nabla \cdot (\varepsilon_0 \nabla \varphi) = f(\tmmathbf{x})+ \sum_{i = 1}^n z_i u_i, \; \text{in}\ \Omega,
\end{equation}
where $z_i$ is the rescaled charge, 
$f(x)$ is the permanent (fixed) charge density of the system. Boundary conditions for $\varphi$ can vary, we simply take $\partial_n \varphi$ on $\partial \Omega$.
The energy dissipation functions is defined as,
\[ \Phi \assign \frac{1}{2} \int_{\Omega} \sum_i D_i^{-1}(\tmmathbf{x}) u_i^{- 1} | \tmmathbf{j}_i
   |^2 \tmop{d\tmmathbf{x}}, \]
 where $D_i(x)>0$ is an diffusion coefficient for $i$-th ion.
 Direct calculations show that
\[ \frac{\delta \mathcal{E}}{\delta u_i} = (1 + \log u_i + z_i \varphi), \]
where we have used the static electric field equation   and the boundary
condition $\partial_n \varphi = 0$. Then by applying the Onsager principle,
\begin{align*}
\min_{\partial_t{\tmmathbf{u}}, \tmmathbf{j}} &\   \mathcal{R} = \frac{1}{2}
  \int_{\Omega} \sum_i D_i^{-1}(x) u_i^{- 1} | \tmmathbf{j}_i |^2 \tmop{d\tmmathbf{x}} +
  \dot{\mathcal{E}},\\
  s.t. & \ \partial_t{u}_i + \nabla \cdot \tmmathbf{j}_i = 0 \; \text{in}\; \Omega, \quad \tmmathbf{j}_i\cdot\tmmathbf{n}|_{\partial\Omega}=0,
\end{align*}
we obtain the PNP system of  equations: 
\begin{align}
& \partial_t u_i   = \nabla\cdot [D_i(\tmmathbf{x}) (\nabla u_i +  z_i u_i
  \nabla \varphi)], \quad \tmmathbf{x}\in \Omega,\quad i = 1, \ldots s ;\label{e:PNP_a}\\
  & - \nabla \cdot (\varepsilon_0 \nabla \varphi) = f(\tmmathbf{x})+ \sum_{i = 1}^s z_i u_i, \qquad \tmmathbf{x}\in \Omega,\label{e:PNP_b}\\
  & \partial_n \varphi  =0 \quad \text{on}\; \partial \Omega. \label{e:bndPNP}
\end{align}
Note that the external electrostatic potential $\varphi$ is influenced by applied potential, which can be modeled by prescribing a boundary
condition. The analysis above applies well to a general form of boundary conditions:
$$
\alpha \varphi +\beta \epsilon_0(\tmmathbf{x})\partial_n \varphi  =0, \quad \tmmathbf{x}\in \partial \Omega,
$$
if we take a modified energy of form 
\[ \mathcal{E} (\tmmathbf{u}) = \int_{\Omega} \left( \sum^s_{i = 1} u_i \log u_i +
   \frac{1}{2} (f+\sum_{i=1}^s z_iu_i) \varphi(\tmmathbf{x}) \right) \tmop{d\tmmathbf{x}}.
 \]
Here $\alpha, \beta$ are physical parameters such that $\alpha\cdot \beta \geq 0$. Refer to \cite{LM23} for cases with non-homogeneous boundary conditions. 
%and $\varphi^b(x,t)$ is a given function. 

{\tmstrong{Example 5}}: The Maxwell-Stefan diffusion equation.

For a system containing multiple types of particles, the Maxwell-Stefan diffusion equation is employed to describe the diffusion of the multi-component system. The equations that describe these transport processes have been developed independently and in parallel by James Clerk Maxwell (1965) for dilute gases and Josef Stefan (1871) for liquids.  Let $u_i > 0, i = 1, \ldots, s$ denote the number density of each component, and all components are conserved, satisfying the conservation equation
\begin{equation}\label{e:conservMS} \partial_t u_i + \nabla \cdot \tmmathbf{j}_i = 0 \; \; \text{in}\; \Omega,\quad j = 1, \ldots, s,
\quad \tmmathbf{j}_i\cdot\tmmathbf{n}|_{\partial\Omega}=0.
\end{equation}
Here, $\tmmathbf{j}_i$ is the flux for the $i$-th component. The total number density of all components is assumed to be constant:
\begin{equation}\label{e:AllconservMS} 
\sum^s_{i = 1} u_i = 1. 
\end{equation}
The free energy in the system is given by
\begin{equation}\label{e:EngMS}
    \mathcal{E} (\tmmathbf{u}) = \int_{\Omega} \sum_{i = 1}^s u_i \log u_i
   \tmop{d\tmmathbf{x}}, 
\end{equation} 
and the dissipation function is defined as
\begin{equation}\label{e:DisspMS}
\Phi (\tmmathbf{u};\tmmathbf{j}) = \frac{1}{4} \int_{\Omega} \sum_{i, j = 1}^s b_{i j}
   u_i u_j \left| \frac{\tmmathbf{j}_i}{u_i} - \frac{\tmmathbf{j}_j}{u_j}
   \right|^2 \tmop{d\tmmathbf{x}}. 
   \end{equation}
Here coefficient $B = (b_{i j})$ is symmetric and positive definite.

To simplify the derivation, we introduce new variables $\tmmathbf v_i = \frac{\tmmathbf j_i}{u_i}$ to replace $\tmmathbf j_i$. In this case, the dissipation function in Equ.~\eqref{e:DisspMS} is rewritten as
\[ \Phi (\tmmathbf{u} ; \tmmathbf{v}) = \frac{1}{4} \int_{\Omega} \sum_{i, j
   = 1}^s b_{i j} u_i u_j | \tmmathbf{v}_i - \tmmathbf{v}_j |^2 \tmop{d\tmmathbf{x}} . \]
The mass conservation equation~\eqref{e:conservMS} is reduced to
\[ \partial_t u_i + \nabla \cdot (u_i  \tmmathbf{v}_i) = 0 \; \text{in}\; \Omega,\quad j = 1,
   \ldots, s; \quad \tmmathbf{u}\cdot\tmmathbf{n}|_{\partial\Omega}=0. \]
   The additional constraint in Equ.~\eqref{e:AllconservMS} is equivalent to the equation
\[ \sum_{i = 1}^s \nabla \cdot (u_i \tmmathbf{v}_i) = 0. \]
Applying the Onsager variational principle, direct calculations lead to
\[ \dot{\mathcal{E}} (\tmmathbf{u} ; \partial_t {\tmmathbf{u}}) = \int_{\Omega}
   \sum_{i = 1}^s (1 + \log u_i) \partial_t u_i \tmop{d\tmmathbf{x}} . \]
Then the Rayleighean is defined as
\[ \mathcal{R} (\tmmathbf{u} ; \partial_t {\tmmathbf{u}}, \tmmathbf{v}) =
   \frac{1}{4} \int_{\Omega} \sum_{i, j = 1}^s b_{i j} u_i u_j |
   \tmmathbf{v}_i - \tmmathbf{v}_j |^2 \tmop{d\tmmathbf{x}} + \int_{\Omega} \sum_{i =
   1}^s (1 + \log u_i) \partial_t u_i \tmop{d\tmmathbf{x}} . \]
   The dynamic equation is derived by minimizing the Rayleighian functional:
\begin{align*}
\min_{\dot{\tmmathbf{u}},\tmmathbf{v}} & \  \mathcal{R} (\tmmathbf{u} ;
  \partial_t{\tmmathbf{u}}, \tmmathbf{v}),\\
  s.t. & \ \partial_t u_i + \nabla \cdot (u_i  \tmmathbf{v}_i) = 0, \qquad j
  = 1, \ldots, s,\\
  & \ \sum_{i = 1}^s \nabla \cdot (u_i \tmmathbf{v}_i) = 0.
\end{align*}
Introducing Lagrangian multipliers $\mu_i$ for the conservation equation and a multiplier $p$ for the last equation, we set
\[ \mathcal{R}_{\mu} (\tmmathbf{u} ; \partial_t{\tmmathbf{u}}, \tmmathbf{v}) \assign
   \mathcal{R}(\tmmathbf{u} ; \partial_t{\tmmathbf{u}}, \tmmathbf{v}) -
   \sum_{i = 1}^s \int_{\Omega} \mu_i (\partial_t u_i + \nabla \cdot (u_i 
   \tmmathbf{v}_i)) \tmop{d\tmmathbf{x}} - \int_{\Omega} p \left( \sum_{i = 1}^s \nabla
   \cdot (u_i \tmmathbf{v}_i) \right) \tmop{d\tmmathbf{x}} . \]
Considering the first order derivation of the functional, we obtain the system: 
\begin{eqnarray}
  \partial_t u_i + \nabla \cdot (u_i  \tmmathbf{v}_i) & = & 0, \qquad
  \tmop{in} \ \Omega,\label{e:MS0_a}\\
  \sum_{j = 1}^s b_{i j} u_j  (\tmmathbf{v}_i - \tmmathbf{v}_j) + \nabla \mu_i
  & = & - \nabla p, \qquad \tmop{in}  \ \Omega,\label{e:MS0_b}\\
  \mu_i & = & 1 + \log u_i \qquad \tmop{in} \ \Omega,\label{e:MS0_c}\\
  \sum_{i = 1}^s \nabla \cdot (u_i \tmmathbf{v}_i) & = & 0, \qquad \tmop{in}\ 
  \Omega,\label{e:bndMS0}\
\end{eqnarray}
where we have used the boundary conditions $\partial_n \mu_i = 0$ and
$\partial_n p = 0$. Substitute the third equation into the second one, we get 
\[ \sum_{j = 1}^s b_{i j} u_j  (\tmmathbf{v}_i - \tmmathbf{v}_j) +
   \frac{1}{u_i} \nabla u_i = - \nabla p. \]
By the symmetricity of $b_{i j}$, we have $\sum_{j = 1}^s b_{i j} u_i u_j 
(\tmmathbf{v}_i - \tmmathbf{v}_j) = 0$. Thus by multiplying the above equation
with $u_i$ and do summation with respect to $i$, we obtain 
\begin{equation}
     - \nabla p = \frac{1}{\sum_{i = 1}^s u_i} \sum_{i = 1}^s \nabla u_i . \label{e:tempP}
\end{equation}
The system~\eqref{e:MS0_a}-\eqref{e:bndMS0} is further simplified to
\begin{eqnarray*}
  \partial_t u_i + \nabla \cdot (u_i  \tmmathbf{v}_i) & = & 0, \qquad
  \tmop{in} \ \Omega,\\
  \sum_{j = 1}^s b_{i j} u_j  (\tmmathbf{v}_i - \tmmathbf{v}_j) +
  \frac{1}{u_i} \nabla u_i & = & \frac{1}{\sum_{j = 1}^s u_j} \sum_{j = 1}^s
  \nabla u_j, \qquad \tmop{in} \ \Omega .
\end{eqnarray*}
Note that the boundary conditions are reduced to $\partial_n u_i = 0$ on
$\partial \Omega$. The system above can be expressed as
\begin{eqnarray}
  \partial_t u_i + \nabla \cdot (u_i  \tmmathbf{v}_i) & = & 0, \qquad
  \tmop{in} \ \Omega,\label{e:MS_a}\\
  \sum_{j = 1}^s b_{i j} u_j  (\tmmathbf{v}_i - \tmmathbf{v}_j) + \nabla \log
  u_i & = & \frac{1}{\sum_{j = 1}^s u_j} \sum_{j = 1}^s u_j \nabla \log u_j,
  \quad \tmop{in}  \ \Omega,\label{e:MS_b}\\
  \partial_n u_i & = & 0 \qquad \tmop{on} \  \partial \Omega.\label{e:bndMS}
\end{eqnarray}

{\tmstrong{Example 6}}: The incompressible and immiscible multi-phase flow in porous media.

We now turn our attention to the incompressible and immiscible multi-phase flow in porous media which has extensive applications in hydrology and petroleum reservoir engineerings. Recently, a thermodynamically consistent model was developed for the incompressible and immiscible two-phase flow in porous media \cite{GaoKouSW}. Different from the classical models for two-phase flow in porous media, the  model introduces a logarithmic free energy to characterize the capillarity effect, and the system satisfies {an energy dissipation relation shown in \cite{GaoKouSW}}. In the following, we can also rebuild the thermodynamically consistent model for the incompressible and immiscible multi-phase flow in porous media based on the Onsager principle. 

Let $u_i, i = 1, \ldots, s,$
represent the volume fraction of the $i$-th phase { which is also known as saturation}, ensuring  that $\sum_{i = 1}^s u_i = 1$.
Let $\phi$ be porosity of the porous media. The unknown function satisfies the
following conservation law: 
\[ \phi \partial_t {u}_i + \nabla \cdot \tmmathbf{v}_i = 0, \qquad i = 1, \ldots, s,\qquad \tmmathbf{v}_i\cdot\tmmathbf{n}|_{\partial\Omega}=0.
\]
Here, $\tmmathbf{v}_i$ denotes the average velocity of the $i$-th phase fluid.
In some cases, assuming that the equilibrium state is determined
by a free energy $\mathcal{E} (\tmmathbf{u}) = \int_{\Omega} \phi F
(\tmmathbf{u}) d x$  is convenient.  One possible choice for $F (\tmmathbf{u})$   is  \[F (\tmmathbf{u}) = \sum_{j = 1}^s \sigma_i u_j
( \log u_j -1)+ \sum_{i, j = 1}^s \alpha_{i j} u_i u_j + \sum_{j = 1}^s b_j
u_j.\] The energy dissipation in this case is given by
\[ \Phi = \int_{\Omega} \sum_{i = 1}^s \frac{1}{2} \tmmathbf{v}_i K_i^{- 1}
   \tmmathbf{v}_i \tmop{d\tmmathbf{x}}. \]
Here,  $K_i$ is a positive definite and symmetric matrix which is dependent of $\tmmathbf{u}$. Similar to the derivation for the Maxwell-Stefan equation, we obtain 
\begin{eqnarray}
  \phi \partial_t {u}_i + \nabla \cdot \tmmathbf{v}_i & = & 0, \quad {i=1,\ldots,s}, \quad \tmop{in} \
  \Omega,\label{e:Porous_a}\\
  \tmmathbf{v_i} & = & - K_i (\nabla \mu_i + \nabla p),\quad {i=1,\ldots,s},  \quad \tmop{in} \
  \Omega,\label{e:Porous_b}\\
  \mu_i & = & \frac{\partial F}{\partial u_i},\quad {i=1,\ldots,s},  \quad \tmop{in} \ \Omega,\label{e:Porous_c}\\
  \sum_{i = 1}^s u_i & = & 1, \qquad \tmop{in} \ \Omega,\label{e:Porous_d}\\
  \partial_n \mu_i + \partial_n p & = & 0, \qquad \tmop{on} \ \partial \Omega .\label{e:bndPorous}
\end{eqnarray}

{\tmstrong{Example 7}}: The multi-phase flow in porous media with rock compressibility.

For the multi-phase flow in porous media with rock compressibility, the variation of porosity with respect to effective pressure can be expressed as
\begin{eqnarray}
\frac{d \phi}{\phi} = \gamma d p_e,\label{compress_phi}
\end{eqnarray}
which yields 
\begin{eqnarray}\label{phi_pe}
\phi=\phi_r e^{\gamma(p_e-p_r)}.
\end{eqnarray}
Here $\gamma$ is the rock compressibility coefficient, $p_r$ is the reference or initial pressure and $\phi_r$ is the porosity at the reference pressure. The absolute permeability $K$ of the porous media changes with the porosity according to the Kozeny-Carman equation:
\begin{eqnarray}\label{equ_permeability}
   K =   K_0 \frac{\phi^3(1-\phi_r)^2}{\phi_r^3(1-\phi)^2},
\end{eqnarray}
where $K_0$ is the initial intrinsic permeability. The rock compressibility caused by the pore fluid pressure has been recognized as an important factor influencing many subsurface processes which include the oil/gas production and the geological stability. For the modeling of the changes of rock properties, one approach is to use the Biot-type model for the rock, and another one is the rock compressibility model as (\ref{compress_phi}). A thermodynamically consistent model for the incompressible and immiscible two-phase flow in porous media with rock compressibility was developed in \cite{KouWChenS}. We can rebuild the thermodynamically consistent model for the multi-phase case by the Onsager principle in the following.

Now, let's introduce the rock free energy denoted by $R$. We assume that the work done by the effective pore fluid pressure exerted on rocks is transferred to the rock free energy. The variation of rock free energy with respect to effective pressure is described as:
\begin{eqnarray*}
  dR= p_e  d \phi   .
\end{eqnarray*}
The total free energy is given by  $$
\mathcal{E} (\tmmathbf{u},\phi) = \int_{\Omega} \phi F
(\tmmathbf{u}) \tmop{d\tmmathbf{x}} + \int_\Omega R \tmop{d\tmmathbf{x}},$$
where $F(\tmmathbf{u})$ is given as Example 6. The time derivative of the energy, $\dot{\mathcal{E}} (\tmmathbf{u};\partial_t{\tmmathbf{u}},\phi;\partial_t{\phi})$, is expressed as
\[
\dot{\mathcal{E}} (\tmmathbf{u};\partial_t{\tmmathbf{u}},\phi;\partial_t{\phi}) = \int_{\Omega} \partial_t ( \phi  F
(\tmmathbf{u}) ) \tmop{d\tmmathbf{x}} + \int_\Omega p_e \partial_t{\phi} \tmop{d\tmmathbf{x}}.
\]
The energy dissipation is similar to that in Example 6.

The unknown functions satisfy the conservation laws:
\[
\partial_t (\phi u_i) + \nabla \cdot \tmmathbf{v}_i = 0, \quad i = 1, \ldots, s.
\]
The saturation of each phase satisfies the saturation constraint $\sum^s_{i=1}u_i=1$. By the conservation law, we also have
\[
\sum^s_{i=1}u_i \partial_t{\phi} + \sum^s_{i=1} \nabla \cdot \tmmathbf{v}_i = 0.
\]
If the above equation holds true, we can directly obtain the saturation constraint  $\sum^s_{i=1}u_i=1$ by the conservation law and the initial saturation constraint $\sum^s_{i=1}u_i (0,x)=1$.

The dynamic equation can be derived by minimizing the Rayleighian functional as follows:
\begin{eqnarray*}
  \min_{\partial_t{\tmmathbf{u}},  \partial_t{\phi}, \tmmathbf{v}} &  & \mathcal{R} (\tmmathbf{u} ;
  \partial_t{\tmmathbf{u}}, \phi; \partial_t{\phi}, \tmmathbf{v})\\
  s.t. & & \partial_t (\phi u_i) + \nabla \cdot \tmmathbf{v}_i = 0, \quad i = 1, \ldots, s, \\
 & & \sum^s_{i=1}u_i \partial_t{\phi} + \sum^s_{i=1} \nabla \cdot\tmmathbf{v}_i = 0.
\end{eqnarray*}
Introducing $\mu = (\mu_1,\ldots,\mu_s)$ and $p$ as the Lagrangian multipliers, we set
\begin{eqnarray*}
\mathcal{R}_L (\tmmathbf{u} ;
  \partial_t{\tmmathbf{u}}, \phi; \partial_t{\phi}, \tmmathbf{v},\mu,p) &=&  \mathcal{R}(\tmmathbf{u} ;
  \partial_t{\tmmathbf{u}}, \phi; \partial_t{\phi}, \tmmathbf{v})  - \sum^s_{i=1}\int_\Omega \mu_i(\partial_t(\phi u_i) + \nabla \cdot \tmmathbf{v}_i ) \tmop{d\tmmathbf{x}} \\
  && -  \int_\Omega p \left( \sum^s_{i=1}u_i \partial_t{\phi} + \sum^s_{i=1} \nabla \cdot\tmmathbf{v}_i  \right) \tmop{d\tmmathbf{x}}.
\end{eqnarray*}
By the Euler-Lagrange equation, (\ref{phi_pe}) and (\ref{equ_permeability}), we have 
\begin{eqnarray}
  \partial_t (\phi u_i)+ \nabla \cdot \tmmathbf{v}_i & = & 0, \quad {i=1,\ldots,s}, \quad \tmop{in} \ 
  \Omega, \label{pm_rock_1}\\
  \tmmathbf{v_i} & = & - K_i (\nabla \mu_i + \nabla p),\quad {i=1,\ldots,s},  \quad \tmop{in} \ 
  \Omega,\\
  \mu_i & = & \frac{\partial F}{\partial u_i}, \quad {i=1,\ldots,s}, \quad \tmop{in} \ \Omega,\\
  \sum_{i = 1}^s u_i & = & 1, \qquad \tmop{in} 
  \ \Omega,\\
  p_e & = & p + \sum^s_{i=1} u_i \mu_i - F(\tmmathbf{u}), \quad \tmop{in} \ 
  \Omega,\\
  \phi &=& \phi_r e^{\gamma(p_e-p_r)},\quad \tmop{in} \ 
  \Omega,\\
  K &=&   K_0 \frac{\phi^3(1-\phi_r)^2}{\phi_r^3(1-\phi)^2}, \quad \tmop{in} \ \Omega.\label{pm_rock_7}
\end{eqnarray}
Here $K_i = \lambda_i K$, and $\lambda_i = \frac{k_{ri}(\tmmathbf{u})}{\eta_i}$, where $k_{ri}$ and $\eta_i$ are the relative permeability and viscosity of the phase $i$.

\section{Time-discretization  based on the Onsager principle}
In this section, we present the time discretization of partial differential equations using the Onsager principle. We first introduce the main idea for the abstract problems and then apply them to some typical examples.
  
We discretize the time interval $[0, T]$ by considering $0 = t_0 < t_1 <
\cdots < t_N = T$. Suppose that we already computed the solution at time
$t_k$. We will compute $\tmmathbf{u}^{k + 1}$ by employing a discrete version of
the Onsager principle.

\subsection{Discretization for systems with nonconserved parameters}

Suppose $\tmmathbf{u}^k$ is known with a time step $\tau = t_{k + 1} - t_k$. We discretize
the Rayleighean functional as follows,
\begin{equation} \label{e:discRayleign0}
\mathcal{R}_{\tau}^k (\tmmathbf{u}^k ; \tmmathbf{u}) \assign D_{\tau}
   \mathcal{E} (\tmmathbf{u}^k ; \tmmathbf{u}) + \Phi^k_{\tau} (\tmmathbf{u}^k,
   D_{\tau} \tmmathbf{u}^k), 
\end{equation}
where
\begin{eqnarray}
  \Phi^k_{\tau} (\tmmathbf{u}, D_{\tau} \tmmathbf{u}^k) & \assign & \Phi
  \left( \tmmathbf{u}^k ; \frac{\tmmathbf{u} - \tmmathbf{u}^k}{\tau} \right),
  \label{e:discDissp0}\\
  D_{\tau} \mathcal{E} (\tmmathbf{u}^k ; \tmmathbf{u}) & \assign &
  \frac{\mathcal{E} (\tmmathbf{u}) - \mathcal{E} (\tmmathbf{u}^k)}{\tau} .
   \label{e:discEng0}
\end{eqnarray}
Then the unknowns $\tmmathbf{u}^{k + 1}$ at time $t_{k + 1}$ are computed by
solving the minimization problem,
\[ \tmmathbf{u}^{k + 1} = \tmop{argmin}_{\tmmathbf{u}} \mathcal{R}_{\tau}^k
   (\tmmathbf{u}^k ; \tmmathbf{u}). 
   \]
Or equivalently,
\begin{equation} \label{e:JKO}
\tmmathbf{u}^{k + 1} = \tmop{argmin}_{\tmmathbf{u}} \mathcal{E}
   (\tmmathbf{u}) + \frac{1}{\tau} \Phi (\tmmathbf{u}^k ; \tmmathbf{u} -
   \tmmathbf{u}^k) . 
\end{equation}
Here we have used the fact that $\Phi (\tmmathbf{u} ; \partial_t {\tmmathbf{u}})$ is
a positive definite quadratic form of $\partial_t{\tmmathbf{u}}$. We can see that
$\tmmathbf{u}^{k + 1}$ can be seen as a generalized minimizing movement
solution. It is also referred to as a JKO scheme in the literature \cite{JKO1998}.

The scheme is unconditionally stable by definition. The following {claim}
is easy to verify. Denoted by
$J_{\tau}$ the set of minimizers of the above problem~\eqref{e:JKO}.
\begin{claim}
  When $J_{\tau}$ is not empty, for any choice $u^{k + 1} \in J_{\tau}$, we
  have that
  \[ \mathcal{E} (\tmmathbf{u}^{k + 1}) \leq \mathcal{E} (\tmmathbf{u}^k) -
     \frac{1}{\tau} \Phi (\tmmathbf{u}^{k} ; \tmmathbf{u}^{k + 1} -
     \tmmathbf{u}^k) \leq \mathcal{E} (\tmmathbf{u}^k) . \]
\end{claim}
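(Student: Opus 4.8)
The plan is to exploit the single defining feature of $J_{\tau}$: by~\eqref{e:JKO}, any $\tmmathbf{u}^{k+1} \in J_{\tau}$ is a global minimizer of the functional $\mathcal{E}(\tmmathbf{u}) + \frac{1}{\tau}\Phi(\tmmathbf{u}^k; \tmmathbf{u} - \tmmathbf{u}^k)$. The whole argument then reduces to comparing the value of this functional at the minimizer $\tmmathbf{u}^{k+1}$ against its value at the particular admissible competitor $\tmmathbf{u} = \tmmathbf{u}^k$, after which the two structural properties of $\Phi$ recorded in~\eqref{phi} finish the job. No convexity, uniqueness, or regularity of $\mathcal{E}$ enters; this is precisely why the claim is stated for an arbitrary element of $J_{\tau}$ under the sole hypothesis $J_{\tau} \neq \emptyset$.

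For the first inequality I would write the minimizing comparison directly. Since $\tmmathbf{u}^{k+1}$ minimizes the functional and $\tmmathbf{u}^k$ is a valid competitor,
$$\mathcal{E}(\tmmathbf{u}^{k+1}) + \frac{1}{\tau}\Phi(\tmmathbf{u}^k; \tmmathbf{u}^{k+1} - \tmmathbf{u}^k) \leq \mathcal{E}(\tmmathbf{u}^k) + \frac{1}{\tau}\Phi(\tmmathbf{u}^k; \tmmathbf{u}^k - \tmmathbf{u}^k).$$
Next I would observe that the dissipation term on the right vanishes: setting $\lambda = 0$ in the homogeneity relation $\Phi(\tmmathbf{u}; \lambda \tmmathbf{v}) = \lambda^2 \Phi(\tmmathbf{u}; \tmmathbf{v})$ gives $\Phi(\tmmathbf{u}^k; 0) = 0$, so the competitor $\tmmathbf{u}^k$ contributes only its energy. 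Rearranging the displayed inequality yields exactly $\mathcal{E}(\tmmathbf{u}^{k+1}) \leq \mathcal{E}(\tmmathbf{u}^k) - \frac{1}{\tau}\Phi(\tmmathbf{u}^k; \tmmathbf{u}^{k+1} - \tmmathbf{u}^k)$, which is the left-hand bound in the claim.

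The second inequality is then immediate from the non-negativity clause $\Phi(\tmmathbf{u}; \tmmathbf{v}) \geq 0$ in~\eqref{phi}: for $\tau > 0$ the subtracted quantity $\frac{1}{\tau}\Phi(\tmmathbf{u}^k; \tmmathbf{u}^{k+1} - \tmmathbf{u}^k)$ is non-negative, so discarding it can only raise the right-hand side, giving $\mathcal{E}(\tmmathbf{u}^k) - \frac{1}{\tau}\Phi(\tmmathbf{u}^k; \tmmathbf{u}^{k+1} - \tmmathbf{u}^k) \leq \mathcal{E}(\tmmathbf{u}^k)$. There is essentially no genuine obstacle to overcome; the only points deserving care are that the vanishing of $\Phi$ at the zero argument is used through homogeneity (rather than assumed separately) and that the monotone energy decay follows purely from the quadratic, sign-definite structure of the dissipation functional, independent of any further assumption on $\mathcal{E}$.
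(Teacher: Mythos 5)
Your proposal is correct and follows essentially the same argument as the paper: compare the value of the minimized functional at $\tmmathbf{u}^{k+1}$ against the competitor $\tmmathbf{u}^k$, use $\Phi(\tmmathbf{u}^k;\tmmathbf{0})=0$ (which the paper attributes to the quadratic structure of $\Phi$, exactly your homogeneity observation with $\lambda=0$), and invoke non-negativity of $\Phi$ for the second inequality. Your write-up is, if anything, slightly more explicit than the paper's about where each structural property from~\eqref{phi} is used.
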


\begin{proof}
  By definition, we have
  \[ \mathcal{E} (\tmmathbf{u}^{k + 1}) + \frac{1}{\tau} \Phi (\tmmathbf{u}^{k
     } ; \tmmathbf{u}^{k + 1} - \tmmathbf{u}^k) \leq \mathcal{E}
     (\tmmathbf{u}^k) + \frac{1}{\tau} \Phi (\tmmathbf{u}^k ; \tmmathbf{0}) =
     \mathcal{E} (\tmmathbf{u}^k) . \]
  Here we have used the fact that $\Phi (\tmmathbf{u} ; \tmmathbf{j})$ is a
  positive definite quadratic form with respect to $\tmmathbf{j}$.
  
  \ 
\end{proof}

\subsection{Discretization for systems with conserved parameters \ }
When the parameters in a physical system are conserved, we can discretize the
system similarly. Suppose, again,  that $\tmmathbf{u}^k$ is already known and we will compute
$\tmmathbf{u}^{k + 1}$ by using the discrete Onsager principle. We first
discretize the conservation equations as follows:
\begin{equation}\label{e:distConserv}
\frac{u_i - u_i^k}{\tau} + \nabla \cdot \tmmathbf{j}_i = 0, \end{equation}
where $\tmmathbf{j}_i$ is independent of time and satisfies $\tmmathbf{j}_i
\cdot \tmmathbf{n} = 0$ on $\partial \Omega$. Then,  the energy functional and the
dissipation function are discretized as follows:
\begin{eqnarray*}
  \Phi^k_{\tau} (\tmmathbf{u}^k, \tmmathbf{j}) & \assign & \Phi (\tmmathbf{u}^k ;
  \tmmathbf{j}),\\
  D_{\tau} \mathcal{E} (\tmmathbf{u}^k ; \tmmathbf{u}) & \assign &
  \frac{\mathcal{E} (\tmmathbf{u}) - \mathcal{E} (\tmmathbf{u}^k)}{\tau} .
\end{eqnarray*}
The $\tmmathbf{u}^{k + 1}$ and $\tmmathbf{j}^{k + 1}$ are obtained by
minimizing the discrete Rayleighean functional as follows,
\begin{eqnarray}
  (\tmmathbf{u}^{k + 1}, \tmmathbf{j}^{k + 1}) = \tmop{argmin}_{\tmmathbf{u},
  \tmmathbf{j}} \mathcal{R}_{\tau}^k (\tmmathbf{u}, \tmmathbf{j}) & \assign &
  \Phi^k_{\tau} (\tmmathbf{u}^k, \tmmathbf{j}) + D_{\tau} \mathcal{E}
  (\tmmathbf{u}^k ; \tmmathbf{u}) \label{e:discOnsCsv1}\\
  s.t. &  & \frac{u_i - u_i^k}{\tau} + \nabla \cdot \tmmathbf{j}_i = 0, \quad
  i = 1, \ldots, s. \nonumber
\end{eqnarray}
By introduce a variable $\tmmathbf{m} = \tau \tmmathbf{j},$
the problem is equivalent to
\begin{align}
  (\tmmathbf{u}^{k + 1}, \tmmathbf{m}^{k + 1}) = \tmop{argmin}_{\tmmathbf{u},
  \tmmathbf{m}} &  \mathcal{E} (\tmmathbf{u}) + \frac{1}{\tau} \Phi^k_{\tau}
  (\tmmathbf{u}^k, \tmmathbf{m}),\label{e:discOnsCsv2}\\
  s.t. &\ u_i - u_i^k + \nabla \cdot \tmmathbf{m}_i = 0.\nonumber
\end{align}
Here we again used the fact that $\Phi (\tmmathbf{u}^k ; \tmmathbf{j})$ is a
quadratic form with respect to $\tmmathbf{j}$.

\begin{proposition}
  Suppose $(\tmmathbf{u}^{k + 1}, \tmmathbf{m}^{k + 1})$ is a minimizer of the
  minimizing problem~\eqref{e:discOnsCsv2}, we then have
  \[ \mathcal{E} (\tmmathbf{u}^{k + 1}) \leq \mathcal{E} (\tmmathbf{u}^k) -
     \frac{1}{\tau} \Phi (\tmmathbf{u}^{k} ; \tmmathbf{m}^{k + 1}) \leq
     \mathcal{E} (\tmmathbf{u}^k), \]
  and $\tmmathbf{u}^{k + 1} $satisfies the mass conservation equation that
  \[ \int_{\Omega} u_i^{k + 1} \tmop{d\tmmathbf{x}} = \int_{\Omega} u_i^k \tmop{d\tmmathbf{x}},
     \quad i = 1, \ldots s. \]
\end{proposition}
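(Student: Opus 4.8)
The plan is to follow the same short argument used for the preceding Claim, splitting the proof into the energy-dissipation estimate and the mass-conservation identity, which can be handled independently of one another.

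For the energy inequality, the key observation is that the pair $(\tmmathbf{u}^k, \tmmathbf{0})$ is an admissible competitor for the constrained problem~\eqref{e:discOnsCsv2}: substituting $\tmmathbf{u} = \tmmathbf{u}^k$ and $\tmmathbf{m} = \tmmathbf{0}$ into the constraint gives $u_i^k - u_i^k + \nabla \cdot \tmmathbf{0} = 0$ for each $i$, so every constraint is met. Since $(\tmmathbf{u}^{k+1}, \tmmathbf{m}^{k+1})$ is a minimizer, its objective value cannot exceed that of this competitor, which yields
\[ \mathcal{E}(\tmmathbf{u}^{k+1}) + \frac{1}{\tau}\Phi(\tmmathbf{u}^k; \tmmathbf{m}^{k+1}) \leq \mathcal{E}(\tmmathbf{u}^k) + \frac{1}{\tau}\Phi(\tmmathbf{u}^k; \tmmathbf{0}). \]
The homogeneity $\Phi(\tmmathbf{u}^k; \lambda \tmmathbf{j}) = \lambda^2 \Phi(\tmmathbf{u}^k; \tmmathbf{j})$ taken at $\lambda = 0$ forces $\Phi(\tmmathbf{u}^k; \tmmathbf{0}) = 0$, so after rearranging I obtain the first claimed inequality. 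The second inequality is then immediate from the nonnegativity $\Phi(\tmmathbf{u}^k; \tmmathbf{m}^{k+1}) \geq 0$.

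For the mass-conservation identity, I would integrate the discrete constraint $u_i^{k+1} - u_i^k + \nabla \cdot \tmmathbf{m}_i^{k+1} = 0$ over $\Omega$. The divergence theorem converts $\int_{\Omega} \nabla \cdot \tmmathbf{m}_i^{k+1} \tmop{d\tmmathbf{x}}$ into the boundary flux $\int_{\partial \Omega} \tmmathbf{m}_i^{k+1} \cdot \tmmathbf{n} \tmop{dS}$, which vanishes because $\tmmathbf{m}_i = \tau \tmmathbf{j}_i$ and $\tmmathbf{j}_i \cdot \tmmathbf{n} = 0$ on $\partial \Omega$. What then remains is $\int_{\Omega} (u_i^{k+1} - u_i^k) \tmop{d\tmmathbf{x}} = 0$, which is precisely the stated conservation law.

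Neither step presents a genuine obstacle; the entire content reduces to the choice of the admissible competitor $(\tmmathbf{u}^k, \tmmathbf{0})$ together with the two structural facts that $\Phi$ is a nonnegative quadratic form vanishing at the origin and that the no-flux boundary condition annihilates the divergence integral. The only points requiring a moment of care are verifying admissibility of the competitor and the vanishing of the boundary term; everything else is routine bookkeeping.
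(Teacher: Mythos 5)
Your proposal is correct and follows essentially the same argument as the paper's own proof: compare the minimizer against the admissible competitor $(\tmmathbf{u}^k,\tmmathbf{0})$, use that $\Phi$ is a nonnegative quadratic form vanishing at $\tmmathbf{0}$, and integrate the constraint with the no-flux boundary condition $\tmmathbf{m}^{k+1}\cdot\tmmathbf{n}=\tau\tmmathbf{j}^{k+1}\cdot\tmmathbf{n}=0$ to get mass conservation. Your write-up is in fact slightly more careful than the paper's, since you explicitly verify the competitor's admissibility and justify $\Phi(\tmmathbf{u}^k;\tmmathbf{0})=0$ via homogeneity, steps the paper leaves implicit.
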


\begin{proof}
  The proof of the energy inequality is similar to that in Proposition 1. By
  definition, we have
  \[ \mathcal{E} (\tmmathbf{u}^{k + 1}) + \frac{1}{\tau} \Phi (\tmmathbf{u}^{k
     } ; \tmmathbf{m}^{k + 1}) \leq \mathcal{E} (\tmmathbf{u}^k) +
     \frac{1}{\tau} \Phi (\tmmathbf{u}^k ; \tmmathbf{0}) = \mathcal{E}
     (\tmmathbf{u}^k) . \]
  Here we have used the fact that $\Phi (\tmmathbf{u} ; \tmmathbf{j})$ is a
  positive definite quadratic form with respect to $\tmmathbf{j}$. The mass
  conservation equation is obtained by integrate the constraint
  \[ \int_{\Omega} u_i^{k + 1} \tmop{d\tmmathbf{x}} = \int_{\Omega} u_i^k \tmop{d\tmmathbf{x}} +
     \int_{\Omega} \nabla \cdot \tmmathbf{m}^{k + 1} \tmop{d\tmmathbf{x}} = \int_{\Omega}
     u_i^k \tmop{d\tmmathbf{x}} + \int_{\partial \Omega} \tmmathbf{m}^{k + 1} \cdot
     \tmmathbf{n} \tmop{d\tmmathbf{x}} = \int_{\Omega} u_i^k \tmop{d\tmmathbf{x}}, \]
  where we have used the assumption that $\tmmathbf{} \tmmathbf{m}^{k + 1}
  \cdot \tmmathbf{n} = \tau \tmmathbf{j}^{k + 1} \cdot \tmmathbf{n}=0 .$
  
  \ 
\end{proof}

\begin{remark}
Notice that we derive the abstract discrete schemes under the no flux boundary condition. For more general conditions, 
we can also derive the corresponding schemes in a similar approach.
For example, if we consider periodic conditions, all the above derivations are same as that for no flux boundary conditions.
\end{remark}
\begin{remark}
{We believe that the present method can be generalized to higher orders  by using higher order approximation (e.g., the k-th order BDF scheme) for the dissipation functional $\Phi$ and the energy change rate $\dot{\mathcal{E}}$. However, this direct generalization may compromise the desirable property of energy stability. A more refined approach to  generalizing this work to high orders will be addressed in future research.}
\end{remark}
\subsection{Time discretization by examples}

{\tmstrong{Example 1}}: The Allen-Cahn equation.

Consider the Allen-Cahn equation described in Example 1 of  Section \ref{AE}, we apply the Onsager principle to obtain the following time discretization: 
\begin{eqnarray*}
  \tmmathbf{u}^{k + 1} & = & \tmop{argmin}_{u\in H^1(\Omega)} \left\{ \mathcal{E}(u) +\frac{1}{\tau}
  \Phi(u-u^k)\right\},\\
\end{eqnarray*}
%in the previous equation. 
where \begin{eqnarray*}
  \mathcal{E} (u) & = &  \int_{\Omega} \frac{\alpha}{2} | \nabla u |^2 + F (u) \tmop{d\tmmathbf{x}} + \int_{\partial \Omega} \gamma (u) \tmop{dS},\\
  \Phi (v) & = & \frac{1}{2} \int_{\Omega} |v|^2 \tmop{d\tmmathbf{x}} +
  \frac{\xi_1 }{2} \int_{\partial \Omega} |v|^2 \tmop{dS}.
\end{eqnarray*}
This is a constraint minimization problem.
When $\tau$ is small, the minimizing problem may have a unique minimizer,
which can be obtained by the backward Euler scheme: 
\begin{eqnarray*}
  & \frac{u^{k + 1} - u^k}{\tau}  =  \alpha \Delta u^{k + 1} -
  F'(u^{k + 1}) \quad \tmop{in} \  \Omega,\\
  & \xi_1\frac{u^{k+1} - u^k}{\tau} + \alpha \partial_n u^{k+1}+\gamma'(u^{k+1})= 0 \quad \text{on}\ \partial \Omega.
\end{eqnarray*}
 In a simple setting with $\gamma(u)=const, \xi_1=0$,  and subject to periodic boundary conditions, we 
are let to
\[ u^{k + 1} = \tmop{argmin}_{u \in H^1 (\Omega)}  \int_{\Omega} \frac{(u -
   u^k)^2}{2 \tau} + \frac{\alpha}{2} | \nabla u |^2 + F(u) \tmop{d\tmmathbf{x}}. \]
If $F (u) = \frac{(1 - u^2)^2}{4}$, we have $f (u) = F' (u) = u^3 - u$.

{\tmstrong{Example 2}}: The Cahn-Hilliard equation.

Consider the Cahn-Hilliard equation described in Example 2 of  Section \ref{AE}, we introduce a space
\[ \tmmathbf{V} \assign \{ \tmmathbf{v} : \Omega \rightarrow \mathbb{R}^m :
   \tmmathbf{v} \in H (\tmop{div}, \Omega), \tmmathbf{v} \cdot \tmmathbf{n} =
   0 \ \tmop{on} \  \partial \Omega \} . \]
According to the Onsager principle, we obtain our time-discrete variational formulation of the Cahn-Hilliard equation: $(u^{k + 1}, \tmmathbf{j}^{k + 1})$ is obtained by solving the following constraint optimization problem: 
\begin{align*}
  %(u^{k + 1}, %\tmmathbf{j}^{k + 1}) %\in  
  \tmop{min}_{u \in H^1 (\Omega),
  \tmmathbf{j} \in \tmmathbf{V}} &   \left\{ \frac{\tau}{2} \int_{\Omega} 
  \tmmathbf{j}^\top M^{-1}(u^k)\tmmathbf{j}  \tmop{d\tmmathbf{x}} + 
  \frac{\xi}{2\tau} \int_{\partial \Omega}
  |u-u^k|^2 dS +
  \int_{\Omega} \frac{\alpha}{2} | \nabla u |^2 +
  F (u) \tmop{d\tmmathbf{x}} +\int_{\partial \Omega}\gamma(u)dS\right\}, \\
  s.t. \quad &  \frac{u - u^k}{\tau} + \nabla \cdot \tmmathbf{j} = 0 \; \text{in}\; \Omega. %\\
  % \quad & \xi \frac{u - u^k}{\tau} + \alpha \partial_n u+\gamma'(u)= 0 \quad \text{on}\; \partial \Omega.
\end{align*}
Here $\tau$ is the time step, with initial data $u^0$. When $\tau$ is small and $u$ has good regularity, the minimizer of the problem may be unique and satisfies the Euler-Lagrange equation.
\begin{eqnarray*}
  \frac{u-u^k}{\tau} & = & \nabla \cdot (M (u^k) \nabla \mu), \quad \
  \tmop{in} \ \Omega,\\
  \mu & = & \alpha \Delta u - F'(u), \qquad \tmop{in} \ \Omega,\\
 M (u^k) \nabla \mu \cdot\tmmathbf{n} & = & 0, \qquad \hspace{5em} \ \tmop{on} \ \partial \Omega,\\
  \xi \frac{u-u^k}{\tau} & = & - (\partial_n u + \gamma'  (u)), \quad \, \tmop{on} \ \partial
  \Omega .
\end{eqnarray*}
This is an implicit time discretizaiton of the Cahn-Hilliard equaiton. 
\iffalse 
\begin{align*}
  (u^{k + 1}, \tmmathbf{j}^{k + 1}) = \tmop{argmin}_{u \in H^1 (\Omega),
  \tmmathbf{j} \in \tmmathbf{V}} &   \frac{\tau}{2} \int_{\Omega} |
  \tmmathbf{j} |^2 \tmop{d\tmmathbf{x}} + \int_{\Omega} \frac{\alpha}{2} | \nabla u |^2 +
  F (u) \tmop{d\tmmathbf{x}}\\
  s.t. &  \frac{u - u^n}{\tau} + \nabla \cdot \tmmathbf{j} = 0 .
\end{align*}
When $\tau$ is small and $u$ has good regularity, the minimizer of the problem
is unique and satisfies the Euler-Lagrange equation. By introducing a Lagrangian
multiplies $\mu$, the minimizers of the above problem satisfies that
\begin{eqnarray*}
  \tmmathbf{j}^{k + 1} & = & \nabla \mu^{k + 1},\\
  \mu^{k + 1} & = & \alpha \Delta u^{k + 1} + f (u^{k + 1}) .
\end{eqnarray*}
This leads to {\tmem{the backward Euler scheme}}
\begin{eqnarray*}
  \frac{u^{k + 1} - u^k}{\tau}  & = & \Delta \mu^{k + 1}, \quad \tmop{in}
  \Omega,\\
  \mu^{k + 1} & = & \alpha \Delta u^{k + 1} + f (u^{k + 1})  \quad \tmop{in}
  \Omega.
\end{eqnarray*}
\fi 

{\tmstrong{Example 3}}: The Fokker-Planck equation.

For the Fokker-Planck equation, the scheme \eqref{e:discOnsCsv2} is reduced to
\begin{align*}
  (u^{k + 1}, \tmmathbf{m}^{k + 1}) = \tmop{argmin}_{u \in L^1_+ (\Omega),
  \tmmathbf{m} \in \tmmathbf{V}} &  \left\{ \frac{1}{2  \tau} \int_{\Omega} \frac{| \tmmathbf{m}
  |^2}{u^k} \tmop{d\tmmathbf{x}} +\int_{\Omega} \beta^{-1} u \log u + u U (\tmmathbf{x})
  \tmop{d\tmmathbf{x}} \right\}, \\
  s.t. & \ u - u^k + \nabla \cdot \tmmathbf{m} = 0.
\end{align*}
Under appropriate  conditions on $U$, one can verify that the underlying functional is convex with linear constraints so that there exists a unique
minimizer. We could also prove that $u^{k + 1} > 0$, i.e. the scheme is positive preserving.
This discrete formulation serves as a natural approximation to the celebrated {\color{black}Jordan–Kinderlehrer–Otto (JKO)} scheme: 
\begin{align*}
& \text{Determine} \;  u^{k+1} \; \text{that minimizes} \; \left\{ \frac{1}{2\tau}W_2^2(u, u^k)+  \int_{\Omega} \beta^{-1} u \log u + u U (\tmmathbf{x})
  \tmop{d\tmmathbf{x}} \right\},
\end{align*}
where $W_2$ is the 2-Wasserstein distance \cite{JKO1998}.  That is 
$$
W^2(a,b)\sim \inf_{m\in V} \left\{ \int_{\Omega} \frac{|m|^2}{a}\tmop{d\tmmathbf{x}}, \quad b-a+\nabla\cdot m=0 \right\}. 
$$
In fact, such as an approximation is even more clear when comparing with 
\[
W^2(a, b)=\inf_{m, \rho} \left\{ \int_0^1 \int_{\Omega} \rho |v|^2\tmop{d\tmmathbf{x}}\tmop{ds}, \quad \partial_s \rho+\nabla\cdot (\rho v)=0, \; \rho(0)=a, \quad \rho(1)=b \right\}. 
\]
This, called the Benamou–Brenier formula \cite{BB00}, establishes a tight connection between absolutely continuous curves in the probability density space with Wasserstein metric and solutions to the continuity equation. \\

{\tmstrong{Example 4}}: The Plank-Nernst-Poisson equations.

For the Plank-Nernst-Poisson, we have
\begin{align*}
  (\tmmathbf{u}^{k + 1}, \tmmathbf{m}^{k + 1}) = \tmop{argmin}_{\tmmathbf{u}
  \in (L^2 (\Omega))^s, \tmmathbf{m} \in \tmmathbf{V}^s} & \left\{ 
\mathcal{E} (\tmmathbf{u})
+\frac{1}{2\tau}\int_{\Omega}( D_i(\tmmathbf{x})u_i^k)^{-1}|\tmmathbf{m}_i|^2 \tmop{d\tmmathbf{x}}
  \right\} \\ 
  %  \int_{\Omega}
  %\sum_{i = 1}^s u_i \log u_i + z_i u_i \varphi \tmop{d\tmmathbf{x}} + \frac{1}{2 \tau}
  %\int_{\Omega} \sum_{i = 1}^s \frac{| \tmmathbf{m}_i |^2}{u_i} \tmop{d\tmmathbf{x}}\\
  s.t. &\   u_i - u_i^k + \nabla \cdot \tmmathbf{m}_i = 0, \quad i = 1,
  \ldots, s.\\
  &  - \nabla \cdot (\varepsilon_0 \nabla \varphi) = f(\tmmathbf{x})+ \sum_{i = 1}^s z_i u_i,
\end{align*}
where 
\[ \mathcal{E} (\tmmathbf{u}) = \int_{\Omega} \sum^s_{i = 1} u_i \log u_i +
   \frac{\varepsilon_0}{2} | \nabla \varphi (\tmmathbf{x}) |^2 \tmop{d\tmmathbf{x}}. \]
This optimization problem, with two linear constraints, has been derived in \cite{LM23} in several steps by approximating a dynamical formulation of the JKO type scheme \cite{JKO1998}. Refer to \cite{LM23} for further details on an alternative derivation, along with other formulations with different boundary conditions for $\varphi$.   

{\tmstrong{Example 5}}: The Maxwell-Stefan problem.

For the Maxwell-Stefan problem, we are led to
\begin{align*}
  (\tmmathbf{u}^{k + 1}, \tmmathbf{m}^{k + 1}) = \tmop{argmin}_{\tmmathbf{u}
  \in (L^2 (\Omega))^s, \tmmathbf{m} \in \tmmathbf{V}^s} &  
\left\{  \int_{\Omega}
  \sum_{i = 1}^s u_i \log u_i \tmop{d\tmmathbf{x}} + \frac{1}{4 \tau} \int_{\Omega}
  \sum_{i, j = 1}^s b_{i j} u_i^k u_j^k \left| \frac{\tmmathbf{m}_i}{u_i^k} -
  \frac{\tmmathbf{m}_j}{u_j^k} \right|^2 \tmop{d\tmmathbf{x}} \right\},  \\
  s.t. & \ u_i - u_i^k + \nabla \cdot \tmmathbf{m}_i = 0, \quad i = 1,
  \ldots, s; \quad \text{and}\; \sum_{i=1}^s \nabla \cdot \tmmathbf{m}_i=0.  
\end{align*}
Such constraint minimization differs yet similar to  that discussed in \cite{HLTW21}, in which the authors formulate an optimization problem for interpreting the implicit-explicit scheme to the  Maxwell-Stefan problem. 

Again our variational scheme is related to the JKO scheme \cite{JKO1998}, an analogy due to the connection between frictional dissipation and the Wasserstein distance offered by the Benamou--Brenier interpretation \cite{BB00} of the Monge-Kantorovich mass transfer problem. There is however one important difference, as the frictional dissipation is more elaborate in the multi-component mixture situation. The minimizers of the above constraint problem can be calculated by considering the min-max augmented Lagrangian, which upon taking $\tau \tmmathbf{v}_i= \frac{\tmmathbf{m}_i}{u_i^k}$
gives 
\begin{align*}
\min_{u, \tmmathbf{v}}\max_{\alpha, \beta} L(u,\tmmathbf{v},p,\mu)
& =  \int_{\Omega} \sum_{i=1}^s u_i \log u_i + 
  \frac{\tau}{4} \sum_{i, j = 1}^s b_{i j} u_i^k u_j^k \left| \tmmathbf{v}_i -
  \tmmathbf{v}_j \right|^2  \tmop{d\tmmathbf{x}} \\
 & \qquad  - \int_{\Omega}  \tau p  \sum_{i=1}^s \nabla\cdot (\tmmathbf{v}_i u_i^k ) \tmop{d\tmmathbf{x}} 
  - \int_{\Omega}  \sum_{i=1}^s (\mu_i (u_i-u_i^k)
  - \tau \nabla\mu_i \cdot   \tmmathbf{v}_i u_i^k \tmop{d\tmmathbf{x}}. 
\end{align*} 
Computing the variational derivatives, which vanish at the saddle points, we obtain
\begin{align*}
& 1+\log u_i  -\mu_i =0\\
& \sum_{j = 1}^s b_{i j} u_j^k  (\tmmathbf{v}_i - \tmmathbf{v}_j) +\nabla \mu_i  = -\nabla p.    
\end{align*}
%From the first relation we have 
%$$
%\beta_i=1+\log u_i +O(\tau),
%$$
%which when 
We substitute the first relation into the second relation. This leads to 
an explicit-implicit time-discretization: 
\begin{eqnarray*}
   \frac{u_i-u_i^k}{\tau} + \nabla \cdot (u_i^k  \tmmathbf{v}_i) & = & 0, \qquad
  \tmop{in} \ \Omega,\\
  \sum_{j = 1}^s b_{i j} u_j^k  (\tmmathbf{v}_i - \tmmathbf{v}_j) + \nabla \log
  u_i & = & -\nabla p,
  \quad \tmop{in} \ \Omega,\\
\sum_{i=1}^s \nabla \cdot( u_i^k \tmmathbf{v}_i) & =& 0, \quad \tmop{in} \  \Omega.
%  \partial_n u_i & = & 0\qquad \tmop{on} \partial \Omega .
\end{eqnarray*}
By using the relation~\eqref{e:tempP}, this is equivalent to the explicit-implicit scheme introduced in \cite{HLTW21}: 
%The system above when $\tau \to 0$ can be shown to approach to the limit system: 
\begin{eqnarray*}
  \frac{u_i-u_i^k}{\tau} + \nabla \cdot (u_i^k   \tmmathbf{v}_i) & = & 0, \qquad
  \tmop{in} \  \Omega,\\
  \sum_{j = 1}^s b_{i j} u_j^k  (\tmmathbf{v}_i - \tmmathbf{v}_j) + \nabla \log
  u_i & = & \frac{1}{\sum_{j = 1}^s u_j^k} \sum_{j = 1}^s u_j^k \nabla \log u_j,
  \quad \tmop{in} \  \Omega,\\
  \sum_{i=1}^s  \nabla \cdot( u_i^k \tmmathbf{v}_i)  & = & 0, \quad \tmop{in} \  \Omega.
\end{eqnarray*}

{\tmstrong{Example 6}}: The incompressible and immiscible multi-phase flow in porous media.

For the multiphase flow in porous media, we can derive a discrete scheme
\begin{align*}
  (\tmmathbf{u}^{k + 1}, \tmmathbf{m}^{k + 1}) = \tmop{argmin}_{\tmmathbf{u}
  \in (L^2 (\Omega))^s, \tmmathbf{m} \in \tmmathbf{V}^s} &   \left\{ \int_{\Omega}
  \phi F (\tmmathbf{u}) d x + \frac{1}{\tau} \int_{\Omega} \sum_{i = 1}^s
  \frac{1}{2} \tmmathbf{m}_i^\top  K_i^{- 1} \tmmathbf{m}_i \tmop{d\tmmathbf{x}} \right\} \\
  s.t. &\   \phi (u_i - u_i^k) + \nabla \cdot \tmmathbf{m}_i = 0, \quad i = 1,
  \ldots, s.\\
  &   \sum_{i = 1}^s u_i = 1,
\end{align*}
where $\tmmathbf{m} = \tau \tmmathbf{v}$ and $K_i=K_i(\tmmathbf{u}^k)$. Here $\tau$ is the time step size, with initial data $\tmmathbf{u}^0$ satisfying $\sum^s_{i=1}{u}^0_i=1$. When $\tau$ is small, the minimizer of the above problem may be unique and satisfies the Euler-Lagrange equation which is indeed a first order explicit-implicit time discretizaiton of the system (\ref{e:Porous_a})-(\ref{e:bndPorous}) with the explicit value only for $K_i=K_i(\tmmathbf{u}^k)$.  By leveraging the definition of $F(\tmmathbf{u})$, the constraint $\sum^s_i u_i=1$, and optimization algorithms applied to the aforementioned problem, we observe that the approximation of $u_i$ inherently preserves bounds. Furthermore, the natural mass conservation for each phase is also ensured.

%{\color{red}  Q: what more can be said to relate to an existing scheme?}

{\tmstrong{Example 7}}: The incompressible and immiscible multi-phase flow in porous media with rock compressibility.  

For the multi-phase flow in porous media with rock compressibility, we can derive a discrete scheme as follows:
\begin{align*}
  (\phi^{k+1},\tmmathbf{u}^{k + 1}, \tmmathbf{m}^{k + 1}) = \tmop{argmin}_{\phi\in L^2(\Omega),\tmmathbf{u}
  \in (L^2 (\Omega))^s, \tmmathbf{m} \in \tmmathbf{V}^s} &   \left\{ \int_{\Omega}
  \phi F (\tmmathbf{u}) \tmop{d\tmmathbf{x}} +\int_\Omega p_e (\phi -\phi^k)\tmop{d\tmmathbf{x}} + \frac{1}{\tau} \int_{\Omega} \sum_{i = 1}^s
  \frac{1}{2} \tmmathbf{m}_i^\top  K_i^{- 1} \tmmathbf{m}_i  \tmop{d\tmmathbf{x}} \right\} \\
  s.t. &\   \phi u_i - \phi^ku_i^k+ \nabla \cdot \tmmathbf{m}_i = 0, \quad i = 1,
  \ldots, s,\\
  &   \sum_{i = 1}^s u_i (\phi-\phi^k) +\sum_{i = 1}^s \nabla \cdot \tmmathbf{m}_i = 0,
\end{align*}
where $\tmmathbf{m} = \tau \tmmathbf{v}$ and $K_i=\lambda_i(\tmmathbf{u}^k)K(\phi^k)$. The second constraint can also be written as $\sum^s_{i=1}u_i=1$. The initial data $\tmmathbf{u}^0$ satisfying $\sum^s_{i=1}{u}^0_i=1$. We apply $\phi = \phi_r e^{\gamma(p_e-p_r)}$ and $K =  K_0 \frac{\phi^3(1-\phi_r)^2}{\phi_r^3(1-\phi)^2}$ in the above optimization problem. Thus when $\tau$ is small, the minimizer of the above problem may be unique and satisfies the Euler-Lagrange equation which is a first order explicit-implicit time discretizaiton of the nonlinear system (\ref{pm_rock_1})-(\ref{pm_rock_7}) with the explicit value only for $K_i=\lambda_i(\tmmathbf{u}^k)K(\phi^k)$. 
 The preservation of bounds for $u_i$ and mass conservation for each phase are also maintained through the approximation of the aforementioned minimization problem.

%{\color{red}  Q: Add the discrete scheme and some descriptions}

\section{Numerical solutions}
The core idea of our numerical schemes is to discretize each semi-discrete variational formulation in space, and then apply an efficient algorithm to optimize the problem. It suffices to describe the discretizaiton of the first step, from the datum $\tmmathbf{u}^0$ to the minimizer $\tmmathbf{u}^1$.

\subsection{The discretized conserved problem} 
We consider only the general problem with mass conservation, 
\begin{align}
  (\tmmathbf{u}^{1}, \tmmathbf{m}^{1}) = \tmop{argmin}_{\tmmathbf{u}\in (L^2(\Omega))^s,
  \tmmathbf{m} \in \tmmathbf{V}^s} &  \left\{   \frac{1}{\tau} \Phi^0_{\tau}
  (\tmmathbf{u}, \tmmathbf{m})+\mathcal{E} (\tmmathbf{u}) \right\}, \\
  s.t. & \quad 
  u_i - u_i^0 -\nabla\cdot \tmmathbf{m}_i=0. 
  %\ \int_{\Omega} (u_i - u_i^0 -\nabla\cdot \tmmathbf{m}_i) v  dx = 0, \quad \forall v\in L^2(\Omega).\nonumber
\end{align}
We shall use either finite difference or finite element for spatial discretization depending on the domain setup.     

{\bf Finite difference}:\\
Let the domain be a box $\Omega=[0, 1]^d$, functions on $\Omega$ extend periodically. For simplicity, we explain the discretization in $d=1$ space dimensions, the translation to $d>1$ is straightforward but notationally cumbersome.  We use a cartesian grid with $N$ cells $I_j=[x_{j-1/2}, x_{j+1/2}]$, with uniform grid step $h=1/N$ and cell center $x_j=x_{j-1/2}+ 0.5h$, for $j=1,\ldots, N$. Functions $w$ are discretized by finite sequences $(w_j)_{j=1, \ldots, N}$ with $w_j\sim w(x_j)$. We define the difference operator by 
%The central finite difference quotient is devoded by$D_h$, i.e., 
$$
(D_h w)_{j+1/2}=\frac{w_{j+1}-w_{j}}{h}, \quad 
(d_h w)_j=\frac{w_{j+1/2}-w_{j-1/2}}{h}
$$
and average by 
$$
\hat w_j=\frac{w_{j+1/2}+w_{j-1/2}}{2}. 
$$
%Note that $D_h$ is anti-adjoint with respect the inner product, $-D_h^*=D_h$ is also the divergence.
With these notations, the variational scheme for FPE is now discretized as follows: 
\begin{align*} 
 & \min_{u, m} L_h(u, m):=  \left\{ \frac{h}{2 \tau} \sum_{j=1}^N \frac{|\hat m_j|^2}{u_j}  + 
 h \sum_{j=1}^N \beta^{-1} u_j \log u_j + u_j U (x_j)\right\}, \\
  s.t. & \qquad 
  u_j - u^0_j +(d_h m)_j=0. 
\end{align*}
For other application examples, the discrete objective function can be similarly obtained.

{\bf Finite element}: \\
We can also employ the finite element method for spatial discretization. 
%The approximation of $u_i$ is achieved through piecewise polynomial finite element functions, while the Raviart-Thomas finite element functions are utilized for the approximation of  $\tmmathbf{m}_i$.
Let $\mathcal{T}_h$  represent a regular triangulation of $\Omega$ with mesh size $h$. Let $U_h$ and $V_h$ are proper finite element spaces for $u_{h,i}$ and $\tmmathbf{m}_i$, respectively. Notice that $V_h$ is a finite element space for vector-valued functions.
%The $P_k$-finite element space and the Raviart-Thomas finite element space are respectively defined as follows:
%\begin{align}
%    U_h&:=\{u_h\in L^2(\Omega) : u_h|_K\in P_k, \forall K\in\mathcal{T}_h\},\\
%    V_h&:=\{\tmmathbf{m}_h \in \tmmathbf{V}: \tmmathbf{m}_h|_K \in (P_k)^n\oplus \tmmathbf{x} P_k,  \forall K\in\mathcal{T}_h\}.
%\end{align}
Then the fully discrete problem is defined as follows,
\begin{align}
  (\tmmathbf{u}_h^{1}, \tmmathbf{m}_h^{1}) = \tmop{argmin}_{\tmmathbf{u}_h\in (U_h)^s,
  \tmmathbf{m} \in (V_h)^s} &  \left\{ 
   \frac{1}{\tau} \Phi^0_{\tau}
  (\tmmathbf{u}_h, \tmmathbf{m}_h) +\mathcal{E} (\tmmathbf{u}_h) \right\}, \\
  s.t. &\ \int_{\Omega} (u_{h,i} - u_{h,i}^0 -\nabla\cdot \tmmathbf{m}_{h,i}) v_h  \tmop{d\tmmathbf{x}} = 0, \quad \forall v_h\in U_h.\nonumber
\end{align}
In applications, we can choose the finite element spaces  so that the fully discrete system is well-posed. 
%We use $U_h$ and $V_h$ with $k=0$ in the following numerical experiments. Actually, we can also use higher order finite element approximations in the above problem.

\subsection{Solution  by optimization  algorithms} 
In this section, we delve into numerical techniques for solving the constraint optimization problem denoted by (\ref{e:discOnsCsv2}).  Let $\theta=(u, m)$, then the problem takes the form 
\begin{align}\label{minp}
\min_{\theta} L_h(\theta), \quad \text{subject to} \quad  B\theta=b, 
\end{align}
where $L_h\in C^1(\mathbb{R}^n)$ is bounded below, 
the constraint set is the linear system corresponding to the discretized PDE constraints. $B\in M^{l\times n}$ is a matrix with $l \leq n$ and $b\in \mathbb{R}^l$ is a vector.  A straightforward method to tackle this constraint optimization is through the following update: 
\begin{equation}\label{eq:pre-gd}
    \theta_{k+1}=\theta_k -\eta G \nabla L_h(\theta_k),
\end{equation}
where the projection matrix $G$ is defined by 
$$
G=I-B^\top (BB^\top)^{-1}B,
$$
ensuring  
$B\theta_{k+1}=b$ if $B\theta_k=b$. 
Here,  $\eta$ is a step size (or learning rate), crucial for  the algorithm's convergence due to the gap between the continuous gradient flow equation and the discrete iteration. Typical choices for $\eta$ are either empirical schedules or damping techniques. 

For enhanced efficiency, we also employ the AEPG algorithm (adaptive energy-based preconditioned gradient descent) introduced in \cite{LNTY23}, which reads:
\begin{subequations}\label{aeng0}
\begin{align}
& v_k= G \nabla l(\theta_k),\\
& r_{k+1} =\frac{r_k}{1+2\eta \|v_k\|^2},\\
& \theta_{k+1}=\theta_k-2\eta r_{k+1}v_k,
\end{align}
\end{subequations}
where $l(\theta)=\sqrt{L_h(\theta)+c}$, $c\in \mathbb{R}$  such that $\inf\limits_{\theta \in \Theta} \left( L(\theta)+c\right)>0$, and $\eta>0$ is the base step size. One striking feature of this algorithm is its unconditional energy stability, i.e.,  $r_k$ as an approximation of $\sqrt{L_h+c}$ is decreasing in $k$ 
for any $\eta>0$.  Consult \cite{LNTY23} for further details of this algorithm.  

{ In certain model scenarios where  computational complexity is not prohibitive,  its feasible to directly tackle the corresponding Euler-Lagrange equation using iterative methods. In such cases, the problem~\eqref{minp} transforms  into a nonlinear equation with a Lagrange multiplier $\lambda$: 
\begin{align*}
    & \nabla L_h(\theta) +B^\top\lambda=0,\\
    &B\theta=b.
\end{align*}
We assume that $B$ satisfies the inf-sup condition, ensuring the well-posedness of  the above problem. A standard Newton scheme for solving the nonlinear equation can be expressed as: 
\begin{align}\label{e:Newton}
   \left(\begin{array}{c}
        \theta^{k+1} \\
        \lambda^{k+1}
   \end{array}\right)=
   \left(\begin{array}{c}
        \theta^{k} \\
        \lambda^{k}
   \end{array}\right)-   
   \left(\begin{array}{cc}
        \nabla^2 L_h(\theta^k) &  B^\top\\
        B & 0
   \end{array}\right)^{-1}
   \left(\begin{array}{c}
         \nabla L_h (\theta^k)+B^\top\lambda^k \\
        B\theta^k-b
   \end{array}\right).
\end{align}
}

%{\color{blue}
\begin{remark}
The optimization algorithms described above offer flexibility in space discretizations. For instance, when employing the mixed finite element method to solve two-phase flow in porous media, as demonstrated in Example 6 and Example 7, conventional approaches require the use of upwind schemes in space discretization for mass conservation equations via semi-implicit schemes \cite{KounS2022,KouWChenS}. However, with the optimization algorithms 
outlined, we find that the imposition of  upwind schemes in the space discretization of mass conservation equations is unnecessary. Its worth noting a recent advancement in \cite{LiuOLiShu2023}, presenting a novel approach for numerically solving time-dependent conservation laws using implicit schemes via primal-dual hybrid gradient methods. In our research, we focus on using the Onsager variational principle as an approximation tool, wherein  the minimization of the discrete Rayleighian functional is achieved through   optimization algorithms.
\end{remark}
%}

\subsection{Simulations} 
The numerical experiments for single equations can be found in literature. We
present only some examples for PNP and the two-phase porous media equations. They
include more than one component.

%The FD method described is used to perform simulations in $d=1$. We use the following set of test cases:  
%The following proposition is a direct generalization of the contiunous version.
%\begin{proposition}
%  Suppose $(\tmmathbf{u}_h^{k + 1}, \tmmathbf{m}_h^{k + 1})$ is a minimizer of the fully
%  discrete problem, we then have
%  \[ \mathcal{E} (\tmmathbf{u}_h^{k + 1}) \leq \mathcal{E} (\tmmathbf{u}_h^k) -
%     \frac{1}{\tau} \Phi (\tmmathbf{u}_h^{k + 1} ; \tmmathbf{m}_h^{k + 1}) \leq
%     \mathcal{E} (\tmmathbf{u}_h^k), \]
%  and $\tmmathbf{u}_h^{k + 1} $satisfies the mass conservation equation that
%  \[ \int_{\Omega} u_{h,i}^{k + 1} \tmop{d\tmmathbf{x}} = \int_{\Omega} u_{h,i}^k \tmop{d\tmmathbf{x}},
%     \quad i = 1, \ldots s. \]
%\end{proposition}
%\begin{proof}
%    The proof of the energy inequality  is almost the same as that for the continuous problem.
%   The mass conservation can be easily verified by choosing $\tmmathbf{v}_h\equiv 1$ and using the fact that
%    $\tmmathbf{m}_{h,i}\in \tmmathbf{V}$ and 
%    $$\int_{\Omega}\nabla\cdot \tmmathbf{m}_{h,i}dx=\int_{\partial\Omega}\tmmathbf{m}_{h,i}\cdot\tmmathbf{n} ds=0.$$
%\end{proof}

%In some problems, like the Fokker-Planck equation, the unknown function $\tmmathbf{u}$ denote the 
% density of particles, it is always positive. This is guaranteed by the entropy term of the free
% energy density function, e.g. $u \log(u)$. In this case, the function is well defined only when $u\geq 0$. This property should also be useful to preserve the positivity of the discrete solutions.

In the following numerical experiments, we choose the finite element method and implement the schemes in Netgen/NGSolve(\cite{schoberl1997netgen,schoberl2014c}). We choose to use the method~\eqref{e:Newton} to solve 
the optimization problem~\eqref{minp} in the numerical experiments below.

\textbf{Example 1.}
In the first example, we consider the PNP equation in a square region $(0,1)\times(0,1)$. The periodic boundary conditions are proposed for the system. Assume there exist two components in the system and one has positive charges and the other has negative changes. The initial distributions are given respectively by $u_1=1.02+\sin(2\pi x)*\cos(2\pi x)$ and $u_2=1.02+\sin(2\pi y)*\cos(2\pi y)$. We would like to compute the evolution of the two components.

\begin{figure}[htbp]
    \centering
   {\subfigure
   {\resizebox{0.2\columnwidth}{!}{\includegraphics{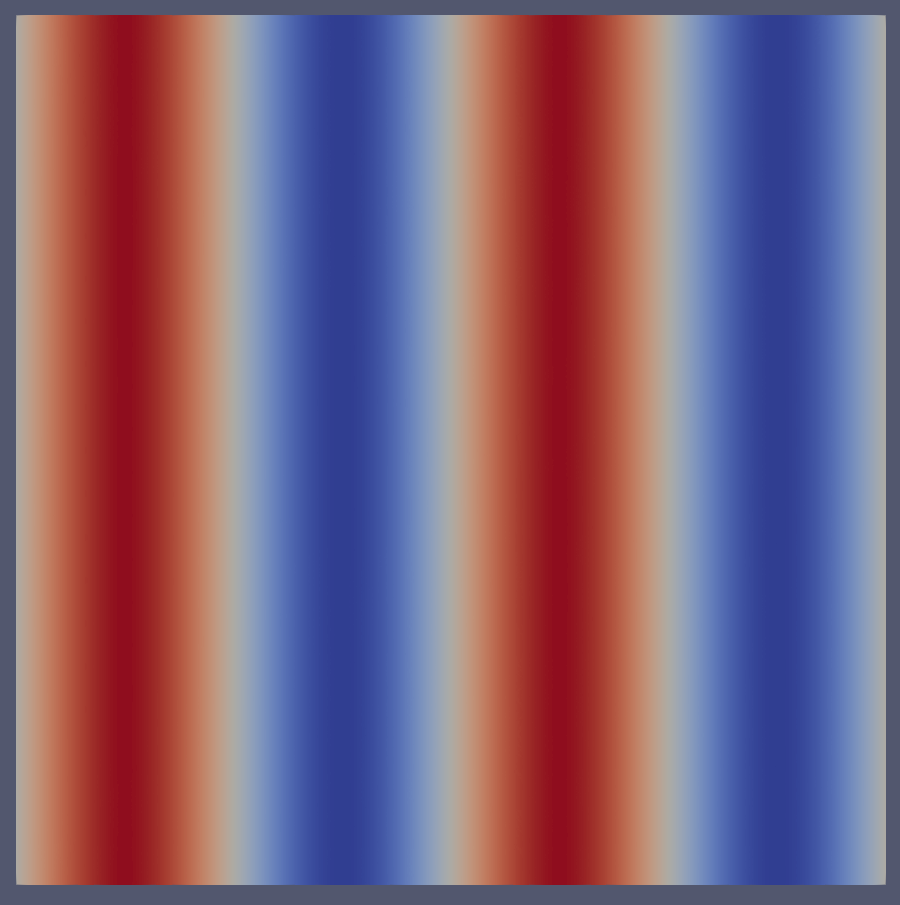}}}}
       {\subfigure%{$t = 0, u_1$}
   {\resizebox{0.2\columnwidth}{!}{\includegraphics{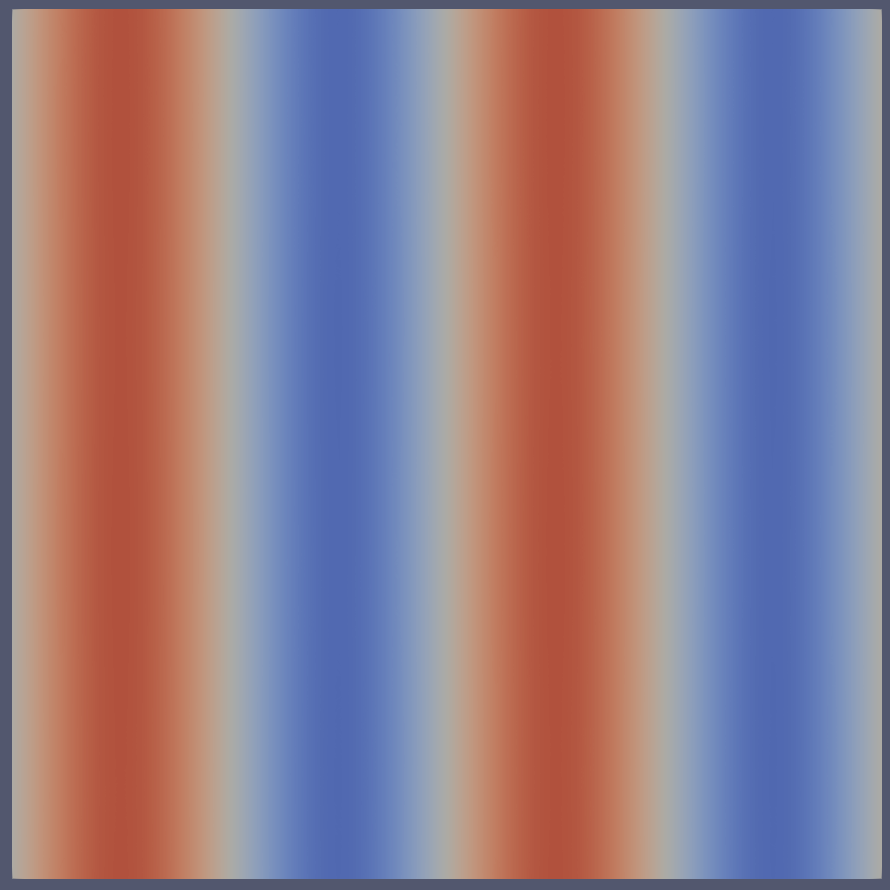}}}}
            {\subfigure%{$t = 0, u_1$}
   {\resizebox{0.2\columnwidth}{!}{\includegraphics{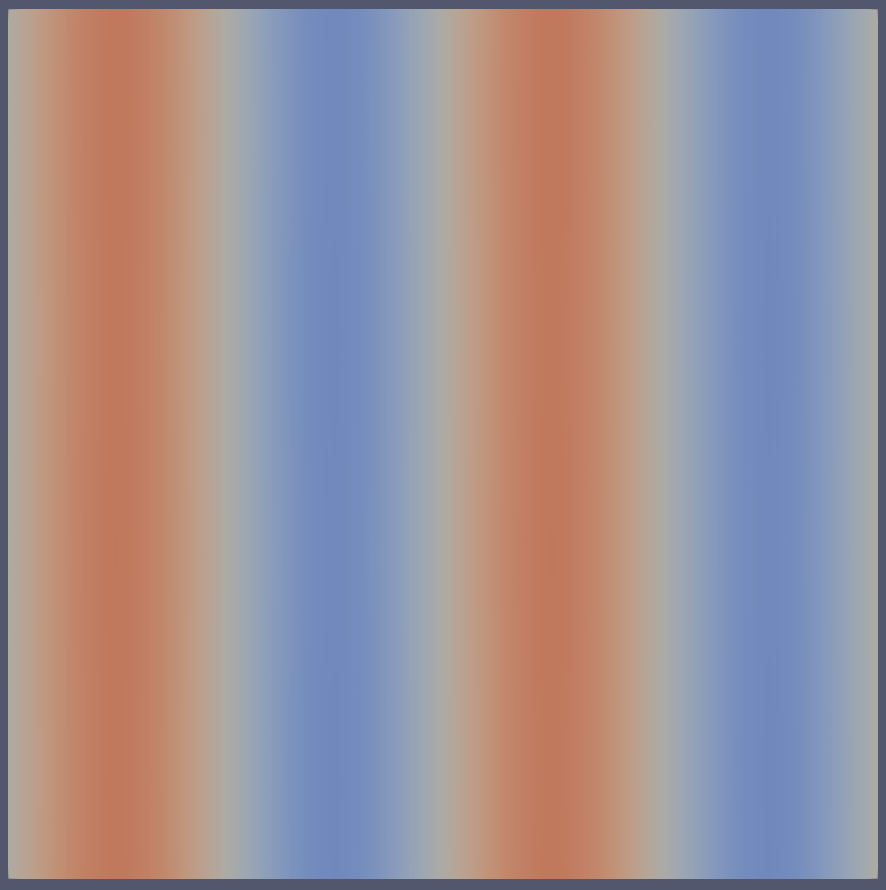}}}}
         {\subfigure
   {\resizebox{0.06\columnwidth}{!}{\includegraphics{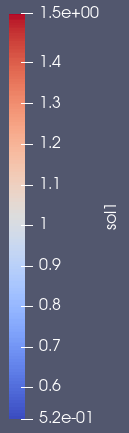}}}}
   
    {\subfigure%{$t = 0, u_2$}
    {\resizebox{0.2\columnwidth}{!}{\includegraphics{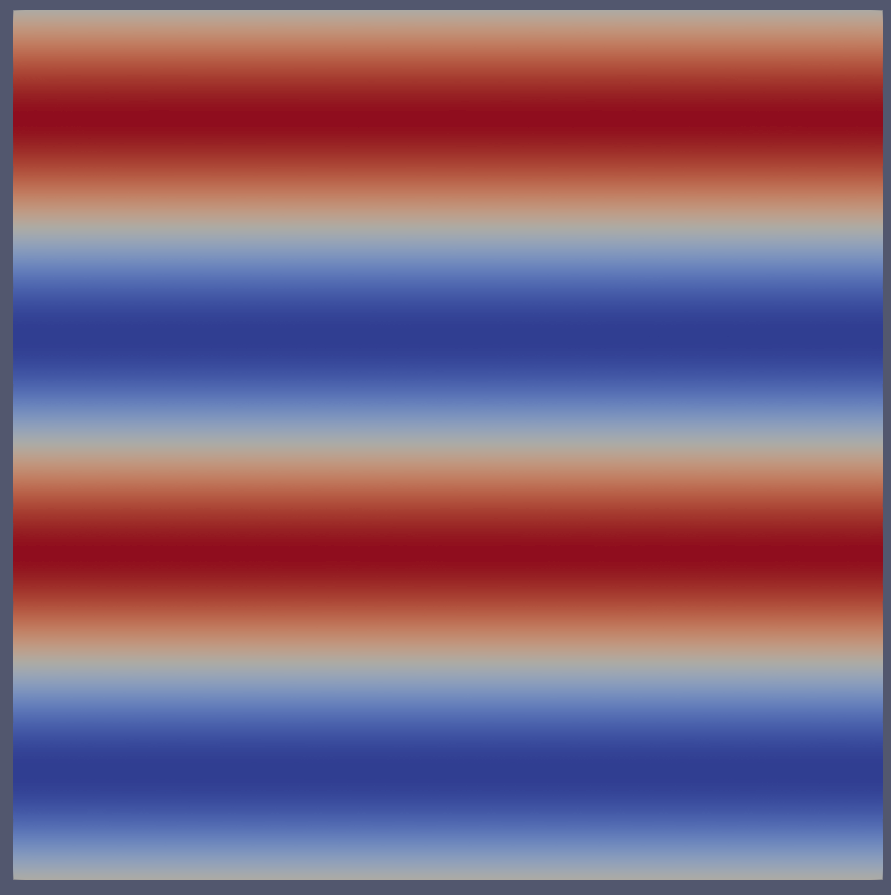}}}}
    {\subfigure%{$t = 0, u_2$}
    {\resizebox{0.2\columnwidth}{!}{\includegraphics{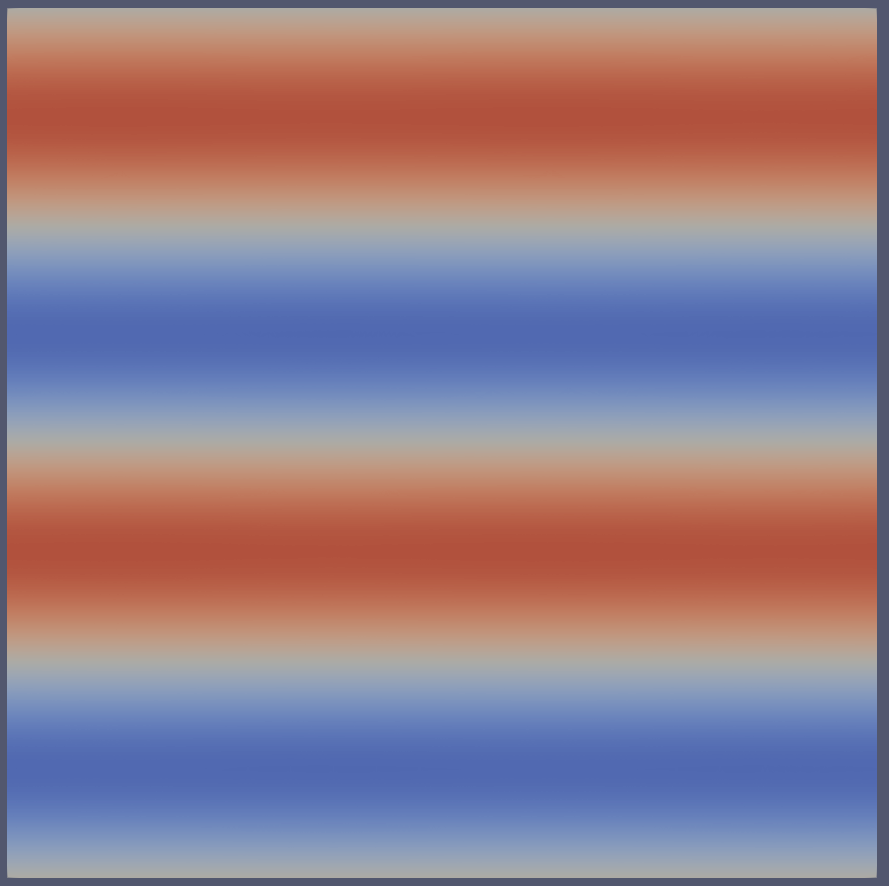}}}}
        {\subfigure%{$t = 0, u_2$}
    {\resizebox{0.2\columnwidth}{!}{\includegraphics{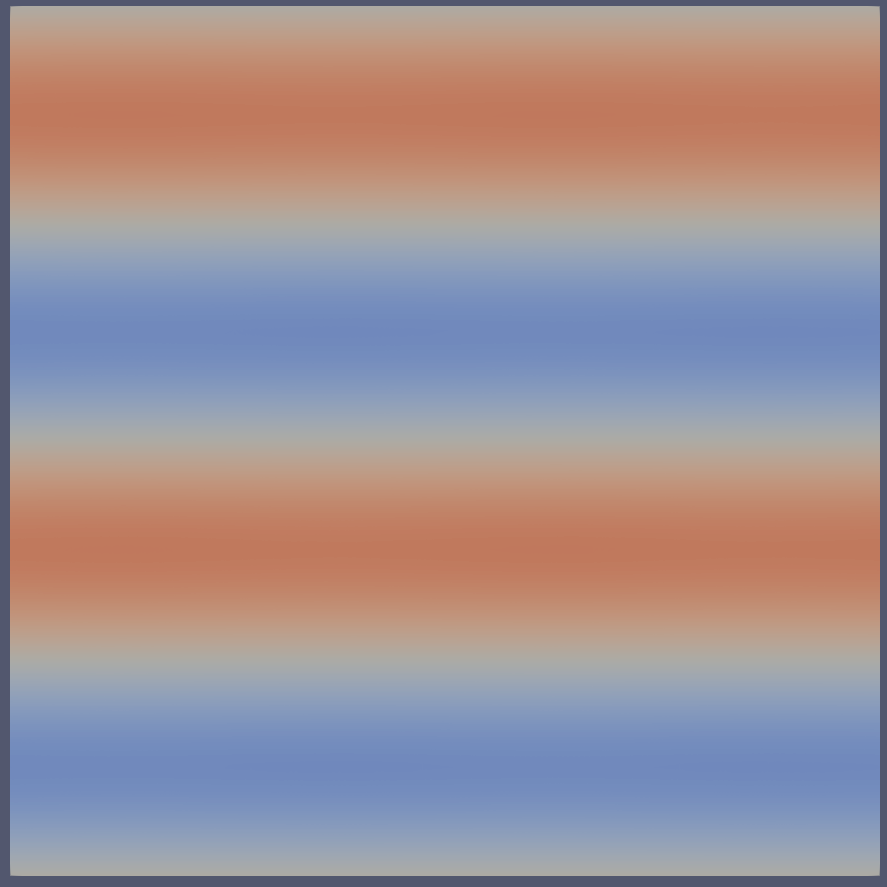}}}}
             {\subfigure
   {\resizebox{0.066\columnwidth}{!}{\includegraphics{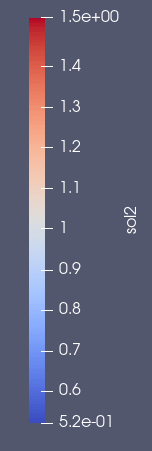}}}}

      {\subfigure%{$t = 0, potential$}
   {\resizebox{0.2\columnwidth}{!}{\includegraphics{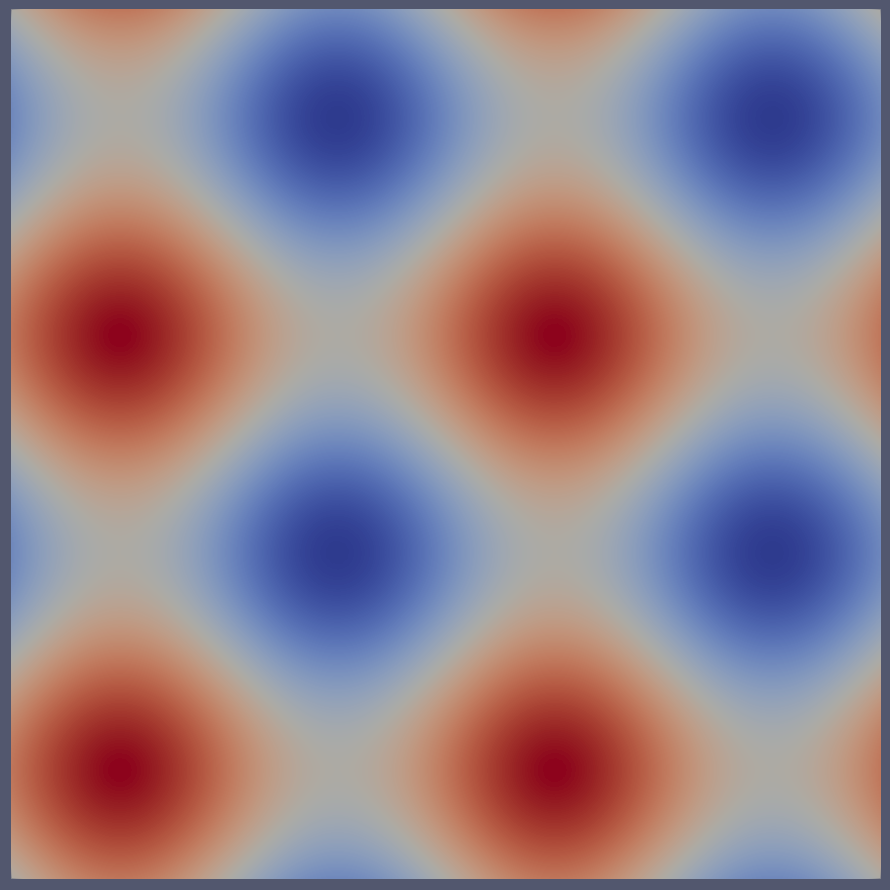}}}}
   {\subfigure%{$t = 0, potential$}
   {\resizebox{0.2\columnwidth}{!}{\includegraphics{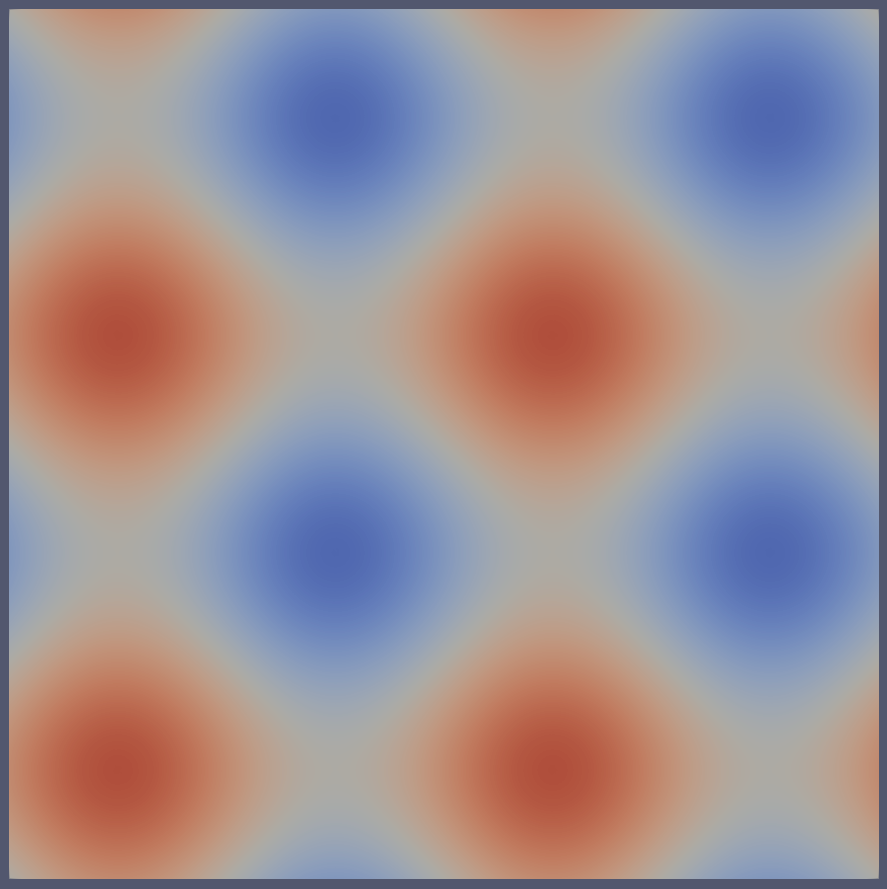}}}}
   {\subfigure%{$t = 0, potential$}
   {\resizebox{0.2\columnwidth}{!}{\includegraphics{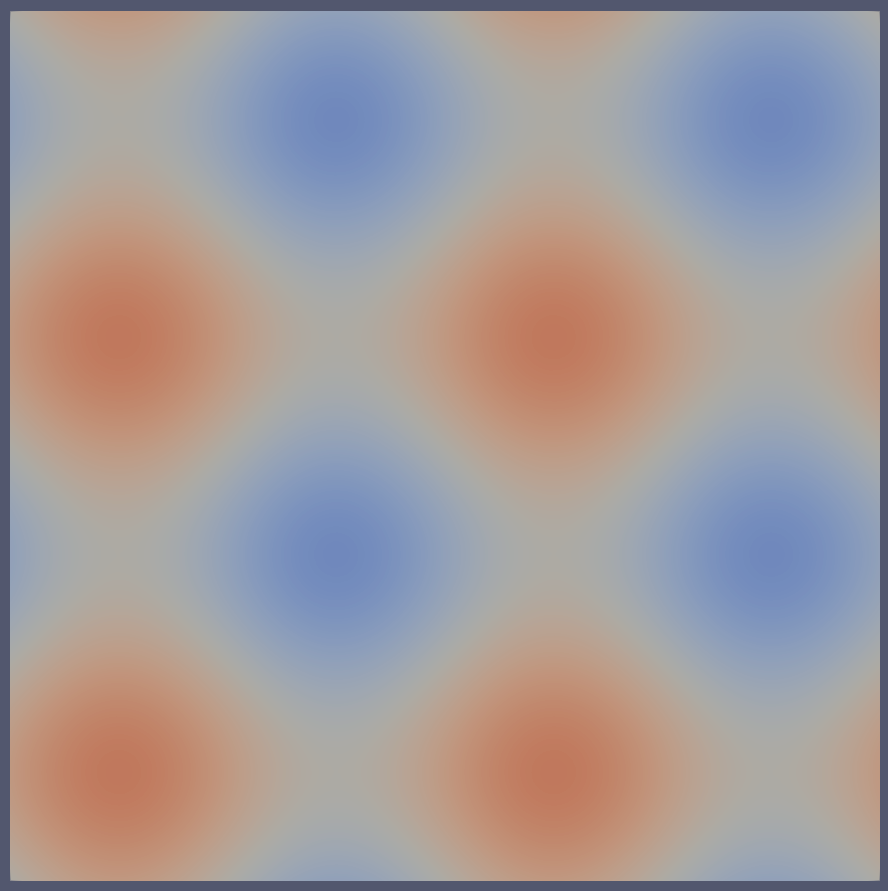}}}}
            {\subfigure
   {\resizebox{0.055\columnwidth}{!}{\includegraphics{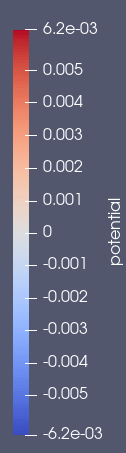}}}}
    \caption{The solutions of the PNP equation. 
    First Row: the distribution of $u_1$ at various time $t=0.0005,0.0025,0.005$; 
    Second Row: the distribution of $u_2$ at various time
    $t=0.0005,0.0025,0.005$;
    Last Row: the potential $\psi$ at various time
    $t=0.0005,0.0025,0.005$.
    }
    \label{fig:ex1}
\end{figure}

\begin{figure}[htbp]
    \centering
     {\subfigure%{$t = 0, potential$}
   {\resizebox{0.6\columnwidth}{!}{\includegraphics{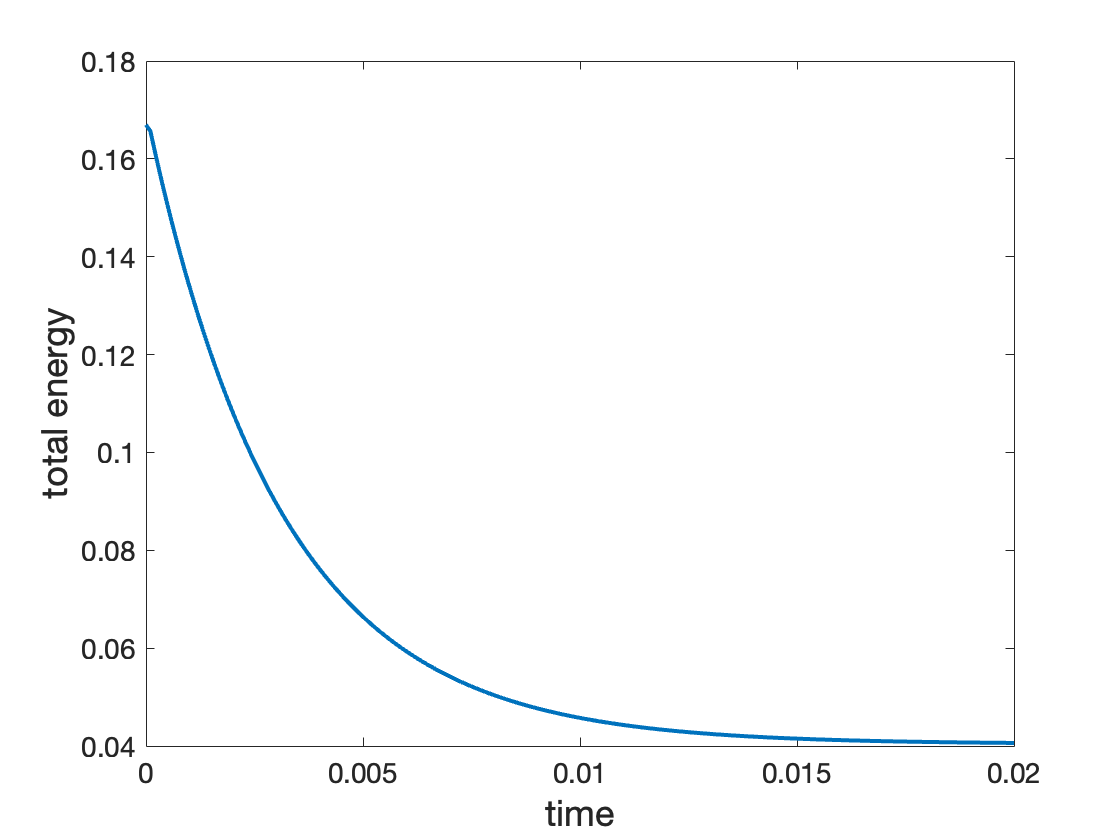}}}}
    \caption{The change of the total energy with respect to time.}
    \label{fig:energyEx1}
\end{figure}

\begin{figure}[htbp]
    \centering
     {\subfigure%{$t = 0, potential$}
   {\resizebox{0.7\columnwidth}{!}{\includegraphics{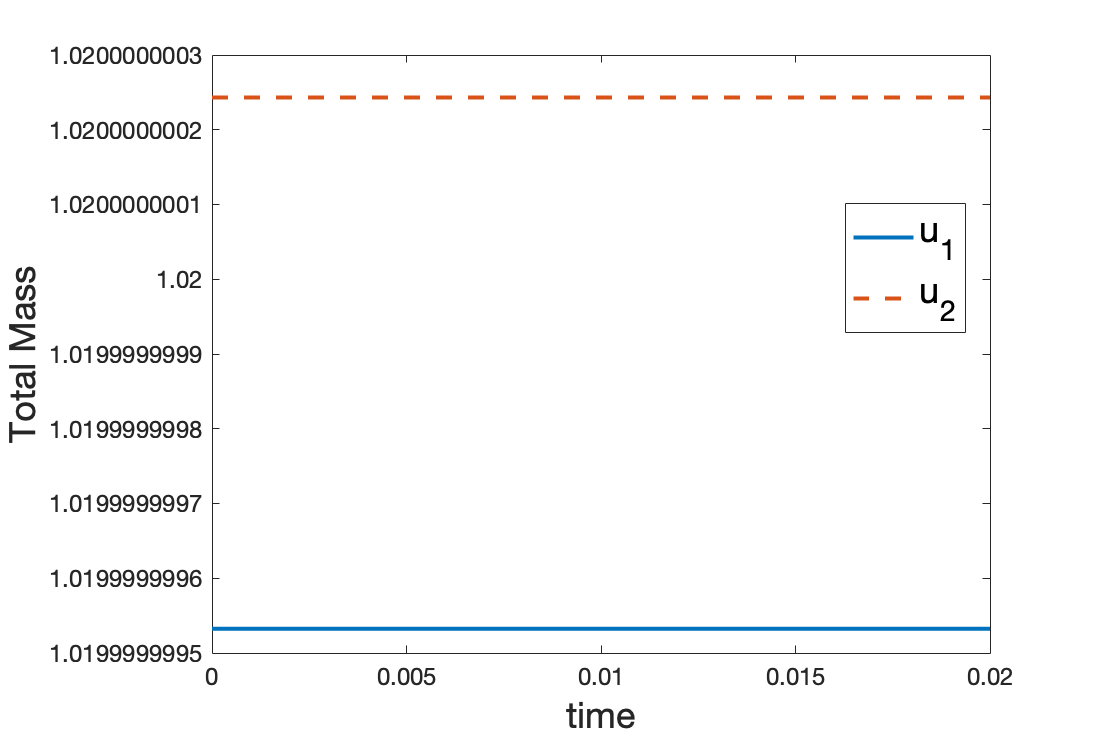}}}}
    \caption{The change of the total mass of the two components with respect to time.}
    \label{fig:massEx1}
\end{figure}

We partition the domain uniformly with mesh size $h=0.05$. We use $P_2$ finite elements to discretize  $u_i$ and $\lambda_i$,
and use $P_3$ finite elements for $\mathbf{m}_i$ and $\phi_i$.
We set $\tau=0.0001$. {In the numerical tests, we typically  choose $\tau\sim c h^2$, where $c$ is a constant. This choices are based on the fact that our scheme is first order with respect to $\tau$ and second order with respect to $h$. However, if our focus is only on stationary states, we can select a larger $\tau$.  It is important, though, that $\tau$ does not become  too large, as this could affect the uniqueness of the minimizer in the optimization problems. Furthermore, the choice of $\tau$ may also influence the convergence of the Newton iterations.}  In each time step, we use a Newton method to solve the nonlinear algebraic equation. It turns out the Newton method converges fast and only two or three iterations are needed in each time step. 
The discrete solution is shown in Figure~\ref{fig:ex1}. We could see that the distributions of the two ions become more and more homogeneous
due to the diffusion.
In Figure~\ref{fig:energyEx1}, we show the change of the total energy with respect to time. We could see that the energy always decays. This verifies the theoretical analysis in Proposition 2 in Section 3. In Figure~\ref{fig:massEx1}, we show the change of the total mass of the two components with respect to time. We could see that the total mass for each component keeps constant.

\textbf{Example 2.} \label{ex-1}
In this example, we consider two-phase flow in porous media within a closed system in a square region $[0, 100 \text{ m}]^2$ with constant porosity. We utilize the data as in \cite{KounS2022}. The initial distribution of wetting-phase saturation and permeability are illustrated in Figure \ref{fig1-initial}. In a porous medium, the porosity in the high-permeability region is 0.3, while the porosity in the rest region is 0.15. The energy parameter in the high-permeability region are given as $\gamma_w =11.655$ bar, $\gamma_n =1.0796$ bar, $\gamma_{wn} = 7.424$ bar, while the energy parameter in the low-permeability region are $\gamma_w = 5.8275$ bar, $\gamma_n = 0.5398$ bar, $\gamma_{wn} = 3.721$ bar. The viscosities are taken as $\eta_w = 0.9$ cP and $\eta_n = 0.1$ cP, respectively. 

The relative permeability is obtained from the following equation:
$$
k_{rw}(S_w) =S_w^3,\quad k_{rn} = (1 - S_w)^3 .
$$
 
\begin{figure}[htbp]
\centering
	\subfigure[Initial wetting-phase saturation]{
		%\begin{minipage}[t]{0.5\linewidth}
		\centering
		\includegraphics[width=1.9in]{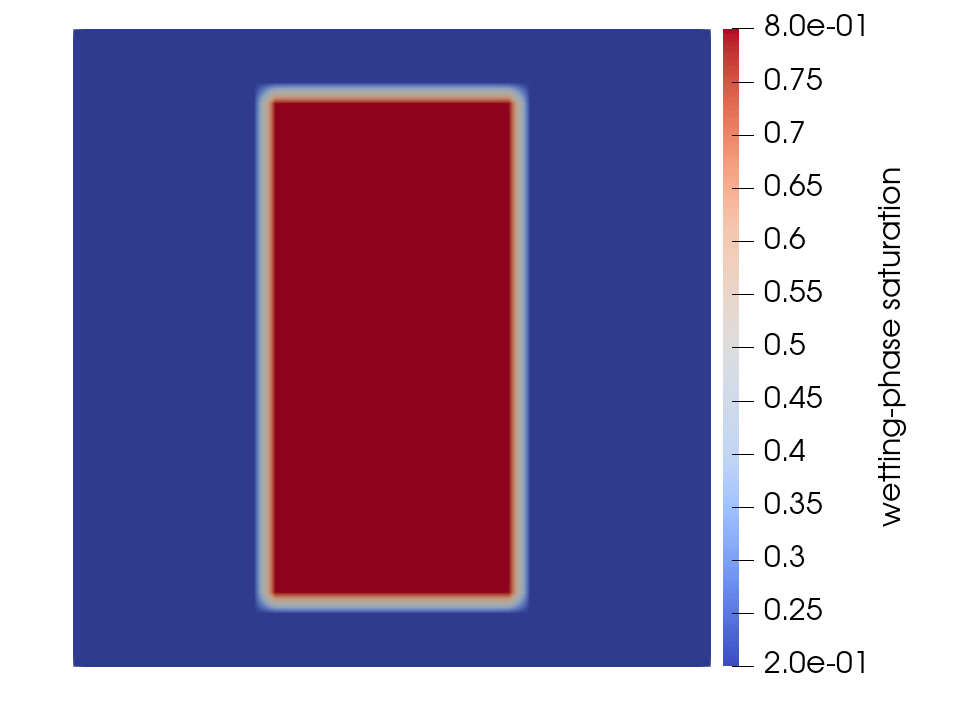}
		%\caption{Surface plot of the computed pressure $p_h^n$ at the terminal time $\tau$.}
		\label{fig1}
		%\end{minipage}%
	}
	\subfigure[Initial permeability]{
		%\begin{minipage}[t]{0.5\linewidth}
		\centering
		\includegraphics[width=1.9in]{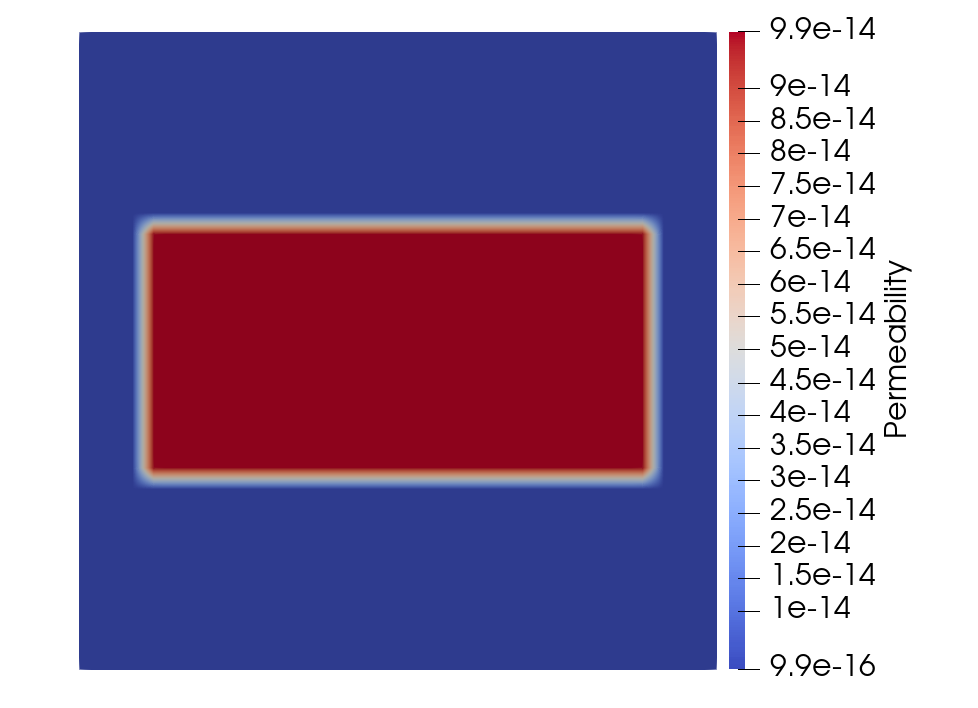}
		%\caption{Surface plot of the computed temperature $T_h^n$ at the terminal time $\tau$.}
		\label{fig2}
		%\end{minipage}
	}
	\caption{Initial distributions of wetting-phase saturation and permeability in Example 2.} \label{fig1-initial}
\end{figure}

\begin{figure}[htbp]
%	\subfigure[Energy dissipation]{
		%\begin{minipage}[t]{0.5\linewidth}
		\centering
		\includegraphics[width=3.9in]{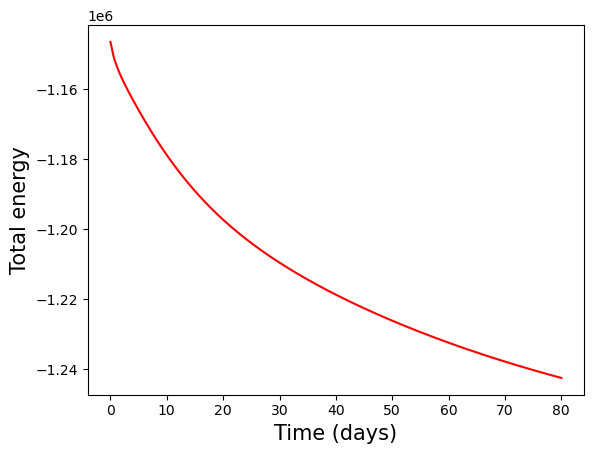}
		%\caption{Surface plot of the computed pressure $p_h^n$ at the terminal time $\tau$.}
		\label{fig3}
		%\end{minipage}%
%	}
%	\subfigure[Initial non-wetting-phase saturation]{
%		%\begin{minipage}[t]{0.5\linewidth}
%		\centering
%		\includegraphics[width=1.9in]{picture/2test1/saturation_nw_initial.png}
%		%\caption{Surface plot of the computed temperature $T_h^n$ at the terminal time $\tau$.}
%		\label{fig4}
%		%\end{minipage}
%	}
	\caption{Energy dissipation with time in Example 2.} \label{fig1_energy}
\end{figure}

\begin{figure}[htbp]
	\centering
	\includegraphics[width=5cm]{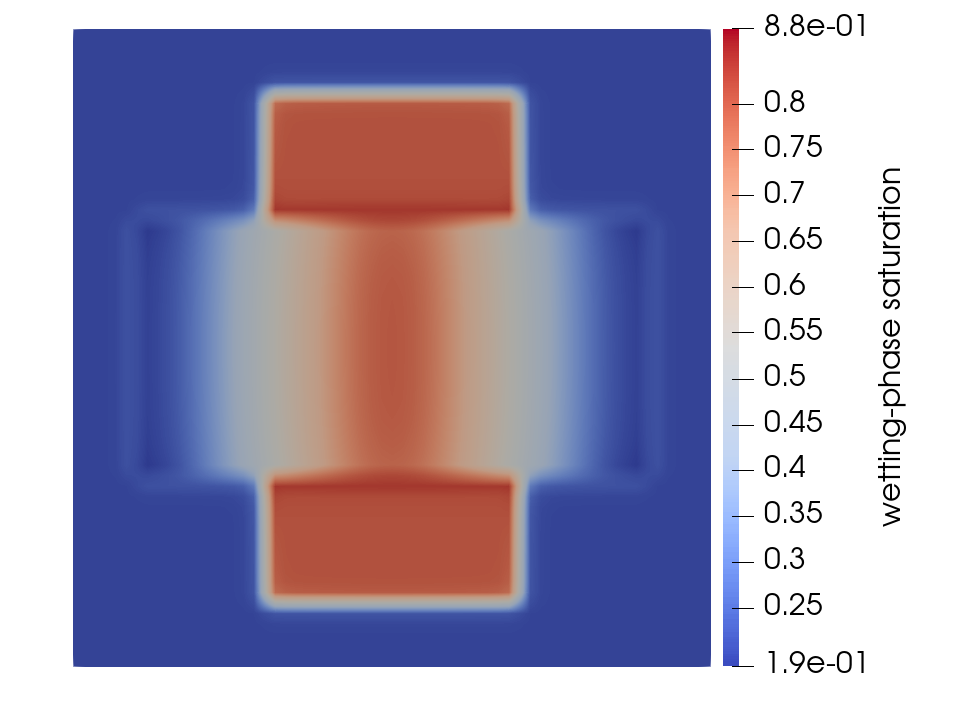}
	\includegraphics[width=5cm]{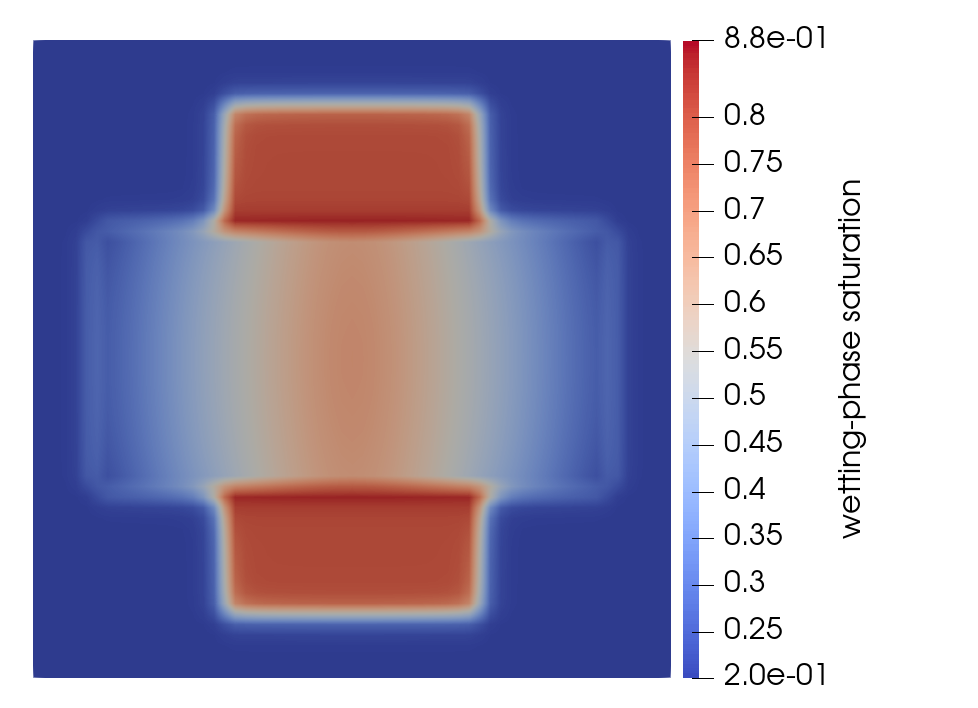}
	
	\includegraphics[width=5cm]{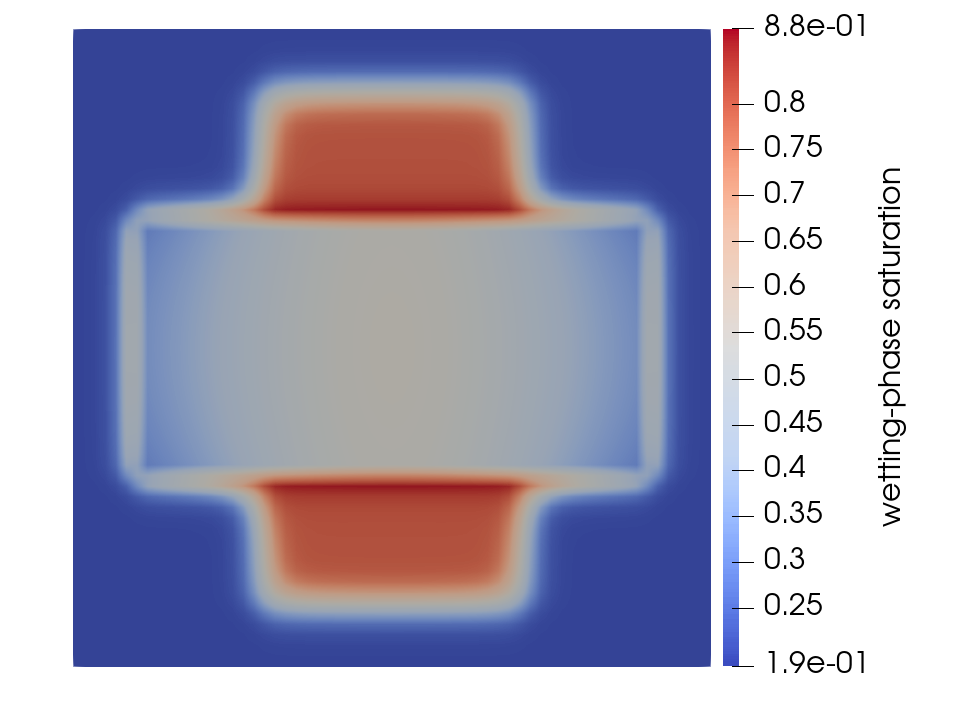}
	\includegraphics[width=5cm]{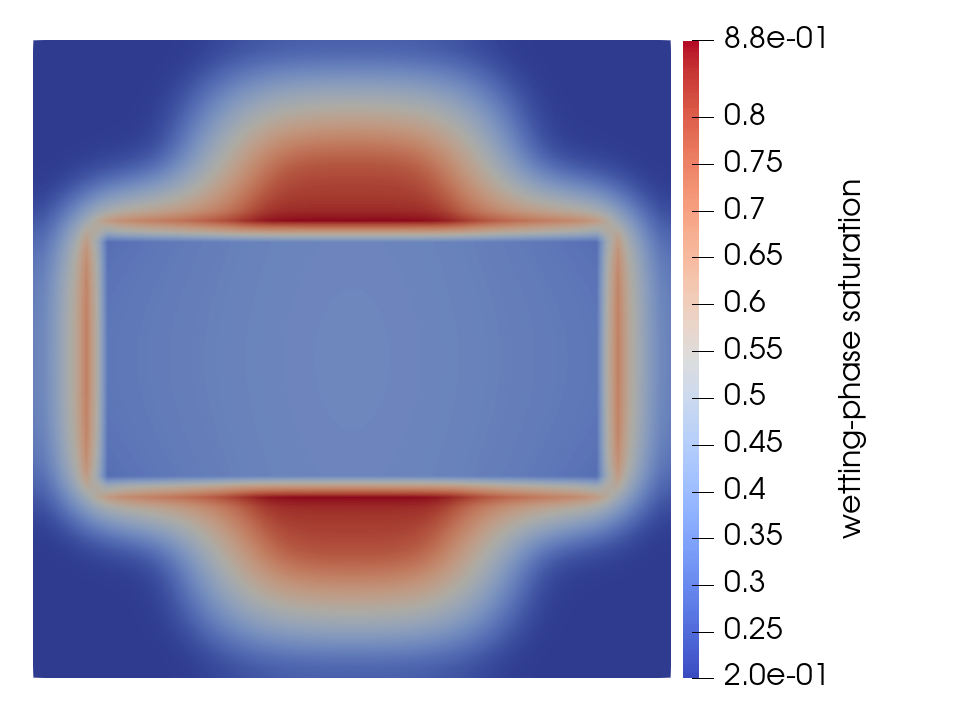}
	\caption{Distributions of wetting-phase saturation at different times in Example 2. Top-left: $t$ = 5 days. Top-right: $t$ = 10 days. Bottom-left: $t$ = 20 days. Bottom-right: $t$ = 80 days.}\label{fig1-Sw}
\end{figure}

\begin{figure}[htbp]
	\centering
	\includegraphics[width=5cm]{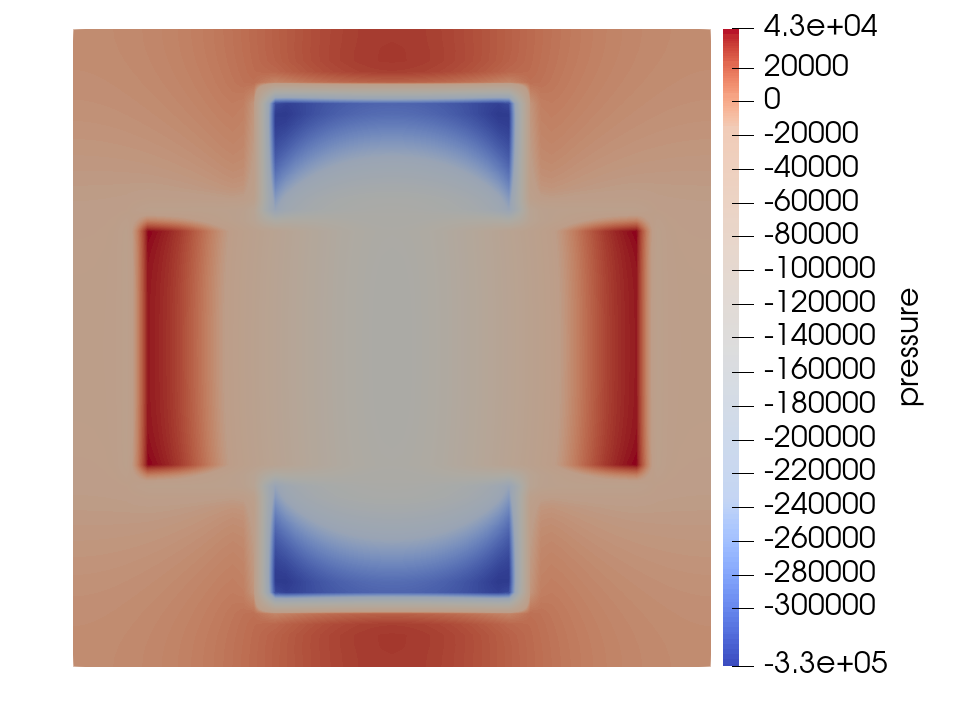}
	\includegraphics[width=5cm]{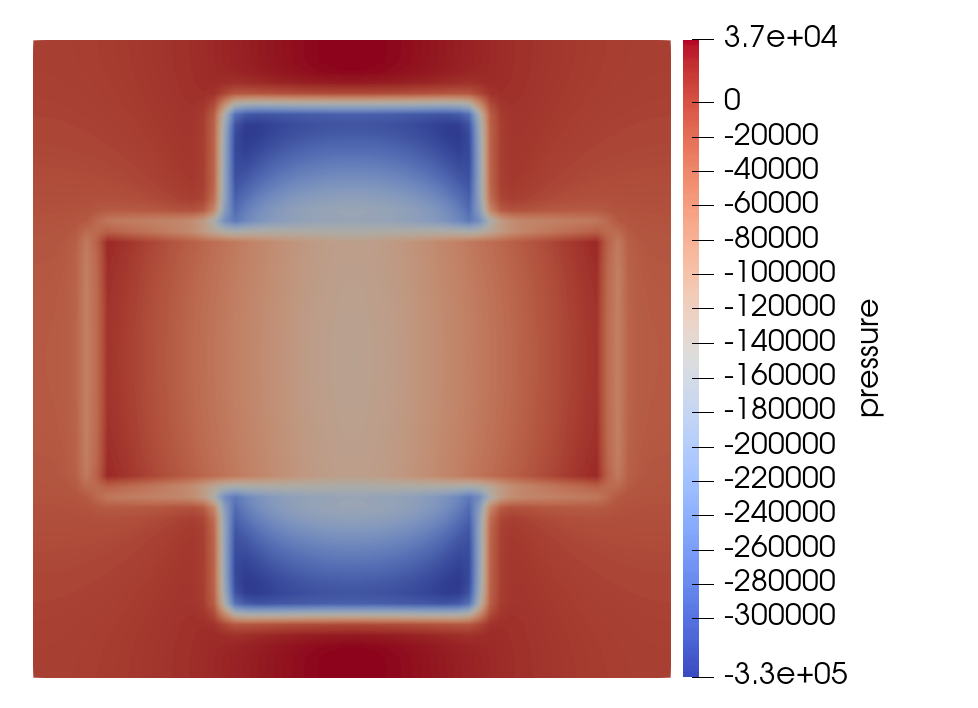}
	
	\includegraphics[width=5cm]{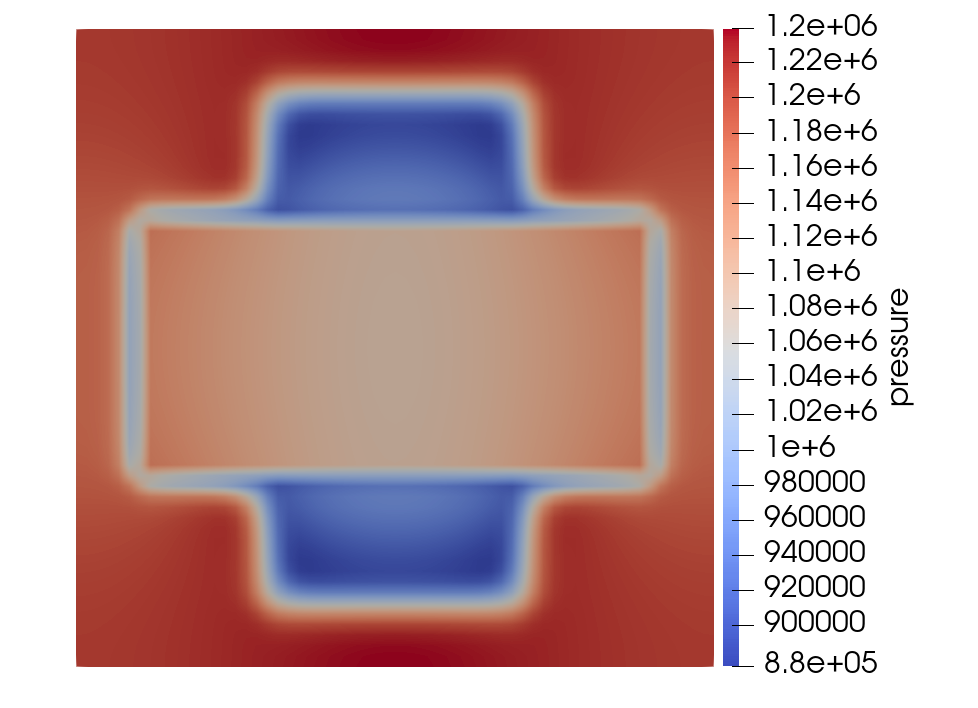}
	\includegraphics[width=5cm]{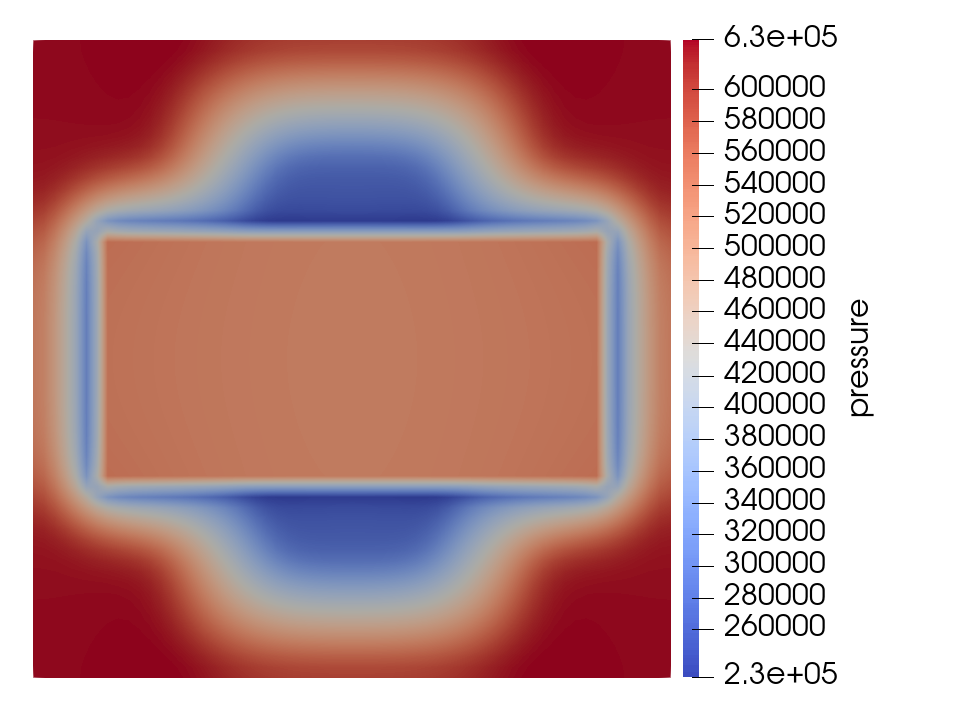}
	\caption{Distributions of pressure at different times in Example 2. Top-left: $t$ = 5 days. Top-right: $t$ = 10 days. Bottom-left: $t$ = 20 days. Bottom-right: $t$ = 80 days.}\label{fig1-p}
\end{figure}

\begin{figure}[htbp]
	\centering
	\includegraphics[width=5cm]{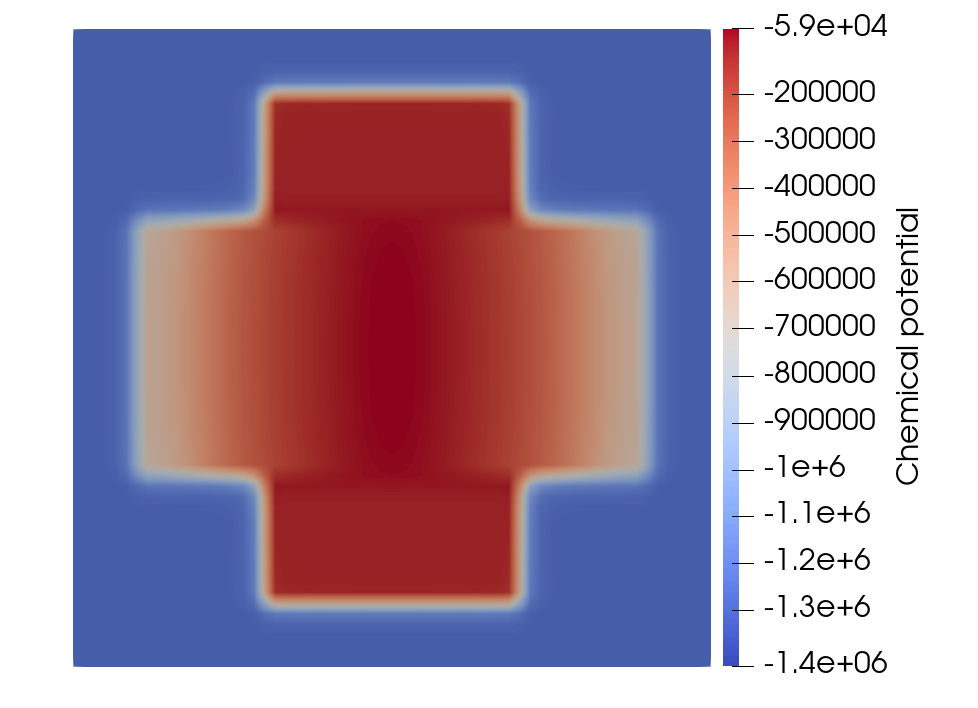}
	\includegraphics[width=5cm]{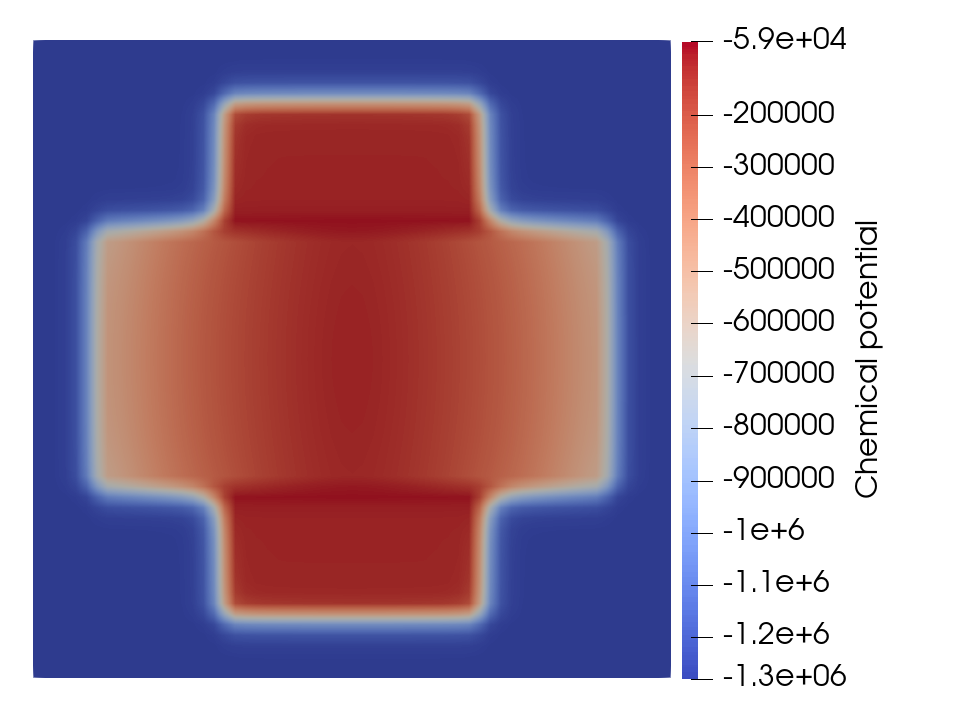}
	
	\includegraphics[width=5cm]{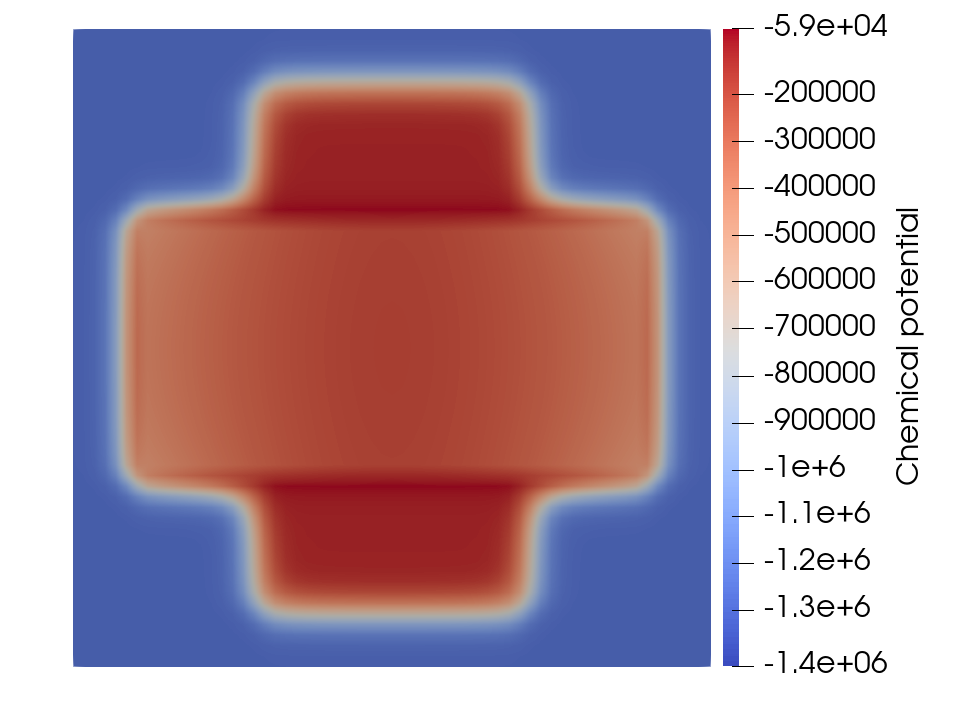}
	\includegraphics[width=5cm]{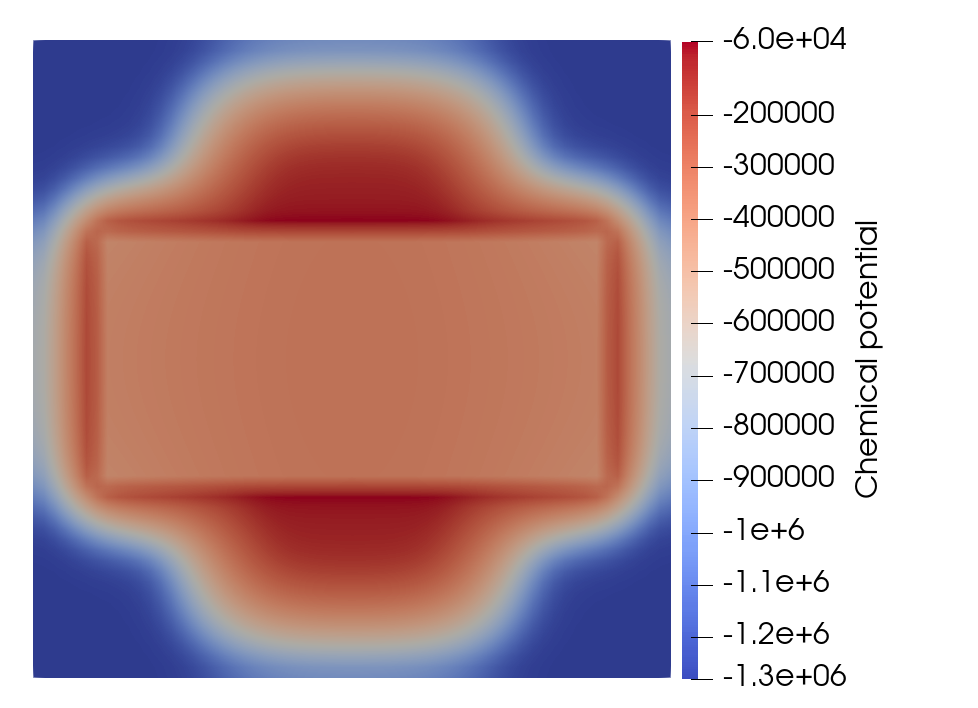}
	\caption{Distributions of chemical potential of wetting-phase at different times in Example 2. Top-left: $t$ = 5 days. Top-right: $t$ = 10 days. Bottom-left: $t$ = 20 days. Bottom-right: $t$ = 80 days.}\label{fig1-chw}
\end{figure}

We use the uniform mesh with $60 \times 60$ grid cells, the time step size is taken as $\tau = 0.01$ day. { The choice of the time step size $\tau$ in the following examples is similar to Example 1 which is related to the mesh size. We choose a suitable time step size to ensure the convergence of optimization algorithm.}  Figure \ref{fig1_energy} shows that the total free energy is  monotonously decreasing with time. The saturation distribution at different times is shown in Figure \ref{fig1-Sw}. In this closed system, the chemical potential gradient becomes a dominant driving force. The pressure and chemical potential contours are illustrated in Figure  \ref{fig1-p} and \ref{fig1-chw}. The numerical results {agree well} with the results in \cite{KounS2022}.

\textbf{Example 3.}\label{ex-2}  
In this example, we simulate a two-phase flow in porous media with rock compressibility. The problem is considered in a closed system within the square region $[0, 10 \text{ m}]^2$. We utilize the data as in \cite{KouWChenS}. The initial distributions of porosity and permeability are illustrated in Figure \ref{fig2-initial}. We take the reference porosity $ \phi_r = 0.175$. The viscosities are taken as $\eta_w = 1$ cP and $\eta_n = 0.5$ cP, respectively. 
For the energy parameters, we take
$$
\sigma_w=\frac{\bar{\sigma}_w}{\sqrt{K_0}}, \quad \sigma_n=\frac{\bar{\sigma}_n}{\sqrt{K_0}}, \quad \sigma_{w n}=\frac{\bar{\sigma}_{w n}}{\sqrt{K_0}}, \quad \sigma_{w s}=\sigma_{n s}=0.
$$
In this example, we take $\bar{\sigma}_w =0.58$ Pa, $\bar{\sigma}_n = 0.05$ Pa, $\bar{\sigma}_{wn} = 0.36 $ Pa, and the relative permeability is given as that in Example 2. We initialize the wetting-phase and non-wetting-phase saturation with a uniform distribution, setting  $S_w^0 = 0.3, S_n^0 = 0.7$. In this example, we simulate the problem in a uniform mesh with $70 \times 70$ grid cells and the time step size is taken as $\tau=0.001$ day.

\begin{figure}[htbp]
	\centering
	\subfigure[Initial porosity]{
		%\begin{minipage}[t]{0.5\linewidth}
		\centering
		\includegraphics[width=1.9in]{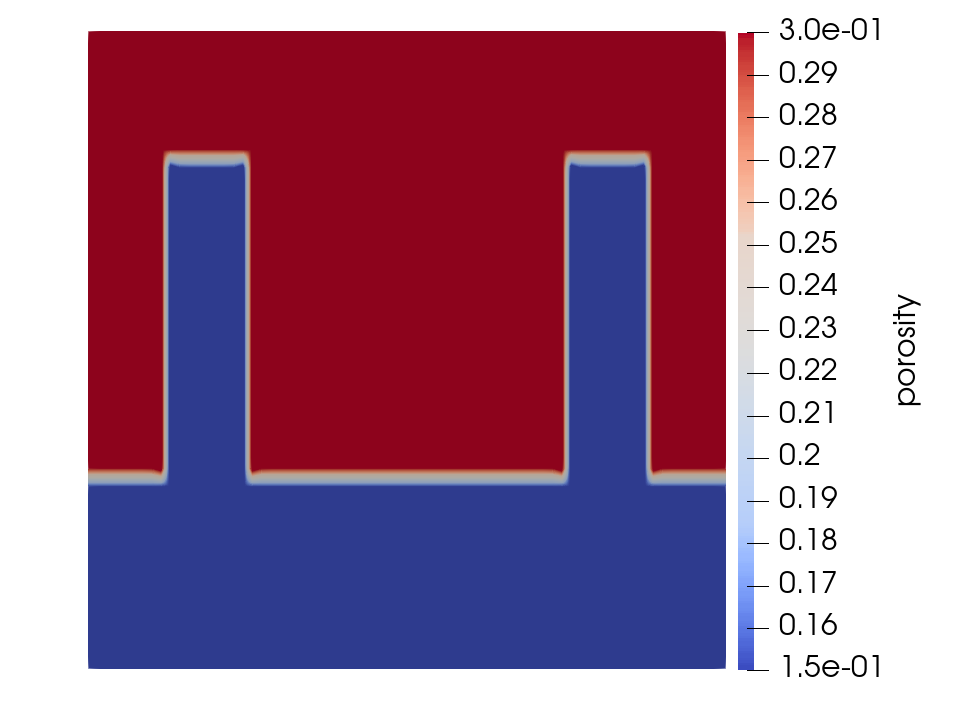}
		%\caption{Surface plot of the computed pressure $p_h^n$ at the terminal time $\tau$.}
		\label{fig21}
		%\end{minipage}%
	}
\subfigure[Initial permeability]{
				%\begin{minipage}[t]{0.5\linewidth}
				\centering
				\includegraphics[width=1.9in]{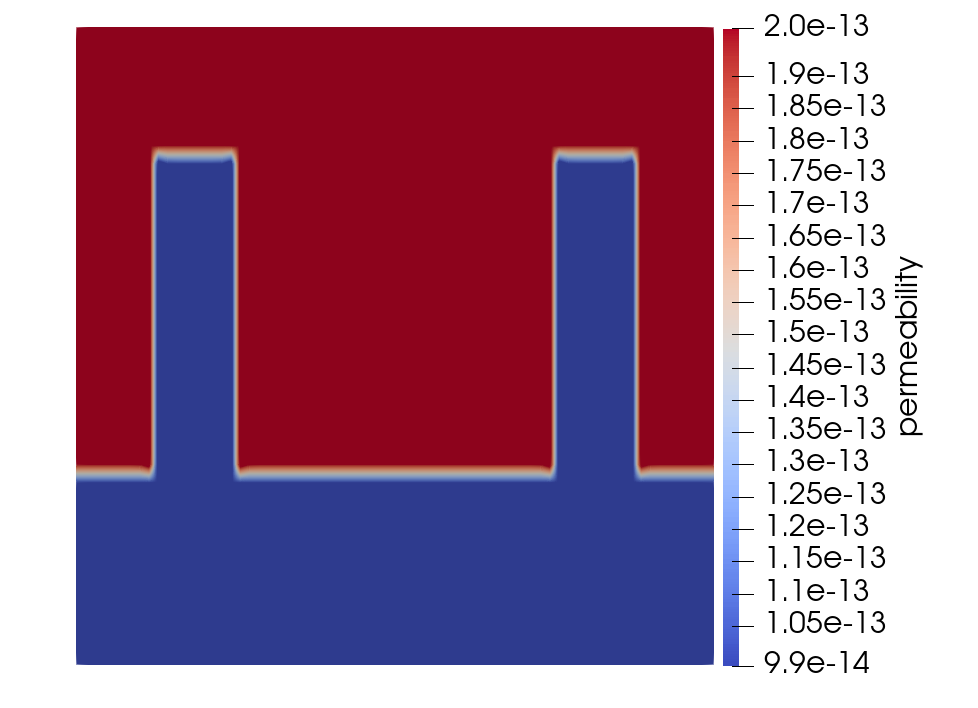}
				%\caption{Surface plot of the computed temperature $T_h^n$ at the terminal time $\tau$.}	
				%\end{minipage}
	}
	\caption{Initial distributions of porosity and permeability in Example 3.} \label{fig2-initial}
\end{figure}

\begin{figure}[htbp]
	%\subfigure[Initial wetting-phase saturation]{
		%\begin{minipage}[t]{0.5\linewidth}
		\centering
		\includegraphics[width=3.9in]{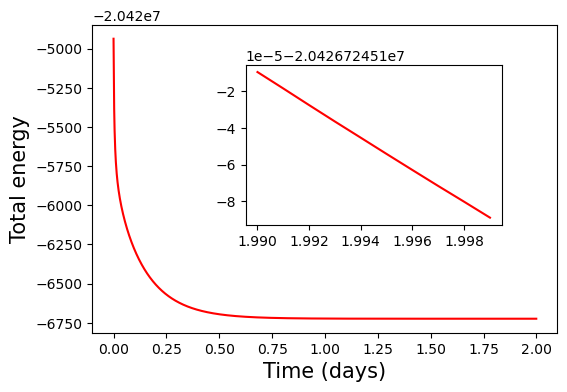}
		%\caption{Surface plot of the computed pressure $p_h^n$ at the terminal time $\tau$.}
		\label{fig22}
		%\end{minipage}%
	%}
%	\subfigure[Initial non-wetting-phase saturation]{
%		%\begin{minipage}[t]{0.5\linewidth}
%		\centering
%		\includegraphics[width=1.9in]{picture/2test3/saturation_nw_initial.png}
%		%\caption{Surface plot of the computed temperature $T_h^n$ at the terminal time $\tau$.}
%		\label{fig22}
%		%\end{minipage}
%	}
	\caption{Energy dissipation with time in Example 3.} \label{fig2-energy}
\end{figure}

\begin{figure}[htbp]
	\centering
	\includegraphics[width=5cm]{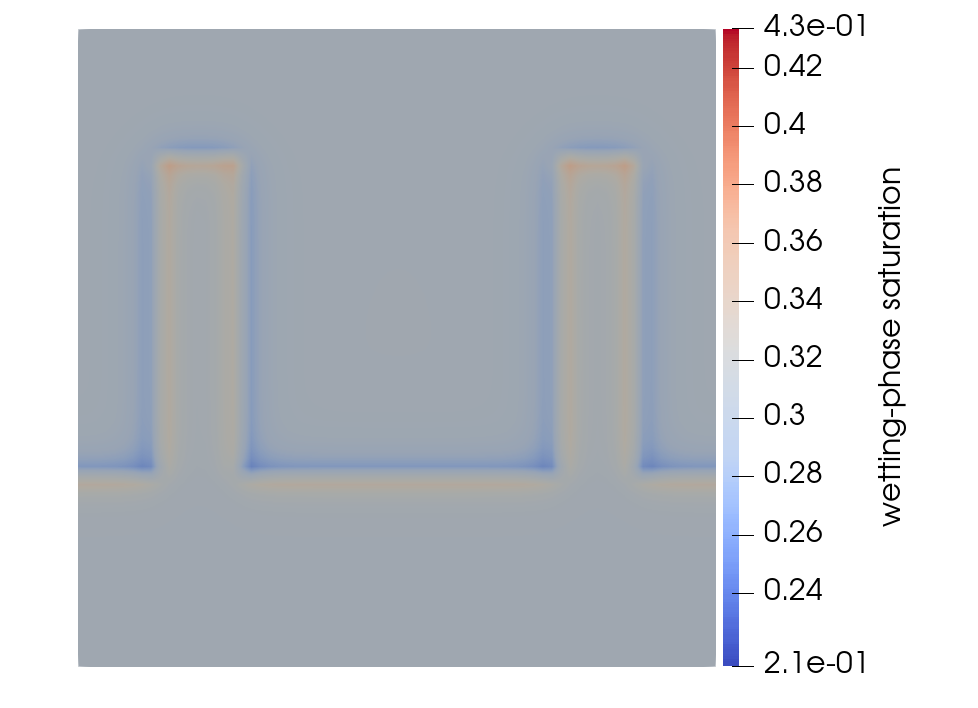}
	\includegraphics[width=5cm]{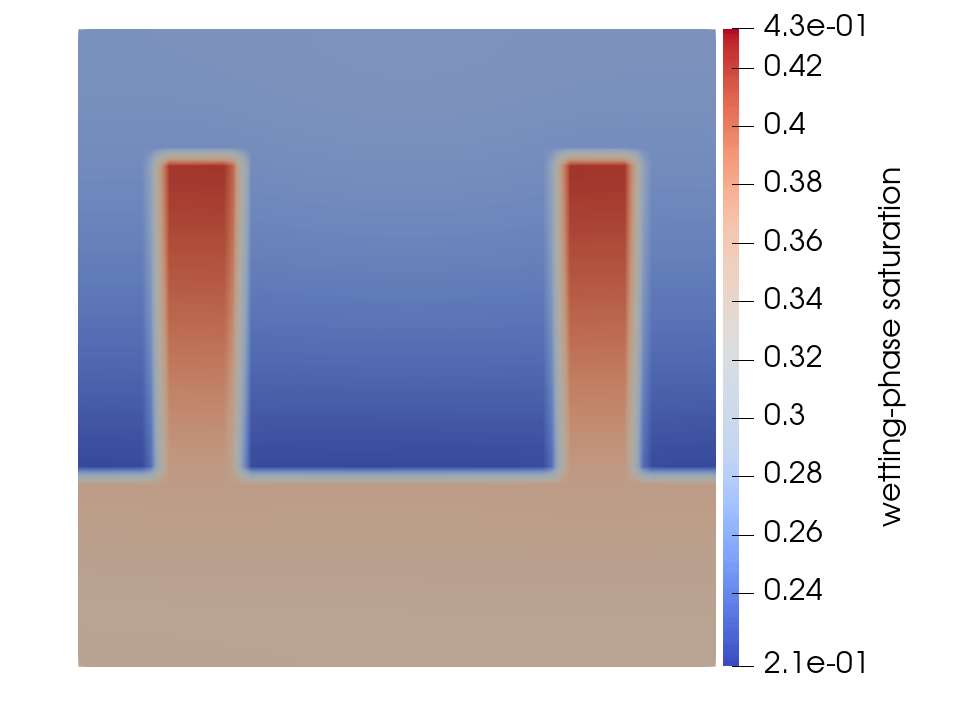}
	
	\includegraphics[width=5cm]{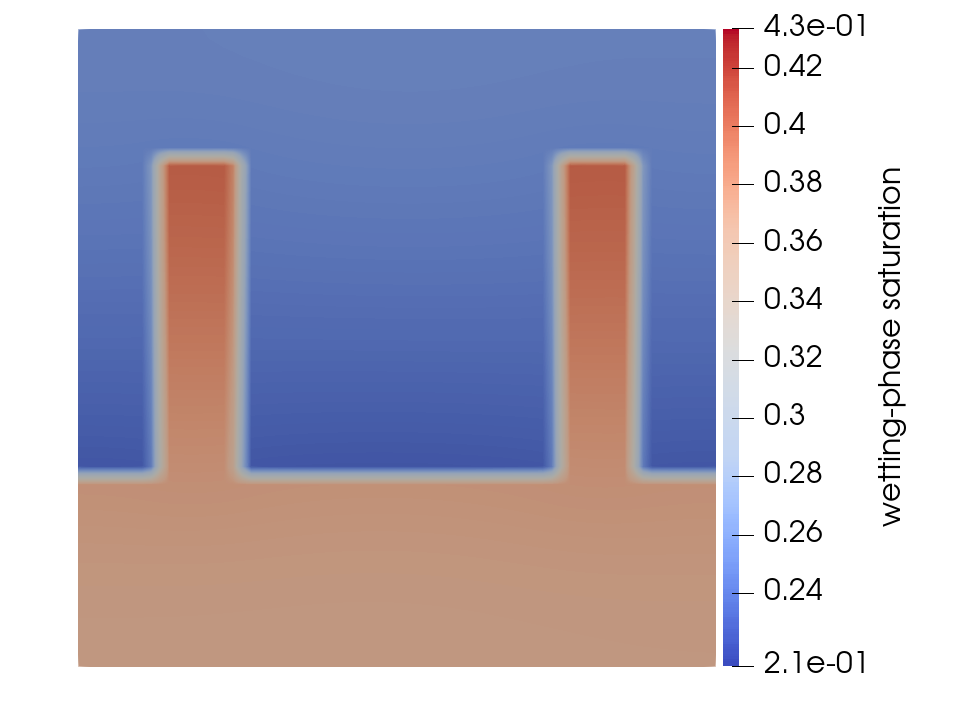}
	\includegraphics[width=5cm]{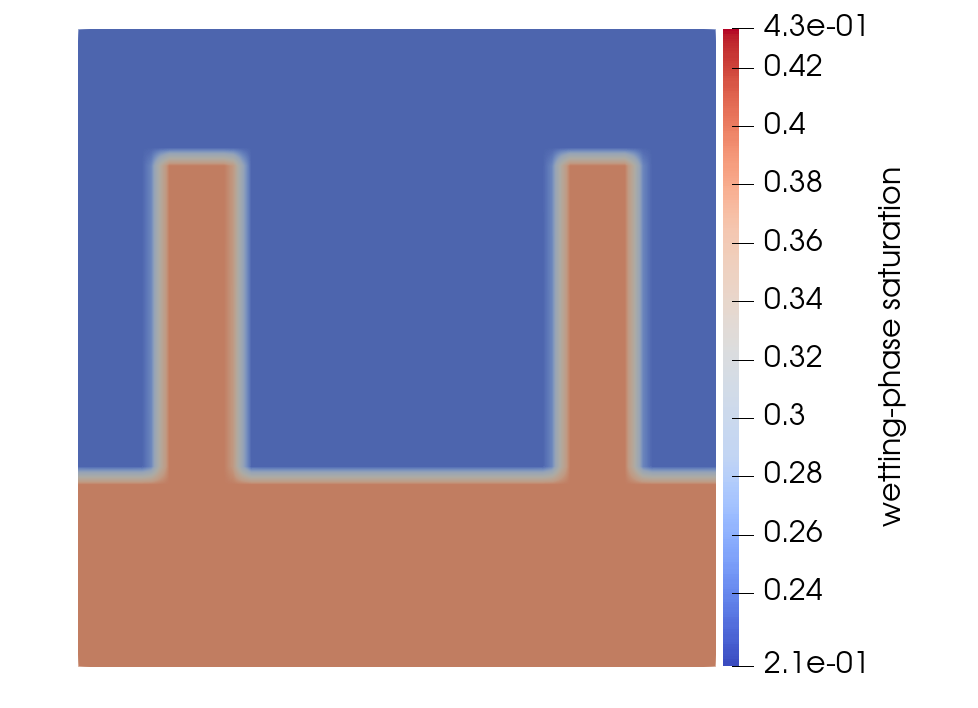}
	\caption{Distributions of wetting-phase saturation at different times in Example 3. Top-left: $t$ = 0.1 day. Top-right: $t$ = 0.2 day. Bottom-left: $t$ = 0.5 day. Bottom-right: $t$ = 2 days.}\label{fig2-Sw}
\end{figure}	

\begin{figure}[htbp]
	\centering
	\includegraphics[width=5cm]{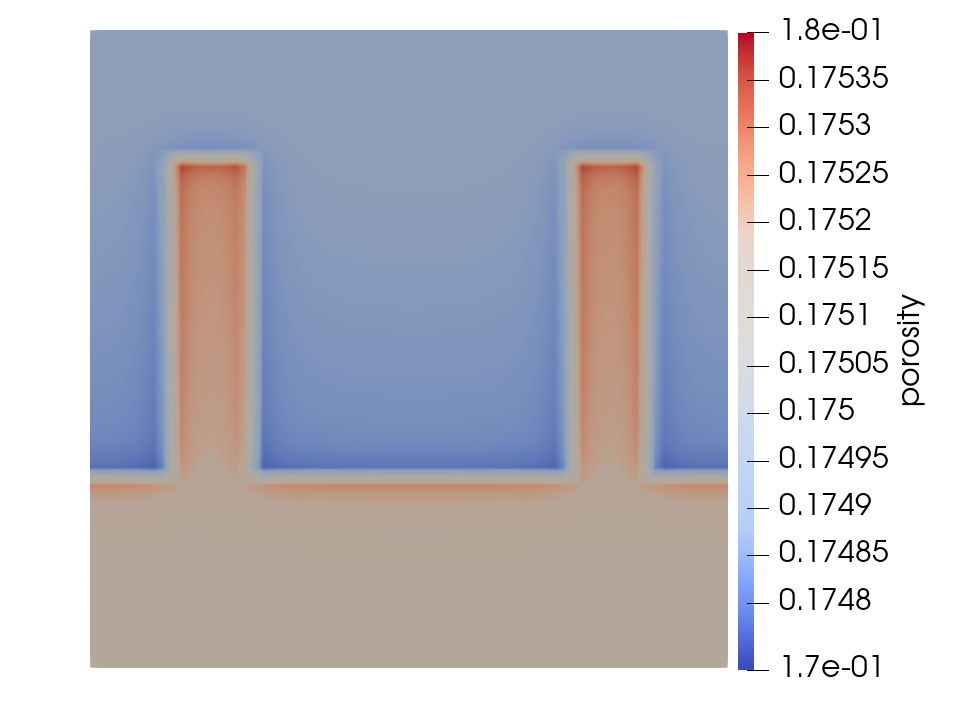}
	\includegraphics[width=5cm]{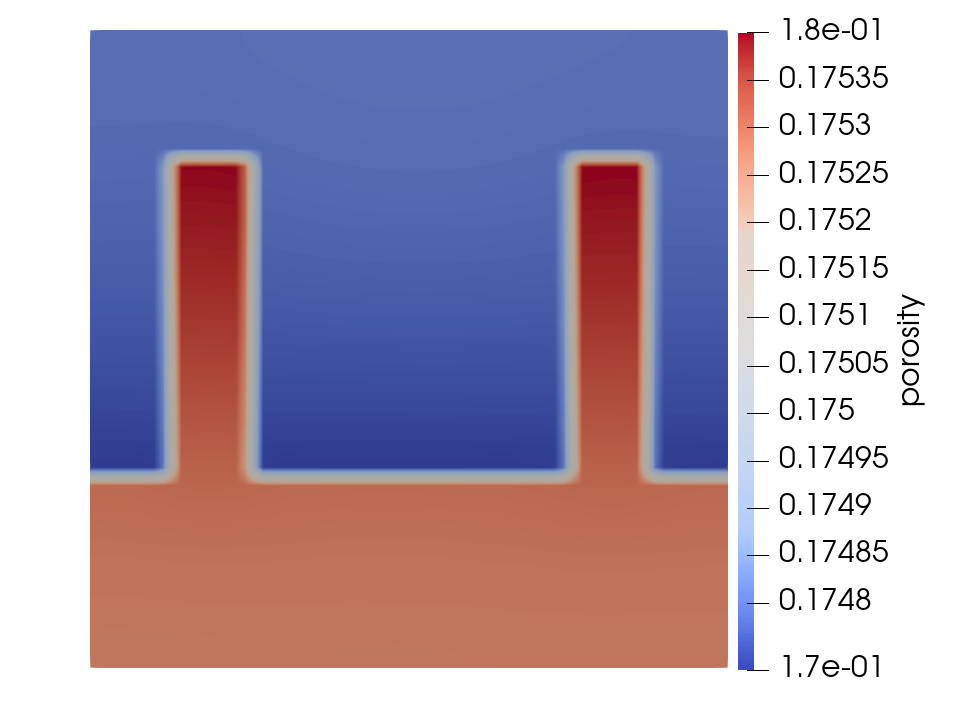}
	
	\includegraphics[width=5cm]{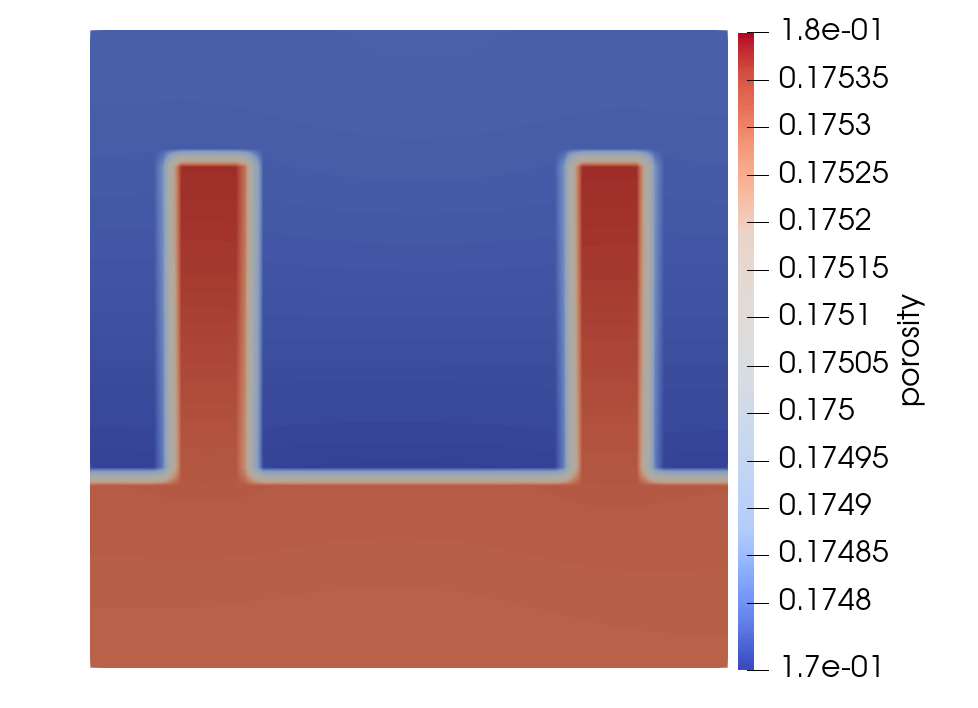}
	\includegraphics[width=5cm]{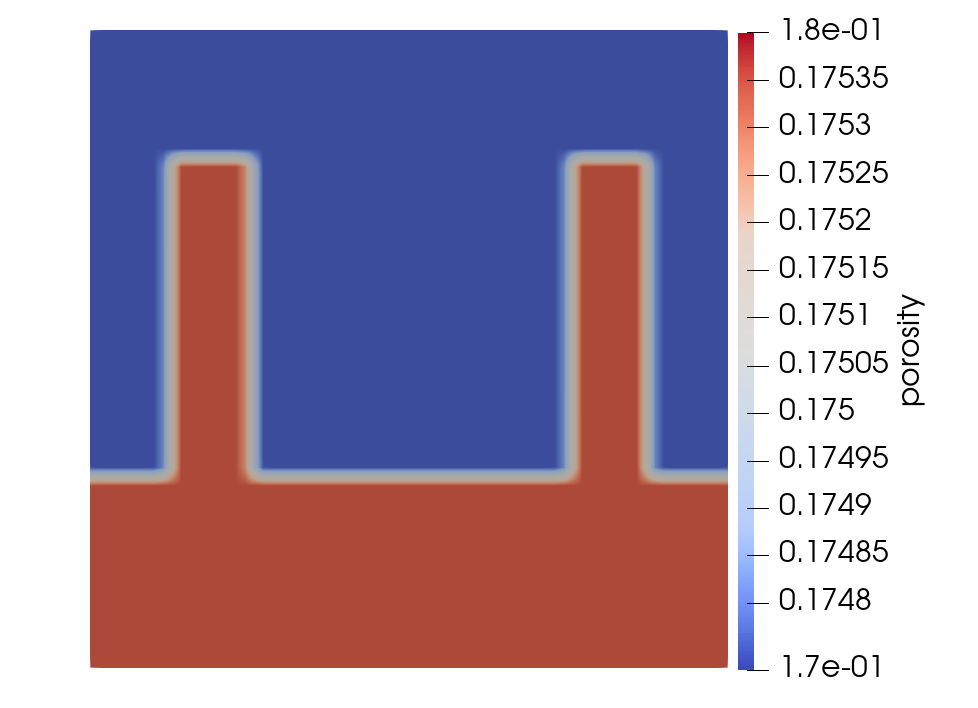}
	\caption{Distributions of porosity at different times in Example 3. Top-left: $t$ = 0.1 day. Top-right: $t$ = 0.2 day. Bottom-left: $t$ = 0.5 day. Bottom-right: $t$ = 2 days.}\label{fig2-phi}
\end{figure}

\begin{figure}[htbp]
	\centering
	\includegraphics[width=5cm]{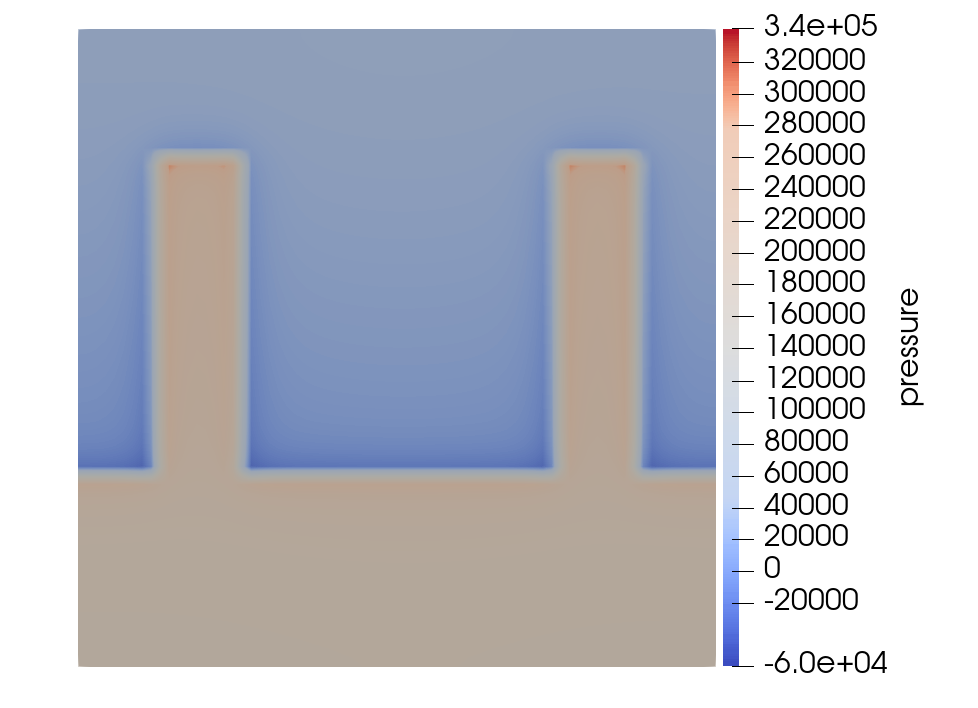}
	\includegraphics[width=5cm]{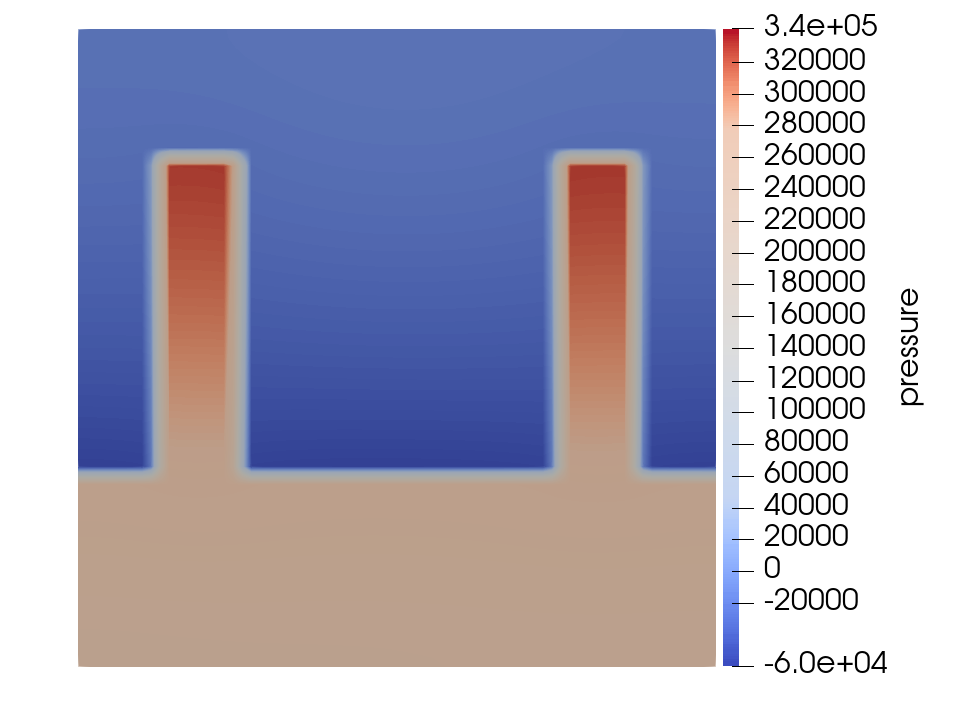}
	
	\includegraphics[width=5cm]{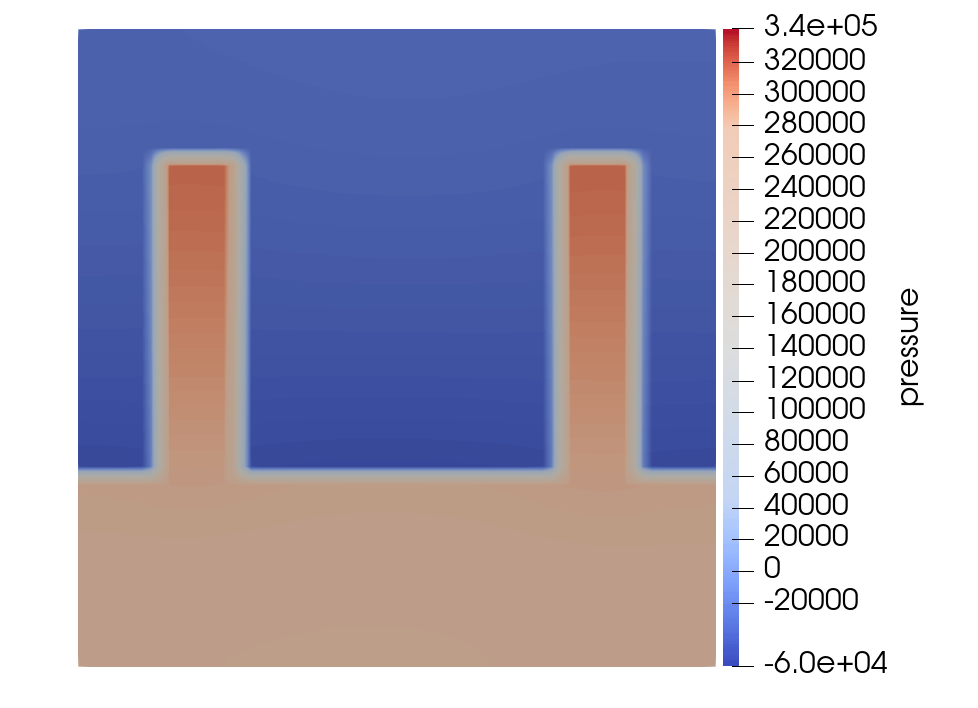}
	\includegraphics[width=5cm]{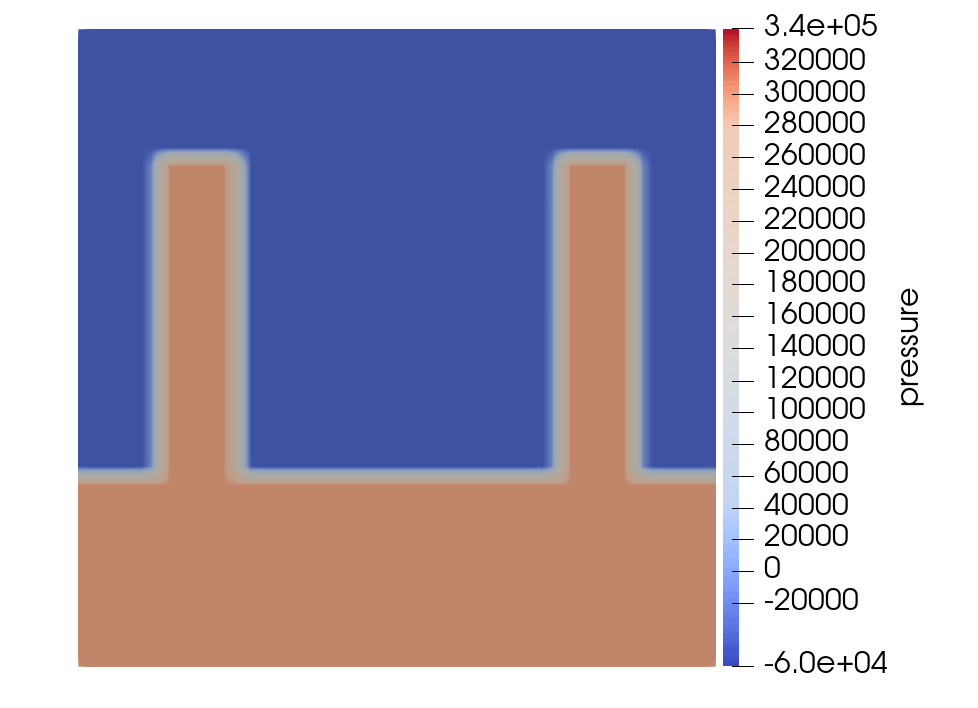}
	\caption{Distributions of effective pressure at different times in Example 3. Top-left: $t$ = 0.1 day. Top-right: $t$ = 0.2 day. Bottom-left: $t$ = 0.5 day. Bottom-right: $t$ = 2 days.}\label{fig2-pressure}
\end{figure} 

Figure \ref{fig2-energy} depicts the total free energy decreases monotonically with time until a steady state is reached. In Figure \ref{fig2-Sw}, we illustrate the wetting-phase saturation fluid flows from high-permeability regions to low-permeability regions until an equilibrium state is reached. As shown in Figure \ref{fig2-phi} and Figure \ref{fig2-pressure}, the effective pore pressure significantly increases in the low-permeability region, leading to an increase in the porosity of the original low-permeability region and a decrease in the porosity of the remaining regions. The numerical results also {agree well} with the results in \cite{KouWChenS}.

\section{Conclusion}
 In conclusion, we introduce a novel framework for designing {structure}-preserving numerical schemes applicable to a wide range of  dissipative systems. Our approach centres on leveraging the Onsager variational principle as an approximation tool. Initially,  we  show that the Onsager principle yields essential dynamic equations for both conservative and non-conservative quantities, including notable  examples such as  phase field equations (e.g., the Allen-Chan or Cahn-Hilliard equations), the Fokker-Planck equation, the PNP equation, and equations governing porous media flows, etc. Subsequently,  we illustrate how this variational principle offers a natural and unified methodology for deriving discrete-time schemes tailored to these equations. These schemes are founded upon   the minimization of the discrete Rayleighian functional. While some schemes align with some existing methods like the JKO scheme in specific scenarios. However, direct application of the JKO scheme is uncommon due to the computational challenges associated with computing Wasserstein distances. Our analysis demonstrates 
that our schemes uphold crucial system properties such as mass conservation and energy dissipation structures. Moreover, our approach allows for 
flexible spacial discretization choices. We provide numerical experiments to validate the effectiveness of our method.

\bibliographystyle{plain}
\bibliography{onsager}
\end{document}